\newcommand{\mb}[1]{\ensuremath{\mathbb{#1}}}
\newcommand{\N}{{\mb{N}}}
\newcommand{\ZZ}{{\mb{Z}}}
\newcommand{\R}{{\mb{R}}}
\newcommand{\C}{{\mb{C}}}
\newcommand{\eps}{\varepsilon}
\newcommand{\unitfunction}[1]{\bld{1}_{#1}}
\DeclareMathOperator{\Id}{Id} 
\newcommand{\D}{\ensuremath{\mathscr D}}
\newcommand{\E}{\ensuremath{\mathcal E}}
\renewcommand{\H}{\ensuremath{\mathcal H}}
\newcommand{\M}{\ensuremath{\mathcal M}}
\renewcommand{\k}{\ensuremath{\kappa}}
\newcommand{\T}{\mathsf T}
\DeclareMathOperator{\Span}{span}
\newcommand{\imp}{\Rightarrow}
\newcommand{\n}{\mathsf n}
\newcommand{\Con}{\ensuremath{\mathscr C}}
\newcommand{\Cinfc}{\ensuremath{\mathscr C^\infty_{c}}}
\renewcommand{\d}{\ensuremath{\partial}}
\newcommand{\rhs}{r.h.s.\@\xspace}
\newcommand{\lhs}{l.h.s.\@\xspace}
\newcommand{\eg}{e.g.\@\xspace}
\newcommand{\resp}{resp.\@\xspace}
\newcommand{\nhd}{neighborhood\xspace}
\newcommand{\suff}{sufficiently\xspace}
\DeclareMathOperator{\sgn}{sgn}
\DeclareMathOperator{\obs}{obs}
\DeclareMathOperator{\Obs}{\mathsf L}
\newcommand{\bld}[1]{\mbox{\boldmath $#1$}}
\newcommand{\tF}{\tilde{F}}
\newcommand{\tJ}{\tilde{J}}
\let \div \relax
\DeclareMathOperator{\div}{div}
\newcommand{\nablag}{\nabla_{\!\! g}}
\newcommand{\divg}{\div_{\! g}}
\newcommand{\hk}{h_k}
\newcommand{\Aell}{\mathsf A}
\DeclareMathOperator{\dist}{dist}
\DeclareMathOperator{\supp}{supp}
\newcommand{\inp}[2]{(#1, #2)} 
\newcommand{\biginp}[2]{\big(#1, #2 \big)}
\newcommand{\ovl}[1]{\overline{#1}}
\newcommand{\udl}[1]{\underline{#1}}
\newtheoremstyle{note}{} {}{\itshape}{-6pt}{\bf}{. --}{ }{}
\newtheorem{theorem}{Theorem}[section]
\newtheorem{lemma}[theorem]{Lemma}
\newtheorem{corollary}[theorem]{Corollary}
\newtheorem{assumption}[theorem]{Assumption}
\newtheorem*{theo*}{Theorem}
\newtheorem*{scproperty}{Semi-classical observability property}
\newcounter{theorembiss}
\newcounter{lemmabiss}
\theoremstyle{definition}
\newtheorem{remark}[theorem]{Remark}
\newtheorem{example}[theorem]{Example}
\DeclareMathOperator{\loc}{loc}
\renewcommand{\S}{\ensuremath{\mathscr S}}
\newcommand{\Norm}[2]{{\| #1 \|}_{#2}}
\newcommand{\bigNorm}[2]{{\big\| #1 \big\|}_{#2}}
\newcommand{\bt}{_{|t=0}}
\newcommand{\indj}{j}
\newcommand{\LS}{Lopatinski\u\i-\v{S}apiro\xspace}
\numberwithin{equation}{section}
\subjclass[2020]{35L05, 35L20, 93B07}
\title[semi-classical observation] 
{Semi-classical observation suffices for observability: wave and
  Schr\"odinger equations} 
\author{Nicolas Burq}
\address{Nicolas Burq. Laboratoire de Math\'ematiques d'Orsay, Universit\'e
  Paris-Sud, Universit\'e Paris-Saclay, B\^atiment~307, 91405
  Orsay Cedex \& CNRS UMR 8628 \& Institut Universitaire de France}
 \email{nicolas.burq@math.u-psud.fr}
\author{Belhassen Dehman}
  \address{Belhassen Dehman. Universit\'e de Tunis El Manar, Facult\'e
  des Sciences de Tunis, 2092 El  Manar \& Ecole Nationale d'Ing\'enieurs de Tunis, ENIT-LAMSIN, B.P. 37, 1002 Tunis, Tunisia. }
\email{Belhassen.Dehman@fst.utm.tn}
 \author{J\'er\^ome Le Rousseau}
 \address{J\'er\^ome Le Rousseau.
   Universit\'e Sorbonne Paris Nord, Laboratoire Analyse, G\'eom\'etrie et Applications, LAGA, CNRS, UMR 7539, F-93430, Villetaneuse, France.}
\email{jlr@math.univ-paris13.fr}
\date{\today}
\begin{document}
\begin{abstract}
  For the wave and the Schr\"odinger equations we show how
  observability can be deduced from the
  observability of solutions localized in  frequency according to a 
  dyadic  scale.
\end{abstract}

\maketitle
\tableofcontents
%%%%%%%%%%%%%%%%%
% section
%%%%%%%%%%%%%%%%%
\section{Waves and observability}

On a bounded smooth open set $\Omega$ of $\R^d$, consider the operator $A = - \Delta
= - \sum_{1\leq j\leq d} \d_j^2$. The associated wave equation in the
case of homogeneous Dirichlet boundary conditions is
\begin{align}
  \label{eq: wave equation-observation-intro}
  \begin{cases}
     (\d_t^2 + A)  \, u =0
     & \text{in}\ \R\times\Omega,\\
     u=0 &  \text{in}\ \R \times\d\Omega,\\
      u_{|t=0}  = \udl{u}^0, \ \d_t u_{|t=0}  = \udl{u}^1 
      & \text{in}\ \Omega.
  \end{cases}
\end{align}

\subsection{Strong and weak solutions}
For $\udl{u}^0 \in H^2(\Omega) \cap H^1_0(\Omega)$ and $\udl{u}^1\in H^1_0(\Omega)$,
there exists a unique 
\begin{align*}
    &u \in \Con^0 \big(\R;  H^2(\Omega) \cap H^1_0(\Omega) \big)
    \cap \Con^1 \big(\R;  H^1_0(\Omega)\big),
\end{align*}
solution to \eqref{eq: wave
  equation-observation-intro}. Such a solution is called a strong
solution as $ (\d_t^2 + A)  \, u =0$ holds in $L_{\loc}^2\big(\R; L^2(\Omega)\big)$.
One denotes by 
\begin{align*}
  \E_{2} (u) (t)
  &= \frac12 \big(  \Norm{A u(t)}{L^2(\Omega)}^2
    + \Norm{\d_tu(t)}{H^1_0(\Omega)}^2 \big)
\end{align*}
the (strong) energy of $u$ at time $t$. 
Since the equation \eqref{eq: wave
    equation-observation-intro} is
homogeneous this energy
is independent of time $t$ that is, 
\begin{align*}
  \E_{2} (u) (t) = \E_{2} (u) (0) =  \frac12 \big(
  \Norm{A \udl{u}^0}{L^2 (\Omega)}^2
  + \Norm{\udl{u}^1}{H^1_0(\Omega)}^2 \big).
\end{align*}
One  thus simply writes $\E_{2} (u)$. In particular this
conservation of the energy states the continuity of the map
\begin{align}
  \label{eq: continuity wave initial conditions}
  \big( H^2(\Omega) \cap H^1_0(\Omega) \big) \oplus H^1_0(\Omega)
  &\to \Con^0\big(\R; H^2(\Omega) \cap H^1_0(\Omega)\big)  \cap
                             \Con^1\big(\R; H^1_0(\Omega)\big)\\
  (\udl{u}^0, \udl{u}^1) & \mapsto u \notag.
\end{align}

For less regular intitial data one uses a notion of weak solution.
For instance, if $\udl{u}^0 \in H^1_0(\Omega)$ and $\udl{u}^1\in
L^2(\Omega)$, there exists a unique 
\begin{align*}
    &u \in \Con^0 \big(\R; H_0^1(\Omega)  \big)
    \cap \Con^1 \big(\R; L^2(\Omega)\big),
  \end{align*}
  that is a weak solution of \eqref{eq: wave
    equation-observation-intro}, meaning $ u\bt = \udl{u}^0$ and
  $\d_t u\bt = \udl{u}^1$ and $(\d_t^2 + A) u =0$ holds in
  $\D' \big(\R \times \Omega\big)$.
For such a solution one considers the following energy
\begin{align*}
  \E_{1} (u) (t)
  &= \frac12 \big(  \Norm{u(t)}{H^1_0(\Omega)}^2
    + \Norm{\d_tu(t)}{L^2(\Omega)}^2 \big)
\end{align*}
independent of time $t$ as above, that is, 
\begin{align*}
  \E_{1} (u) = \E_{1} (u) (t) = \E_{1} (u) (0) =  \frac12 \big(
  \Norm{\udl{u}^0}{H^1_0(\Omega)}^2
  + \Norm{\udl{u}^1}{L^2(\Omega)}^2 \big).
\end{align*}
With the density of $H^2(\Omega) \cap H^1_0(\Omega)$ in $H^1_0(\Omega)$ and of $H^1_0(\Omega)$
in $L^2(\Omega)$, one can approach $\udl{u}^0$ and $\udl{u}^1$ by
smoother data, and thus approach $u(t)$ by strong solutions.

\subsection{Observation operator, admissiblity and observability}
An observation operator is an operator $\Obs$ on $L^2(\Omega)$,
possibly unbounded,  with
values in a Hilbert space $K$.
Basis examples in the framework of the present introduction are the
following ones. 
\begin{example}
  \label{ex: observation operator}$\phantom{-}$
  \begin{enumerate}
  \item If $\omega$ is an open subset of $\Omega$ one can define
    $\Obs: v \mapsto \unitfunction{\omega} v$, yielding a bounded
    operator on $L^2(\Omega)$. 
  \item \label{ex: observation operator: neumann trace} If $\Gamma$ is an open set of $\d\Omega$ one can define
    $\Obs: v \mapsto \unitfunction{\Gamma} \d_\n v_{|\d\Omega}$,
    where $\n$ is the outgoing normal vector at $\d\M$,  yielding an unbounded
    operator on $L^2(\Omega)$. 
    \end{enumerate}
\end{example}

The observation operator  is said to satisfy an admissibility
condition if an estimate of the following form holds
\begin{align*}
  \int_0^S \Norm{\Obs u(t)}{K}^2 d t \leq C \E_j(u),
\end{align*}
for some $S>0$, $C>0$ and an energy level $j=1$ or $2$ (other energy
levels are considered in the abstract development in what follows).

For example, let us assume here that $j=2$, that is, admissibility is
given at the level of strong solutions. 
One says that observability holds with the operator $\Obs$ in time $T>0$ if
one has 
\begin{align*}
  \E_\ell(u) \leq C_{\obs}\int_0^T \Norm{\Obs u(t)}{K}^2 d t, 
\end{align*}
with $\ell=1$ or $2$, for some $C_{\obs}>0$ for any strong solution to
wave equation. If $\ell=1$ one says that observability holds with some loss of derivative, 
or some loss of energy, here, a loss of one energy level. 

\medskip
Observability estimates are important in applications such as inverse
problems or controllability issues. In particular, for waves,
observability is equivalent to exact controllability; see \eg
\cite{Coron:2007}. For more aspects on  admissibility, 
observability and their connections with controllability, we refer the reader to the book of M.~Tucsnak et G.~Weiss \cite{TW:09}.

\subsection{Derivation of an observability estimate}

There are various methods to derive observability estimates for the
wave equation. Some rely on a multiplier approach going back to the
seminal work of J.-L.~Lions \cite{Lions:88}. Others rely on microlocal
methods following the celebrated article of C.~Bardos, G.~Lebeau, and
J.~Rauch \cite{BLR:92}. 

The purpose of the present article is not the derivation of
observability {\em per se}. We are rather interested in showing that
observability, be it with energy loss or not, can be deduced from
the observation of very particular types of waves. The waves we
shall consider are localized in a frequency band making them easier to
handle than general waves (in particular when applying microlocal techniques). The frequency band is indexed by an integer
$k$ and ranges from $\alpha \rho^{|k|}$ to $\rho^{|k|}/\alpha$ for
$0< \alpha< 1$ and some $\rho>1$. This framework is given a
semi-classical aspect by using the small parameter
$\hk = \rho^{-|k|}$.

If $u^k$ denotes a wave localized in frequency as described above, a
very pleasant property is that $u^k$ fufills the half-wave equation
\begin{align}
  \label{eq: half wave - intro}
  (\d_t - \sgn(k) i A^{1/2}) u = 0.
\end{align}
This can greatly simplify the analysis necessary for the derivation of
an observation inequality as compared to treating all 
solutions to the wave equation. Also, the frequency  localization of
$u^k$ allows one to use powerful tools from semi-classical analysis
that are often easier to handle that the analogous tools from
microlocal analysis. The use of such tools can allow one  to treat the case of coefficients with
limited regularity; see for instance \cite{BDLR1} for this last
point. Having in mind the analysis of the HUM control operator carried
out in \cite{DL:09} the introduction of waves with frequencies limited
to a narrow band is very natural. In  \cite{DL:09}, the authors
show that the control operator acts
microlocally with a highly separated treatment of
frequency bands similar to those considered here.

\medskip
The starting point of the present article is to assume that
a uniform observability estimate holds for frequency localized waves like $u^k(t)$, that
is, for some $C_{\obs}>0$ one has 
\begin{align}
  \label{eq: semicl observability inequality-intro}
  \E_{\ell} (u^k) \leq C_{\obs}\int_0^T \Norm{\Obs u^k(t)}{K}^2\,  d
  t, 
\end{align}
for all $k$ \suff large. 
Our main result, under a unique continuation property to be described below, is the derivation from
\eqref{eq: semicl observability inequality-intro} of
the observability inequality for general waves $u(t)$ in the considered
energy level
\begin{align*}
  \E_{\ell} (u) \leq C_{\obs}' \int_0^{T'} \Norm{\Obs u(t)}{K}^2\,  d t,
\end{align*}
for any $T' >T$ and some $C_{\obs}'>0$.
We shall also show that an admissibility condition can be used to give
the proper energy level where this inequality holds. 

\medskip
To allow for a general use of this result, we present it in a general
abstract framework.

\subsection{Schr\"odinger equation}

In the same geometrical setting as above, the Schr\"odinger equation,
in the case of Dirichlet boundary conditions reads
\begin{align}
  \label{eq: schrodinger equation-observation-intro}
  \begin{cases}
     (i \d_t  + A)  \, u =0
     & \text{in}\ \R\times\Omega,\\
     u=0 &  \text{in}\ \R \times\d\Omega,\\
      u_{|t=0}  = \udl{u}^0
      & \text{in}\ \Omega.
  \end{cases}
\end{align}
For $\udl{u}^0 \in D(A) = H^2(\Omega) \cap H^1_0(\Omega)$ there exists a
unique solution in 
\begin{align*}
    &u \in \Con^0 \big(\R;  H^2(\Omega) \cap H^1_0(\Omega) \big)
    \cap \Con^1 \big(\R;  L^2(\Omega)\big),
\end{align*}
solution to \eqref{eq: wave
  equation-observation-intro} and $ (i \d_t  + A)  \, u =0$ holds in $L_{\loc}^2\big(\R; L^2(\Omega)\big)$.
In fact, the norm
\begin{align*}
  \Norm{u(t)}{D(A)} = \Norm{A u(t)}{L^2(\Omega)}, 
\end{align*}
is independant of $t$. 
As for the wave equation, other levels of regularity are possible. If
$\udl{u}^0 \in H^1_0(\Omega)$ there exists a
unique 
solution in 
\begin{align*}
    &u \in \Con^0 \big(\R;  H^1_0(\Omega) \big)
    \cap \Con^1 \big(\R;  H^{-1}(\Omega)\big),
\end{align*}
and the norm $\Norm{u(t)}{H^1_0(\Omega)}$ remains constant.
If $\udl{u}^0 \in L^2(\Omega)$ there exists a
unique 
solution in 
\begin{align*}
    &u \in \Con^0 \big(\R;  L^2(\Omega) \big)
    \cap \Con^1 \big(\R;  D(A)'\big),
\end{align*}
and the norm $\Norm{u(t)}{L^2(\Omega)}$ remains constant.

For an observation opertator as above, observability takes the form 
\begin{align*}
  \Norm{\udl{u}^0}{D(A^\ell)} \leq C_{\obs} \int_0^T \Norm{\Obs
  u}{K}^2 \, d t,
\end{align*}
here at the regularity given by $D(A^\ell)$, for $\ell=0, 1/2$
or $1$ in the above levels of solutions. 
As for the wave equation, under a unique continuation property,  we shall derive such
an observability inequality from a similar inequality holding for
solutions  localized in frequency.

\medskip The Schr\"odinger equation can be seen sometimes as a half-wave
equation; compare \eqref{eq: schrodinger equation-observation-intro}
and \eqref{eq: half wave - intro}.  With respect to the analysis we
carry out in the present paper, this comparison is very relevant and
the analysis is more involved for the wave equation.  In what follows,
we shall thus cover the wave equation first and cover the case of the
Schr\"odinger on a second pass, yet with all necessary details. 

\subsection{Other settings}
In this introductory section we have concentrated our attention on the
case of the wave and the Schr\"odinger equations stated on a bounded
smooth open set $\Omega$ of $\R^d$, along with homogeneous Dirichlet
boundary conditions, that is, $B u = 0$ with $B u= u_{|\d\Omega}$. 
This is done for the purpose of motivation. However, the abstract
framework we present in what follows allows one to consider more
general settings. We give a nonexhaustive list of such settings.
\begin{enumerate}
\item One can consider the elliptic operator $A$ to be the
  Lapace-Beltrami (up to principal part with the requirement that $A$
  be selfadjoint and nonnegative) on a smooth Riemannian manifold $\M$
  without boundary.  If viewed as an unbounded operator on $L^2(\M)$,
  one sees that $0$ is an eigenvalue associated
  with constant functions. Considering the operator acting on $L^2(\M) / \C$
  one then obtains the setting developped in what follows.
  \item On a  bounded smooth open set or on a smooth Riemannian
    manifold $\M$ with boundary, one can consider Neumann boundary
    conditions, that is, $B u = 0$ with $B u= \d_\n u_{|\d\M}$,
    with $\n$ the outgoing normal vector at $\d\M$.   The operator $A$ can be the Laplace(-Beltrami) 
    operator.  Similarly to the case without boundary, $0$
    is an eigenvalue of the elliptic operator $A$ associated with
    constant functions. The same quotient procedure yields a setting
    compatible with the analysis developped in what follows.  More
    generally, one can consider a boundary operator
    $B$ that fulfills the more general \LS
    boundary condition that encompasses both Dirichlet and Neumann
    conditions, with the requirement that the considered elliptic
    operator be selfadjoint and nonnegative; we refer for instance to \cite[Chapters 2
    and 4]{LRLR:V2}. Then, one has to consider a quotient with respect to 
    the kernel of the resulting unbounded operator if this kernel is
    not trivial.
  \item Above, the coefficients of the elliptic operator are considered
    smooth. This can be relaxed, down to Lipschitz regularity, yet
    preserving the properties needed in what follows. Similarly, the
    regularity of the open set $\Omega$ or the manifold $\M$ (and its
    boundary $\d\M$) can be
    chosen as low as $W^{2,\infty}$.
\end{enumerate}

%%%%%%%%%%%%%%%%%%
% section
%%%%%%%%%%%%%%%%%%
\section{Abstract equations and semi-classical reduction}

Let $E$ be a Hilbert space. Consider a positive unbounded selfadjoint
operator $\Aell$ on $E$ with dense domain $D(\Aell)$.  Assume that
there exists a real Hibert basis $(e_\nu)_{\nu \in \N}$ of $E$, associated
with a nondecreasing sequence of eigenvalues,
$(\lambda_\nu)_{\nu \in \N}$, with $\lambda_\nu \to +\infty$ as
$\nu \to +\infty$, for instance if $\Aell$ has a compact resolvent
map.  In the example of the introduction, one has $E= L^2(\Omega)$ and
$\Aell = - \Delta$ with $D(\Aell) = H^2(\Omega) \cap H^1_0(\Omega)$.

Any $u \in E$ reads $u = \sum_{\nu \in \N} u_\nu \, e_\nu$ with $u_\nu  = \inp{u}{e_\nu}_E$ and $(u_\nu)_\nu \in \ell^2(\C)$.  

For $s\geq 0$ one has 
\begin{align*}
  D(\Aell^s) = \{ u\in E; \ (\lambda_\nu^s u_\nu)\in \ell^2(\C)\}.
\end{align*}
For $s <0$, $D(\Aell^{s})$ denotes the dual of $D(\Aell^{|s|})$ using $E$ as a
pivot space, and if $u\in D(\Aell^{s})$ then $u = \sum_{\nu \in \N} u_\nu e_\nu$ with
convergence for  the natural dual norm on $D(\Aell^{s})$ and $\Aell^s
u = \sum_{\nu \in \N} \lambda_\nu^s u_\nu e_\nu \in E$.
In all cases, a norm on $D(\Aell^{s})$ is given by 
\begin{align*}
  \Norm{u}{D(\Aell^{s})}^2 = \Norm{\Aell^{s} u}{E}^2 
  = \bigNorm{ (\lambda_\nu^s u_\nu)_\nu}{\ell^2(\C)}^2
  = \sum_{\nu \in \N} \lambda_\nu^{2s} | u_\nu|^2,
\end{align*}
with the associated innerproduct
$\inp{u}{v}_{D(\Aell^{s})} 
= \inp{\Aell^{s} u}{\Aell^{s} v}_{E}= \biginp{(\lambda_\nu^s u_\nu)_\nu}{(\lambda_\nu^s
  v_\nu)_\nu}_{\ell^2(\C)}$.
One has the continuous and dense injection $D(\Aell^s) \hookrightarrow
D(\Aell^{s'})$ if $s \geq s'$, moreover compact if $s >s'$.
In fact, one defines $D(\Aell^\infty) = \bigcap_{s \in \R} D(\Aell^s)$. If $u = 
\sum_{\nu \in \N} u_\nu e_\nu \in D(\Aell^s)$, one sees that $U_n =
\sum_{\lambda_\nu \leq n} u_\nu e_\nu \in D(\Aell^\infty)$ and $U_n
\to u$ in $D(\Aell^s)$ as $n \to \infty$. Hence the injection
$D(\Aell^\infty)  \hookrightarrow D(\Aell^s)$ is dense for any $s \in
\R$.

\subsection{Abstract wave equation and energy levels}
\label{sec: Abstract wave equation and energy levels}
The wave equation reads
\begin{align}
  \label{eq: abstract wave equation}
  \d_t^2 u + \Aell u = 0, \qquad u\bt=\udl{u}^0, \ \d_t u\bt=\udl{u}^1.
\end{align}
With the initial conditions $\udl{u}^0 \in E = D(\Aell^0)$ and $\udl{u}^1\in D(\Aell^{-1/2})$, the unique solution to \eqref{eq: abstract wave equation}
in $\Con^0(\R; E)\cap \Con^1\big(\R; D(\Aell^{-1/2})\big)$
is given by 
\begin{align}
  \label{eq: semicl reduc wave eq solution}
  u(t) = \sum_{\nu \in \N} \Big(\cos(t \sqrt{\lambda_\nu}) \udl{u}^0_\nu
  + \frac{1}{\sqrt{\lambda_\nu}}\sin(t \sqrt{\lambda_\nu})\udl{u}^1_\nu\Big) e_\nu
  = \sum_{\nu \in \N}\big( e^{i t \sqrt{\lambda_\nu}} u_\nu^+
    + e^{-i t \sqrt{\lambda_\nu}} u_\nu^-\big) e_\nu,
\end{align}
with 
$u_\nu^\pm =(\udl{u}^0_\nu\mp i  \udl{u}^1_\nu /
\sqrt{\lambda_\nu})/2$. 
Moreover, one has $u \in \cap_k \Con^k(\R; D(\Aell^{-k/2})$.
Note
that $(u_\nu^\pm)_{\nu\in \N} \in \ell^2(\C)$.
In turn the \rhs of
\eqref{eq: semicl reduc wave eq solution} is solution to the wave
equation~\eqref{eq: abstract wave equation}
with $\udl{u}^0$ and $\udl{u}^1$ given by
\begin{align}
  \label{eq: semicl reduc wave coeff}
  \udl{u}^0_\nu=  u_\nu^+ +  u_\nu^-
  \ \ \text{and} \ \ 
 \udl{u}^1_\nu =  i \sqrt{\lambda_\nu} (u_\nu^+ -  u_\nu^-).
\end{align}
Note that $u \in L^2_{\loc} (\R; E)\cap H^1_{\loc}\big(\R; D(\Aell^{-1/2})\big)\cap H^2_{\loc}\big(\R; D(\Aell^{-1})\big)$ and the equation in \eqref{eq: abstract wave equation} is
fulfilled in $L_{\loc}^2\big(\R; D(\Aell^{-1})\big)$.
The energy of the solution is given by
\begin{align*}
    \E_0(u)(t) = \frac12 \big( \Norm{u(t)}{E}^2 +
\Norm{\d_tu(t)}{D(\Aell^{-1/2})}^2\big)
  = \frac12 \big( \Norm{u(t)}{E}^2 +
\Norm{\Aell^{-1/2} \d_tu(t)}{E}^2\big).
\end{align*}
 It is constant with respect to $t$,
that is,
\begin{align}
  \label{eq: semicl reduc energy 1}
  \E_0(u)(t)
  &= \E_0(u)(0)
    = \frac12 \big( \Norm{\udl{u}^0}{E}^2 
    + \Norm{\udl{u}^1}{D(\Aell^{-1/2})}^2\big)
    = \frac12\sum_{\nu \in \N}
    \big( |\udl{u}^0_\nu|^2 +\lambda_\nu^{-1}|\udl{u}^1_\nu|^2\big)\\
  &=\sum_{\nu \in \N}  \big(
    | u_\nu^+|^2 +  | u_\nu^-|^2 
    \big) .
    \notag
\end{align}
We thus simply write $\E_0(u)$ and one has
\begin{align}
  \label{eq: semicl reduc energy 2}
  \E_0(u) 
  =\frac12 (t_2-t_1)^{-1}\int_{t_1}^{t_2} \big( \Norm{u(t)}{E}^2 +
\Norm{\d_tu(t)}{D(\Aell^{-1/2})}^2\big) \, d t,
\end{align}
for any time inteval $[t_1,t_2]$, leading to a well defined energy if
only considering the solution $u$ in $L^2_{\loc} (\R;E)\cap H^1_{\loc}\big(\R; D(\Aell^{1/2})\big)\cap H^2_{\loc}\big(\R; D(\Aell^{-1})\big)$.

More generally, if $s \in \R$ and $\udl{u}^0 \in D(\Aell^{s/2})$ and $\udl{u}^1\in D(\Aell^{(s-1)/2})$, the unique solution to \eqref{eq: abstract wave equation}
in $\cap_k \Con^k(\R; D(\Aell^{(s-k)/2})$
is given by \eqref{eq: semicl reduc wave eq solution}, and one can
define the energy
\begin{align*}
    \E_{s}(u)(t) 
  = \frac12 \big( \Norm{u(t)}{D(\Aell^{s/2})}^2 +
  \Norm{\d_tu(t)}{D(\Aell^{(s-1)/2})}^2\big)
  = \frac12 \big( \Norm{\Aell^{s/2}  u(t)}{E}^2 +
  \Norm{\Aell^{(s-1)/2} \d_tu(t)}{E}^2\big),
\end{align*}
that is also constant with respect to $t$.
Note that if $u(t)$ is such a solution then $\Aell^{s/2} u(t)$ is a
solution  to \eqref{eq: abstract wave equation} in $\cap_k \Con^k\big(\R; D(\Aell^{-k/2})\big)$ as above,
with 
\begin{align}
  \label{eq: semicl reduc energy 3}
  \E_{s}(u) = \E_0 \big(\Aell^{s/2} u\big)
  =\sum_{\nu \in \N}  \lambda_\nu^{s}\big(
    | u_\nu^+|^2 +  | u_\nu^-|^2 \big) .
\end{align}
We shall say that such a solution to the wave equation lies in the $s$-energy
level. Similarly to \eqref{eq: semicl reduc energy 2} one has 
\begin{align}
  \label{eq: semicl reduc energy 4}
  \E_{s}(u) 
  =\frac12 (t_2-t_1)^{-1}\int_{t_1}^{t_2} \big( \Norm{u(t)}{D(\Aell^{s/2})}^2 +
\Norm{\d_tu(t)}{D(\Aell^{(s-1)/2})}^2\big) \, d t,
\end{align}
for any time inteval $[t_1,t_2]$.

\medskip
If $\udl{u}^0 , \udl{u}^1\in D(\Aell^\infty)$ the associated solution
$u(t)$ is such that $u \in \Con^k(\R; \Aell^s)$ for any $k \in \N$ and
$s \in \R$. One has $\E_s(u)< \infty$ and one says that $u(t)$ lies in
all energy levels.

\medskip
If $\udl{u}^0 \in D(\Aell^{\ell/2})$ and $\udl{u}^1\in
D(\Aell^{(\ell-1)/2})$ and if one denotes by $u(t)$ the unique
solution to the wave equation \eqref{eq: abstract wave
  equation} that lies in the $\ell$-energy level, 
there exists a sequence $u_n(t)$ of solutions that lie in all
energy levels and such that
\begin{align}
  \label{eq: density smooth solutions}
  \E_\ell (u - u_n) \to 0 \qquad \text{as} \ n \to +\infty,
\end{align}
from the density of $D(\Aell^\infty)$ in $D(\Aell^s)$ for any $s\in
\R$. It suffices to consider two sequences $(\udl{u}_n^0)_n$ and
$(\udl{u}_n^1)_n$ both in  $D(\Aell^{\infty})$ such that $\Norm{\udl{u}^0 - \udl{u}^0_n}{D(\Aell^{\ell/2})} \to 0$
and $\Norm{\udl{u}^1 - \udl{u}^1_n}{D(\Aell^{(\ell-1)/2})} \to 0$ and
let $u_n(t)$ be the associated solution to the wave equation.

\subsection{Dyadic decomposition for waves}

Let $ 0<\alpha<1$,  $ \varrho\in ]1, 1/\alpha[$ and set  
\begin{align*}
  J_k= \{\nu;\ \alpha \varrho^{|k|} 
  \leq {\sqrt {\lambda_\nu}}< \varrho^{|k|}/\alpha \}, \qquad k \in \ZZ^*.
\end{align*}
Note that $\# J_k < \infty$ from the assumed properties of the
eigenvalues. 
 Set also $\hk = \varrho^{-|k|}$. Introduce 
\begin{align*}
  E_k=\Span \{  e_\nu; \ \nu\in J_k \},
\end{align*}
equipped with the norm  $\Norm{u}{E} = \sum_{\nu\in J_k} | u_\nu
|^2$ for $u = \sum_{\nu \in J_k} u_\nu e_\nu \in E_k$.
Observe that if $u \in E_k$ then $\Aell^n u \in E_k$, using that $\# J_k < \infty$. Hence, $E_k$ is a
subspace of $D(\Aell^\infty)$.

At this stage it is important to  note that $J_{-k} = J_k$ implying $E_{-k} = E_k$. 
However, we shall identify $u \in E_k$ 
with the  following solution of the wave equation
\begin{align}
\label{eq: wave Ek}
 u=\sum _{\nu \in J_k} 
  e^{\sgn(k) i t{\sqrt {\lambda _\nu}}} u_\nu e_\nu.
\end{align}
The sign of $k$ here becomes important. 
Yet, note that $u \in E_k$ if
and only if $\bar{u} \in E_{-k}$ through this identification since
the eigenfunctions $e_\nu$ are assumed real. 

Following up, we identify $\d_t^\ell u$ with $u = \sum_{\nu \in J_k}
\big(i \sgn(k)\big)^\ell
\lambda_\nu^{\ell/2}  u_\nu e_\nu \in E_k$, that is, its value at $t=0$.
Similarly, one identifies $\Aell^s u$  with $ \sum_{\nu \in J_k} 
\lambda_\nu^{s}  u_\nu e_\nu \in E_k$.
%%%%%%%%%%%%%%%%%%%%%%%%
% lemma                %
%%%%%%%%%%%%%%%%%%%%%%%%
\begin{lemma}
  \label{lemma: norm dt uk}
  For $u \in E_k$, the norms 
  \begin{align*} 
   \hk^{2s+r}   \Norm{\d_t^r  \Aell^s u}{E}, \quad r \in \N, \
    s\in \R, 
  \end{align*}
are equivalent to $\Norm{u}{E}$, uniformly with
  respect to $k\in \ZZ^*$.
\end{lemma}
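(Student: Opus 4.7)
The plan is to do a direct eigenfunction computation: since $u \in E_k$ consists of a finite sum over the index set $J_k$, and both $\d_t^r$ (via the identification \eqref{eq: wave Ek}) and $\Aell^s$ act diagonally in the basis $(e_\nu)_{\nu\in J_k}$, everything reduces to comparing the weight $\lambda_\nu^{r+2s}$ with $\hk^{-2(r+2s)}$ on $\nu \in J_k$.

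More concretely, for $u = \sum_{\nu \in J_k} u_\nu e_\nu \in E_k$, the identifications spelled out above give
\begin{align*}
  \d_t^r \Aell^s u = \sum_{\nu \in J_k} \bigl(i\sgn(k)\bigr)^r \lambda_\nu^{r/2+s}\, u_\nu\, e_\nu,
\end{align*}
so by orthonormality of $(e_\nu)$,
\begin{align*}
  \Norm{\d_t^r \Aell^s u}{E}^2 = \sum_{\nu \in J_k} \lambda_\nu^{r+2s}\, |u_\nu|^2.
\end{align*}
The definition of $J_k$ together with $\hk = \varrho^{-|k|}$ gives $\alpha \hk^{-1} \leq \sqrt{\lambda_\nu} < \alpha^{-1} \hk^{-1}$, hence $\alpha^2 \hk^{-2} \leq \lambda_\nu < \alpha^{-2} \hk^{-2}$ for every $\nu \in J_k$.

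Raising to the power $r+2s$ (and handling the two cases $r+2s \geq 0$ and $r+2s < 0$ separately so as to track which inequality flips), one obtains a constant $C = C(\alpha, r, s) \geq 1$, independent of $k$, such that
\begin{align*}
  C^{-1} \hk^{-2(r+2s)} \leq \lambda_\nu^{r+2s} \leq C\, \hk^{-2(r+2s)}, \qquad \nu \in J_k.
\end{align*}
Summing in $\nu$ against $|u_\nu|^2$ and multiplying through by $\hk^{2(2s+r)}$ then yields
\begin{align*}
  C^{-1} \Norm{u}{E}^2 \leq \hk^{2(2s+r)} \Norm{\d_t^r \Aell^s u}{E}^2 \leq C \Norm{u}{E}^2,
\end{align*}
which is the claimed equivalence of norms with constants uniform in $k \in \ZZ^*$.

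The only mildly delicate point is the bookkeeping of the sign of the exponent $r+2s$ when converting the two-sided bound on $\sqrt{\lambda_\nu}$ into one on $\lambda_\nu^{r+2s}$; beyond that, the statement is essentially a rewriting of the definition of $J_k$ and $\hk$, so no genuine obstacle is expected.
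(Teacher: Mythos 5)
Your proof is correct and follows essentially the same route as the paper: both compute $\hk^{2(2s+r)}\Norm{\d_t^r\Aell^s u}{E}^2 = \sum_{\nu\in J_k}(\hk^2\lambda_\nu)^{2s+r}|u_\nu|^2$ and conclude from $\alpha^2 \leq \hk^2\lambda_\nu < \alpha^{-2}$ on $J_k$. You have merely spelled out the sign-of-exponent bookkeeping that the paper leaves implicit in the notation $\hk^2\lambda_\nu \eqsim 1$.
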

%% proof of lemma
\begin{proof}
  One writes
  \begin{align*} 
    \hk^{2(2s + r)} \Norm{\d_t^r  \Aell^s u}{E}^2 = 
    \sum_{\nu \in J_k} |\hk^2 \lambda_\nu|^{2s + r}\, 
    |u_\nu|^2 
    \eqsim 
    \sum_{\nu \in J_k}   |u_\nu|^2 
    =  \Norm{u}{E}^2, 
\end{align*}
as $\hk^2 \lambda_\nu\eqsim 1$ for $\nu \in J_{k}$.
\end{proof}
For $u \in E_k$, the identified solution to the wave equation given
in \eqref{eq: wave Ek} lies in all energy
level. One has 
\begin{align*}
  \E_{s}(u) = \frac12 \big( \Norm{\Aell^{s/2}  u(t)}{E}^2 +
  \Norm{\Aell^{(s-1)/2} \d_tu(t)}{E}^2\big)
  \eqsim \hk^{2s}   \Norm{u}{E}^2. 
\end{align*}
In particular, note that for $u \in E_k$ both terms in the
energy coincide; this is not the case in general for a
solution of the wave equation for fixed time (while it is true in time average). The reason is that $u \in E_k$ is in
fact solution to the following half-wave equation
\begin{align*}
  (\d_t - \sgn(k) i \Aell^{1/2}) u = 0.
\end{align*}

\bigskip
We introduce the
following sets of sequences of functions 
\begin{align} 
\label{eq: space B}
&B= \big\{ (u^k)_{k \in {\ZZ}^*};\ u^k \in
E_k  \ \text{and} \  \Norm{u^k}{L^2(\Omega)}  \leq 1 \big\}, \\
&B^\pm= \big\{ (u^k)_{k \in \pm{\N}^*};\  u^k \in
E_k \ \text{and} \  \Norm{u^k}{L^2(\Omega)}  \leq 1 \big\}.\notag
\end{align}

\subsection{Abstract Schr\"odinger equation and dyadic decomposition}
The Schr\"odinger equation associated with the operator $\Aell$ reads
\begin{align}
  \label{eq: abstract Schrodinger equation}
  \d_t u -i \Aell u = 0, \qquad u\bt=\udl{u}^0.
\end{align}
With the initial conditions $\udl{u}^0 \in D(\Aell^p)$, for some $p\in
\R$, the unique solution to \eqref{eq: abstract Schrodinger equation}
in $\Con^0\big(\R;  D(\Aell^p)\big)\cap \Con^1\big(\R; D(\Aell^{p-1})\big)$
is given by 
\begin{align}
  \label{eq: semicl reduc Schrodinger eq solution}
  u(t) = \sum_{\nu \in \N} e^{i t \lambda_\nu} \udl{u}^0_\nu\ e_\nu.
\end{align}
One has 
\begin{align*}
  \Norm{u(t)}{D(\Aell^p)}^2  
  =\sum_{\nu \in \N}   \lambda_\nu^{2 p} |\udl{u}^0_\nu|^2
  = \Norm{\udl{u}^0}{D(\Aell^p)}^2 .
\end{align*}
Moreover, one has $u \in \cap_k \Con^k\big(\R;  D(\Aell^{p-k})\big)$. 

\bigskip
As above let  $ 0<\alpha<1$,  $ \varrho\in ]1, 1/\alpha[$ and set
$\hk = \varrho^{-|k|}$ and 
\begin{align*}
  J^S_k= \{\nu;\ \alpha \varrho^{|k|} 
  \leq \lambda_\nu< \varrho^{|k|}/\alpha \}, \qquad k \in \N^*.
\end{align*}
Note that $\# J^S_k < \infty$ from the assumed properties of the
eigenvalues. 
Introduce 
\begin{align*}
  E^S_k=\Span \{  e_\nu; \ \nu\in J^S_k \},
\end{align*}
equipped with the norm  $\Norm{u}{E}^2 = \sum_{\nu\in J^S_k} | u_\nu
|^2$ for $u = \sum_{\nu \in J^S_k} u_\nu e_\nu \in E^S_k$.
Note that $E^S_k$ is a
subspace of $D(\Aell^\infty)$.

We shall identify $u \in E_k$ 
with the  following solution to the Schr\"odinger  equation
\begin{align}
\label{eq: Schrodinger Ek}
 u=\sum _{\nu \in J^S_k} 
  e^{i t \lambda _\nu} u_\nu e_\nu.
\end{align}

The counterpart to Lemma~\ref{lemma: norm dt uk} is the following lemma.
%%%%%%%%%%%%%%%%%%%%%%%%
% lemma                %
%%%%%%%%%%%%%%%%%%%%%%%%
\begin{lemma}
  \label{lemma: norm dt uk Schrodinger}
  For $u \in E^S_k$, and $r \in \N$ and $s\in \R$  the norm
  \begin{align*} 
   \hk^{s+r}   \Norm{\d_t^r  \Aell^s u}{E}, \quad 
  \end{align*}
  is equivalent to $\Norm{u}{E}$, uniformly with respect to $k\in \N^*$.
\end{lemma}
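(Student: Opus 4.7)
The plan is to mirror the proof of Lemma~\ref{lemma: norm dt uk} for the wave case, adapting the spectral scaling to the Schrödinger identification. First, I would use the identification \eqref{eq: Schrodinger Ek}, namely $u = \sum_{\nu \in J^S_k} e^{it\lambda_\nu} u_\nu e_\nu$. Differentiating $r$ times in $t$ at $t=0$ produces a factor $(i\lambda_\nu)^r$ on each mode, while applying $\Aell^s$ (understood in the sense introduced just above the lemma) multiplies the $\nu$-th coefficient by $\lambda_\nu^s$. Hence, at $t=0$,
\begin{align*}
  \d_t^r \Aell^s u = \sum_{\nu \in J^S_k} i^r \lambda_\nu^{r+s} u_\nu\, e_\nu,
\end{align*}
which belongs to $E^S_k \subset D(\Aell^\infty)$ since $\# J^S_k < \infty$.

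Next, I would compute the $E$-norm directly from the orthonormality of the $e_\nu$:
\begin{align*}
  \hk^{2(s+r)} \Norm{\d_t^r \Aell^s u}{E}^2
  = \sum_{\nu \in J^S_k} (\hk \lambda_\nu)^{2(s+r)} |u_\nu|^2.
\end{align*}
The crucial point is the definition of $J^S_k$: for $\nu\in J^S_k$ one has $\alpha \varrho^{|k|} \leq \lambda_\nu < \varrho^{|k|}/\alpha$, which, recalling $\hk = \varrho^{-|k|}$, gives $\hk \lambda_\nu \eqsim 1$ with constants depending only on $\alpha$ and $\varrho$, and hence uniform in $k$. Substituting this equivalence yields
\begin{align*}
  \hk^{2(s+r)} \Norm{\d_t^r \Aell^s u}{E}^2 \eqsim \sum_{\nu \in J^S_k} |u_\nu|^2 = \Norm{u}{E}^2,
\end{align*}
which is the claimed equivalence.

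The only structural difference with the proof of Lemma~\ref{lemma: norm dt uk} is that here the symbol of the generator is $\lambda_\nu$ (not $\sqrt{\lambda_\nu}$), since the Schrödinger equation is already a first-order evolution. This is exactly why the homogeneity exponent drops from $2s+r$ in the wave case to $s+r$ here: each $\d_t$ and each $\Aell$ contributes the same single power of $\lambda_\nu \eqsim \hk^{-1}$. There is no real obstacle; the entire argument reduces to the two-sided bound $\hk \lambda_\nu \eqsim 1$ on $J^S_k$ built into the dyadic decomposition.
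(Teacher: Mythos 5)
Your proof is correct and is precisely the adaptation the paper intends; the paper omits the argument, presenting Lemma~\ref{lemma: norm dt uk Schrodinger} merely as ``the counterpart'' of Lemma~\ref{lemma: norm dt uk}, and your computation reproduces that proof with the correct scaling adjustments, namely $\hk\lambda_\nu\eqsim1$ on $J^S_k$ in place of $\hk^2\lambda_\nu\eqsim1$ on $J_k$, so that each $\d_t$ and each $\Aell$ both contribute a single power of $\lambda_\nu$.
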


\bigskip
We introduce the
following set of sequences of functions 
\begin{align} 
\label{eq: space B Schrodinger}
&B^S= \big\{ (u^k)_{k \in {\N}^*};\ u^k \in
E^S_k  \ \text{and} \  \Norm{u^k}{L^2(\Omega)}  \leq 1 \big\}.
\end{align}
If $\udl{u}^0\in D(\Aell^\infty)$ the associated solution
$u(t)$ is such that $u \in \Con^k(\R; \Aell^s)$ for any $k \in \N$ and
$s \in \R$. If $\udl{u}^0 \in D(\Aell^{p})$, denote by $u(t)$ the unique
solution to the Schr\"odinger equation \eqref{eq: abstract Schrodinger
  equation} that lies in $\Con^0\big(\R; D(\Aell^{p})\big)$. From the
density of $D(\Aell^\infty)$ in $D(\Aell^s)$ for any $s\in \R$ one can
consider
a sequence $(\udl{u}_n^0)_n \subset D(\Aell^{\infty})$ such that
$\Norm{\udl{u}^0 - \udl{u}^0_n}{D(\Aell^{p})} \to 0$. The associated
solutions $u_n(t)$ to the Schr\"odinger equation are such that 
\begin{align}
    \label{eq: density smooth solutions - Schrodinger}
  \sup_{t \in \R} \Norm{u(t) - u_n(t)}{D(\Aell^{p})} =\Norm{\udl{u}^0 - \udl{u}^0_n}{D(\Aell^{p})} \ \to 0 \qquad \text{as} \ n \to +\infty.
\end{align}

%%%%%%%%%%%%%%%%%%
% section
%%%%%%%%%%%%%%%%%%
\section{Main results}

\subsection{Observation operator and unique continuation assumption}

For some Hilbert space $K$ consider an observation operator
$\Obs: E \to K$, possibly unbounded, with domain given by $D(\Obs) = D(\Aell^{m_0})$ for
some $m_0\in \R$, with 
\begin{align}
  \label{eq: bound obs op}
  \Norm{\Obs u}{K}\leq C_0 \Norm{u}{D(\Aell^{m_0})}.
\end{align}

 We introduce the following
assumption.
\begin{assumption}[unique continuation]
  \label{assumpt: unique continuation}
  If $u$ is an eigenvector of  $\Aell$
  such that $\Obs (u) =0$,  then $u =0$. 
\end{assumption}
Observe that an eigenvector of $\Aell$ lies in $D(\Aell^\infty)$ and thus lies in $D(\Obs)$.

\subsection{From semi-classical observation to observability for waves}

Our starting point will be  the following 
property.
%%%%%%%%%%%%%
% semi-classical observability property
%%%%%%%%%%%%%
\begin{scproperty}[wave equation]
For some  $\ell_1 \in \R$,  $C>0$, $k^0 \in \N$ and some $T>0$
one has 
\begin{equation}
  \label{eq: semicl obs}
  \E_{\ell_1}(u^k)
  \leq C \int _0^{T} \Norm{\Obs u^k(t)}{K}^2dt,
  \qquad (u^k)_{k\in \ZZ} \in B, \ |k| \geq k_0.
\end{equation}
\end{scproperty}

\medskip
Our main result in the case of the wave equation is the following theorem.
%%%%%%%%%%%%%%%%%%%%%%%%
% theorem       %
%%%%%%%%%%%%%%%%%%%%%%%%
\begin{theorem}
  \label{theorem: main result - wave} 
  Let $\ell_1 \in \R$ with $\ell_1 \leq 2 m_0$.  
  Assume that there exists $C>0$, $ k_0>0$, and $T>0$ such that \eqref{eq: semicl obs}
  holds for any $U= (u^k)_{k\in \ZZ}\in B$ and  any $|k|\geq k_0$.
Under
the unique continuation 
Assumption~\ref{assumpt: unique continuation}, for any $T'>T$ there exists $ C'>0$ such
that for any $(\udl{u}^0, \udl{u}^1) \in D(\Aell^{m_0})
\times D(\Aell^{m_0-1/2})$ the solution to~\eqref{eq: abstract wave
  equation} given by \eqref{eq: semicl reduc wave eq solution} satisfies 
\begin{equation}
   \label{eq: observation wave}
\E_{\ell_1}(u)
  \leq C' \int_{0}^{T'} \Norm{\Obs u(t)}{K}^2\, d t. 
\end{equation}
\end{theorem}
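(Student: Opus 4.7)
I would prove Theorem~\ref{theorem: main result - wave} in two steps: (i) derive an observability estimate modulo a compact low-frequency remainder from the semi-classical hypothesis, then (ii) remove that remainder by a compactness-uniqueness argument based on Assumption~\ref{assumpt: unique continuation}.

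For step~(i), decompose $u$ via \eqref{eq: semicl reduc wave eq solution} along the full dyadic scale: write $u = u^{\mathrm{low}} + \sum_{|k| \geq k_0} u^k$, where $u^k \in E_k$ is the half-wave component \eqref{eq: wave Ek} in the band indexed by $k \in \ZZ^*$ (positive for positive time-frequency, negative for negative time-frequency), and $u^{\mathrm{low}}$ lies in the finite-dimensional low-frequency space $V_{\mathrm{low}} = \Span\{e_\nu : \sqrt{\lambda_\nu} < \alpha \varrho^{k_0}\}$. By homogeneity, the hypothesis \eqref{eq: semicl obs} applies to each band, and summing the resulting bounds on the left reproduces $\E_{\ell_1}$ of the high-frequency part of $u$ via \eqref{eq: semicl reduc energy 3}. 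The key technical step is the almost-orthogonality estimate
\begin{equation}
\label{eq: plan AO}
  \sum_{|k|\geq k_0} \int_0^T \Norm{\Obs u^k(t)}{K}^2\,dt \leq C \int_0^{T'} \Norm{\Obs u(t)}{K}^2\,dt + C \Norm{\Pi_{\mathrm{low}} u}{D(\Aell^{m_0})}^2.
\end{equation}
Its proof exploits the identity $\chi_k(\sqrt{\Aell}) u(t) = \chi_k(\sgn(k) D_t) u(t)$, valid on half-wave solutions, so that --- since $\Obs$ is time-independent --- it commutes with the Fourier multiplier $\chi_k(\sgn(k) D_t)$. Choosing smooth cutoffs $\chi_k$ with $\chi_k \equiv 1$ on the band $J_k$ and bounded overlap $\sum_k \chi_k^2 \leq C$, together with a smooth time weight $\theta \in C_c^\infty([0, T'])$ equal to one on a translate of $[0,T]$ of length $T$, Plancherel in time and the bounded overlap yield the main contribution $C \int \theta^2 \Norm{\Obs u}{K}^2\,dt$. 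The commutators $[\theta, \chi_k(\sgn(k) D_t)]$ have convolution kernels concentrated at scale $\hk$; summed over $k$ they produce a remainder controlled by $\Norm{\Pi_{\mathrm{low}} u}{D(\Aell^{m_0})}$ --- the condition $\ell_1 \leq 2 m_0$ entering precisely to express this remainder in the compact form displayed. Combining yields
\begin{equation}
\label{eq: plan intermediate}
  \E_{\ell_1}(u) \leq C \int_0^{T'} \Norm{\Obs u(t)}{K}^2\,dt + C \Norm{\Pi_{\mathrm{low}} u}{D(\Aell^{m_0})}^2,
\end{equation}
with $\Pi_{\mathrm{low}}$ the spectral projector onto $V_{\mathrm{low}}$.

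For step~(ii), assume by contradiction that \eqref{eq: observation wave} fails: there is a sequence $(\udl{u}_n^0, \udl{u}_n^1) \in D(\Aell^{m_0}) \times D(\Aell^{m_0-1/2})$ with corresponding solutions $u_n$ satisfying $\E_{\ell_1}(u_n) = 1$ and $\int_0^{T'}\Norm{\Obs u_n}{K}^2\,dt \to 0$. Inequality \eqref{eq: plan intermediate} forces $\Norm{\Pi_{\mathrm{low}} u_n}{D(\Aell^{m_0})} \geq c > 0$ eventually. Since $V_{\mathrm{low}}$ is finite dimensional, a subsequence of $\Pi_{\mathrm{low}} u_n$ converges strongly to a nonzero limit; Banach-Alaoglu then extracts a weakly convergent subsequence $u_n \rightharpoonup u \neq 0$ in the $\ell_1$-energy space. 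The limit is a solution of the wave equation with $\Obs u(t) = 0$ for $t \in [0, T']$. Expanding $u$ on the eigenbasis as in \eqref{eq: semicl reduc wave eq solution} and using the linear independence on any interval of the exponentials $\{e^{\pm i t \sqrt{\lambda_\nu}}\}_\nu$ for distinct $\lambda_\nu$, the vanishing of $\Obs u$ on $[0, T']$ forces $\Obs$ to annihilate the spectral projection $\pi_\mu u$ on each eigenspace $\ker(\Aell - \mu)$. Since each such $\pi_\mu u$ is an eigenvector of $\Aell$, Assumption~\ref{assumpt: unique continuation} yields $\pi_\mu u = 0$ for every $\mu$, whence $u = 0$, contradicting $\Pi_{\mathrm{low}} u \neq 0$.

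The main obstacle is proving \eqref{eq: plan AO}: the delicate point is obtaining a remainder that depends only on a fixed finite-dimensional projection of $u$ --- so that the compactness-uniqueness step can be applied --- rather than on the full $D(\Aell^{m_0})$ norm of $u$. This demands careful tracking of the commutator $[\theta, \chi_k(\sgn(k) D_t)]$, exploiting the $\hk$-concentration of $\hat\chi_k$ together with the dyadic structure of the cutoffs, so that only the truly low-frequency contributions survive as a genuine compact remainder after summation.
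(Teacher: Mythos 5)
The high-level architecture you propose --- semi-classical observability on each dyadic band, an almost-orthogonality estimate in time to glue the bands back together, and a compactness-uniqueness argument to remove the finite-dimensional remainder --- coincides with the paper's strategy. Your step~(ii) is also in spirit the paper's Lemma~\ref{lemma: invisible solution}, but you should first establish finite-dimensionality of the invisible-solution space (it follows from the intermediate inequality, since the $\E_{\ell_1}$-norm is then dominated on that space by a finite-rank seminorm) and only afterwards invoke "linear independence of exponentials": that property is only valid for finite linear combinations, and the limit $u$ is a priori an infinite eigenfunction expansion.

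The genuine gap lies in the almost-orthogonality estimate~\eqref{eq: plan AO}, and more precisely in the assertion that the commutators $[\theta, \chi_k(\sgn(k)D_t)]$, summed over $k$, yield a remainder controlled by $\Norm{\Pi_{\mathrm{low}}u}{D(\Aell^{m_0})}$. This is false. Take $u$ concentrated in a single high dyadic band $J_N$ with $N \geq k_0$, so that $\Pi_{\mathrm{low}}u = 0$. The commutator $[\theta, \chi_k(\sgn(k)D_t)]\Obs u$ is then nonzero for $k$ in a neighborhood of $N$: it is small, of size $O(h_k^M)$ for any $M$, because of the support separation between $\theta'$ and $[0,T]$, but it is \emph{not} zero and is not captured by any fixed low-frequency projection. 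What the computation actually delivers after summation is a remainder of the form $C\, h_{k_0}^{2M}\Norm{u}{D(\Aell^{m_0})}^{2}$ --- small times the full $D(\Aell^{m_0})$-norm of $u$, not a finite-rank quantity. The correct mechanism, implemented in the paper, is absorption: the error is kept as $h_{k_1}^{2M}\E(u)$ and $k_1$ is then chosen large enough so that it is swallowed by the left-hand side. The compact remainder in the final inequality then comes \emph{only} from the excluded low bands $|k|<k_1$, where the assumed semi-classical observability is not available; it does not come from the commutators.

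Moreover, in the case $\ell_1 < 2 m_0$ this absorption is not straightforward. The quantity you want to absorb into is $\E_{\ell_1}(u)$, whereas the commutator error as above is naturally controlled by the stronger norm $\Norm{u}{D(\Aell^{m_0})}$, i.e.\ the $(2m_0)$-energy. One needs to convert the high-frequency commutator error into one of size $h_k^M\E_{\ell_1}(u)$; the paper does this through the refined time-microlocalization estimates of Section~\ref{sec: Refined time-microlocalization estimates} (Lemma~\ref{lemma: refined estimate 1} and Lemma~\ref{lemma: second term time microlocalization}), by passing through $\Aell^{-r}$ to climb $r$ energy levels and working with a second, slightly wider cutoff $\tF_k$. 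Your plan is silent on this, beyond flagging it as "the main obstacle"; the stated hope that "only the truly low-frequency contributions survive" conceals, rather than resolves, the actual difficulty.
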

Hence, semi-classical observation on a interval of length $T$ implies
classical observation on any interval of greater length.

Note that the \rhs in \eqref{eq: observation wave} makes sense because
of \eqref{eq: bound obs op} and $u(t) \in L^2_{\loc}\big(\R;
D(\Aell^{m_0})\big)$. Note that the requirement $\ell_1 \leq 2 m_0$ is natural since $u(t)$ lies in the $(2m_0)$-energy level.

\begin{remark}
\label{rem: comp conjugate}
  Let $\bar{u}$ denote the complex conjugate. 
In many cases one has
  \begin{align}
    \label{eq: complex conjugation}
    \Norm{\Obs(\bar{u})}{K} = \Norm{\Obs(u)}{K}.
  \end{align}
As $\overline{u^k} \in E_{-k}$ if $u^k \in
E_{k}$  note that having
\begin{equation}
  \label{eq: semicl obs B+}
  \E_{\ell_1}(u^k)
  \leq C \int _0^{T} \Norm{\Obs u^k(t)}{K}^2dt,
  \qquad (u^k)_{k\in \N} \in B^+, \ k\geq k_0,
\end{equation}
implies \eqref{eq: semicl obs}.
Consequently, if \eqref{eq: complex conjugation} does hold, then
assuming \eqref{eq: semicl obs B+} suffices to reach the conclusion of 
Theorem~\ref{theorem: main result - wave}.
\end{remark}

\bigskip
In the case $\ell_1 = 2m_0$, the argument we develop leading to a proof of
Theorem~\ref{theorem: main result - wave} is based
on \cite{Lebeau:92} (see also \cite{Burq:1997}), yet with more details
provided  here. The proof of Theorem~\ref{theorem: main result - wave} in this first case is given in
Section~\ref{sec: proof ell1 = ell0}. The argument is further refined to treat the case
$\ell_1 < 2 m_0$, that is, the case of an observability estimate
with some energy loss. The proof of Theorem~\ref{theorem: main result - wave} in this  second
case is carried out in Section~\ref{sec: proof ell1 < ell0}. Even
though the second case contains the first one, we chose to provide  a
simpler proof in the first case for the benefit of the reader.

\subsection{Admissibility condition for waves}

In the introduction we also considered admissibility conditions.
Such conditions are usefull in cases where $\Obs u$ makes sense in
energy levels lower than $2 m_0$. Note that the $(2m_0)$-level is
given by the boundedness of $\Obs$ on $D(\Aell^{m_0})$; see \eqref{eq: bound obs op}. Yet, since
$\Norm{\Obs u(t)}{K}$ appears in a time-integrated form in the
sought observability estimates, in some cases, one can expect
some improvement as 
formulated with the following additional assumption.
\begin{assumption}[admissibility condition for waves at the $\ell_0$-energy level]
  \label{assumpt: admissibility}
  For some $\ell_0 \leq 2 m_0$, the operator $\Obs$ extends as an unbounded
  operator from the subspace of $L^2_{\loc} (\R; D(\Aell^{\ell_0/2}))$ into
  $L^2_{\loc} (\R; K)$, also denoted by $\Obs$,  and for some $S>0$ and $C_S>0$
  one has 
  \begin{align}
    \label{eq: assumpt: admissibility}
  \int_0^S \Norm{\Obs u(t)}{K}^2 \,  d t \leq C_S \, \E_{\ell_0}(u),
\end{align}
for any 
$u \in L^2_{\loc} \big(\R; 
  D(\Aell^{\ell_0/2})\big)\cap H^1_{\loc}\big(\R; D(\Aell^{(\ell_0-1)/2})\big)\cap
  H^2_{\loc}\big(\R; D(\Aell^{\ell_0/2-1})\big)$ solution to \eqref{eq: abstract wave equation}.
\end{assumption}
In other words, Assumption~\ref{assumpt: admissibility} states that
$\Obs$ is bounded from the space of solutions that lie in the
$\ell_0$-energy level into $L^2(0, S; K)$. Considering only $\ell_0
\leq 2 m_0$ is natural since \eqref{eq: assumpt: admissibility} holds
for $\ell_0 = 2 m_0$ by  \eqref{eq: bound obs op}.

\begin{example}
A basic example where Assumption \eqref{assumpt: admissibility} is
usefull, meaning $\ell_0 < 2 m_0$, is the case of the  Dirichlet
Laplace operator $\Delta_D$ as in the introduction and the observation operator $\Obs$ given by the
Neumann trace operator localized in an open subset $\Gamma$ of $\d \Omega$, $\Obs u = \unitfunction{\Gamma} \d_\n u_{|\d\Omega}$,  as in Example~\ref{ex: observation
  operator}-\eqref{ex: observation operator: neumann trace}. With
the 
trace map $H^{1/2 +\eps}(\Omega) \to H^{\eps}(\d\Omega)$, one can use
$D(\Obs) = H^{3/2 + \eps} \cap H^1_0(\Omega)= D(A^{m_0})$ with $m_0 = 3/4 + \eps/2$,
for any $\eps>0$.  If $\udl{u}^0 \in H^1_0(\Omega)$ and $\udl{u}^1\in
L^2(\Omega)$ the associated weak solution to the wave equation lies in $\Con^0 \big(\R; H_0^1(\Omega)  \big)
    \cap \Con^1 \big(\R; L^2(\Omega)\big)$. One thus has $\nabla u \in
    \Con^0 \big(\R; L^2(\Omega)  \big)$, a regularity too low to allow
    one to apply
    the trace theorem to define $\d_\n u_{|\d\Omega} = (\n \cdot
    \nabla  u)_{|\d\Omega}$. However, because of the so-called hidden regularity for
such a solution to the wave equation, one finds that the trace $\d_\n
u_{|\d\Omega}$ makes sense and lies in $L^2_{\loc}(\R; L^2(\d\Omega))$; see for example \cite{LLT:86}. A
weak solution lies in the $1$-energy level we have defined and
moreover one has, for any $S>0$, 
\begin{align*}
  \int_0^S \Norm{\unitfunction{\Gamma} \d_\n u_{|\d\Omega}}{L^2(\d \Omega)}^2\,  d t
  \lesssim \E_1(u).
\end{align*}
In this case, one has $1 = \ell_0 < 2 m_0 = 3/2 + \eps$. 
\end{example}

\bigskip
From the time invariance of the
energy with \eqref{eq: assumpt: admissibility} one finds 
\begin{align*}
  \int_J \Norm{\Obs u(t)}{K}^2 \,  d t \leq C_S\,  \E_{\ell_0}(u),
\end{align*}
for any interval $J$ of length $|J|=S$. Moreover, for any
bounded interval $I$ one has 
\begin{align}
  \label{eq: admissibility condition}
  \int_I \Norm{\Obs u(t)}{K}^2 \,  d t \leq C_{|I|}\,  \E_{\ell_0}(u),
\end{align}
for some $C_{|I|} >0$ only function of $|I|$. 

With Assumption~\ref{assumpt: admissibility} one obtains the following
corollary to Theorem~\ref{theorem: main result - wave}.
%%%%%%%%%%%%%%%%%%%%%%%%
% theorem       %
%%%%%%%%%%%%%%%%%%%%%%%%
\begin{corollary}
  \label{cor: main result - wave} 
    Let $\ell_1 \leq \ell_0 \leq 2 m_0$.  
  Assume that there exists $C>0$, $ k_0>0$, and $T>0$ such that \eqref{eq: semicl obs}
holds for any $U= (u^k)_{k\in \N}\in B^+$ and  any $k\geq k_0$.
Assume also  that \eqref{eq: complex conjugation} holds. Under
the unique continuation 
Assumption~\ref{assumpt: unique continuation} and the admissibility
Assumption~\ref{assumpt: admissibility} , for any $T'>T$ there exists $ C'>0$ such
that for any $(\udl{u}^0, \udl{u}^1) \in D(\Aell^{\ell_0/2})
\times D(\Aell^{(\ell_0-1)/2})$ the solution to~\eqref{eq: abstract wave
  equation} given by \eqref{eq: semicl reduc wave eq solution} satisfies 
\begin{equation}
   \label{eq: observation wave-cor}
\E_{\ell_1}(u)
  \leq C' \int_{0}^{T'} \Norm{\Obs u(t)}{K}^2\, d t. 
\end{equation}
\end{corollary}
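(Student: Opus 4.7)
The plan is to reduce Corollary~\ref{cor: main result - wave} to Theorem~\ref{theorem: main result - wave} by a density argument, with Assumption~\ref{assumpt: admissibility} supplying precisely the continuity needed to pass to the limit in the observation term.

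First, I would upgrade the hypothesis using Remark~\ref{rem: comp conjugate}: the assumed semi-classical observability on $B^+$ combined with \eqref{eq: complex conjugation} yields \eqref{eq: semicl obs} on all of $B$, so Theorem~\ref{theorem: main result - wave} applies. Since $\ell_0 \leq 2 m_0$, we have the continuous inclusions $D(\Aell^{m_0}) \hookrightarrow D(\Aell^{\ell_0/2})$ and $D(\Aell^{m_0 - 1/2}) \hookrightarrow D(\Aell^{(\ell_0-1)/2})$. In particular, for data in $D(\Aell^\infty)$ the conclusion \eqref{eq: observation wave} of Theorem~\ref{theorem: main result - wave} is available at the $\ell_1$-level.

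Next, given $(\udl{u}^0,\udl{u}^1)\in D(\Aell^{\ell_0/2})\times D(\Aell^{(\ell_0-1)/2})$ with associated solution $u$, I would choose sequences $(\udl{u}_n^0)_n,(\udl{u}_n^1)_n\subset D(\Aell^\infty)$ such that
\begin{align*}
  \Norm{\udl{u}^0-\udl{u}_n^0}{D(\Aell^{\ell_0/2})}\to 0,\qquad
  \Norm{\udl{u}^1-\udl{u}_n^1}{D(\Aell^{(\ell_0-1)/2})}\to 0,
\end{align*}
as in \eqref{eq: density smooth solutions}. Let $u_n(t)$ denote the corresponding smooth solutions, which lie in all energy levels and in particular at $t=0$ in $D(\Aell^{m_0})\times D(\Aell^{m_0-1/2})$. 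Theorem~\ref{theorem: main result - wave} applied to $u_n$ gives
\begin{align*}
  \E_{\ell_1}(u_n)\leq C'\int_0^{T'}\Norm{\Obs u_n(t)}{K}^2\,d t.
\end{align*}

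Finally, I would pass to the limit. Since $\lambda_\nu\geq \lambda_0>0$ for all $\nu$ and $\ell_1\leq \ell_0$, one has $\E_{\ell_1}(v)\leq \lambda_0^{\ell_1-\ell_0}\E_{\ell_0}(v)$ for any solution $v$ of \eqref{eq: abstract wave equation}; applied to $v=u-u_n$ this yields $\E_{\ell_1}(u-u_n)\to 0$, hence $\E_{\ell_1}(u_n)\to \E_{\ell_1}(u)$. For the right-hand side, the difference $u-u_n$ is a solution in the $\ell_0$-energy level, so Assumption~\ref{assumpt: admissibility} together with \eqref{eq: admissibility condition} on the interval $I=[0,T']$ gives
\begin{align*}
  \int_0^{T'}\Norm{\Obs(u-u_n)(t)}{K}^2\,d t\leq C_{T'}\,\E_{\ell_0}(u-u_n)\to 0,
\end{align*}
so the right-hand side converges to $C'\int_0^{T'}\Norm{\Obs u(t)}{K}^2\,d t$ and \eqref{eq: observation wave-cor} follows. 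The only subtle step is the well-posedness of $\Obs u$ itself for initial data merely in $D(\Aell^{\ell_0/2})\times D(\Aell^{(\ell_0-1)/2})$, but this is exactly the content of Assumption~\ref{assumpt: admissibility}, which simultaneously extends $\Obs$ to this class of solutions and furnishes the continuity used in the last display; no further obstacle appears.
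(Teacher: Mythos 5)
Your proof is correct and follows essentially the same route as the paper's (very terse) argument: approximate $(\udl{u}^0,\udl{u}^1)$ by data in $D(\Aell^\infty)$, apply Theorem~\ref{theorem: main result - wave} to the regular solutions (after first upgrading the $B^+$ hypothesis to $B$ via Remark~\ref{rem: comp conjugate} and \eqref{eq: complex conjugation}), and pass to the limit using that both sides of \eqref{eq: observation wave-cor} are continuous for the $\ell_0$-energy — the left-hand side since $\ell_1\leq\ell_0$ and $\lambda_\nu\geq\lambda_0>0$, the right-hand side by the admissibility estimate \eqref{eq: admissibility condition}. You simply make explicit the details the paper compresses into one sentence; no gap.
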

The proof simply uses the density of solutions in the $(2m_0)$-energy
level in the space of solution in the $\ell_0$-energy
level and that both sides of the inequality \eqref{eq: observation
  wave-cor} are continuous with respect to the
$\ell_0$-energy; continuity of the \rhs is precisely \eqref{eq:
  admissibility condition} that follows from Assumption~\ref{assumpt: admissibility}.

A remark similar to Remark~\ref{rem: comp conjugate} can be made for
the result of Corollary~\ref{cor: main result - wave}.

\subsection{Main result for the Schr\"odinger equation}

\medskip
We first state what is meant by semi-classical observability in the case of
the Schr\"odinger equation.
%%%%%%%%%%%%%
% semi-classical observability property
%%%%%%%%%%%%%
\begin{scproperty}[Schr\"odinger equation]
For some  $p_1 \in \R$,  $C>0$, $k_{0} \in \N$ and some $T>0$
one has 
\begin{equation}
  \label{eq: semicl obs schrodinger}
  \Norm{u^k}{D(\Aell^{p_1})}
  \leq C \int _0^{T} \Norm{\Obs u^k(t)}{K}dt,
  \qquad (u^k)_{k\in \N} \in B^S, \ k \geq k_0.
\end{equation}
\end{scproperty}
\medskip
Our main result in the case of a the Schr\"odinger equation is the following theorem.
%%%%%%%%%%%%%%%%%%%%%%%%
% theorem       %
%%%%%%%%%%%%%%%%%%%%%%%%
\begin{theorem}
  \label{theorem: main result Schrodinger}
  Let $p_1 \leq m_0$.  Assume that there
  exists $C>0$, $ k_0>0$, and $T>0$ such that \eqref{eq: semicl obs
    schrodinger} holds for any $U= (u^k)_{k\in \N}\in B^S$ and any
  $k\geq k_0$. Under the unique continuation Assumption~\ref{assumpt:
    unique continuation}, for any $T'>T$
  there exists $ C'>0$ such that for any
  $\udl{u}^0\in D(\Aell^{m_0})$ the solution to~\eqref{eq: abstract Schrodinger equation} given by \eqref{eq: semicl reduc Schrodinger eq solution}
  satisfies
\begin{equation}
   \label{eq: observation Schrodinger}
   \Norm{\udl{u}^0}{D(\Aell^{p_1})}
  \leq C' \int_{0}^{T'} \Norm{\Obs u(t)}{K}\, d t. 
\end{equation}
\end{theorem}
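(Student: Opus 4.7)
My strategy is the Lebeau--Burq compactness-uniqueness scheme, structurally simpler here than in the wave case since the Schr\"odinger equation is already first-order in time with no sign splitting in~$k$. Fix $\eta>0$ with $T+2\eta<T'$ and $\chi\in\Cinfc(\R)$ supported in $(-\eta,T+\eta)$ with $\chi=1$ on $[0,T]$. Decompose $\udl{u}^0=P_{\mathrm{low}}\udl{u}^0+\sum_{k\ge k_0}P_k\udl{u}^0$, with $P_k$ the orthogonal spectral projector onto $E_k^S$ and $P_{\mathrm{low}}$ onto the (finite-dimensional) span of eigenvectors with $\lambda_\nu<\alpha\y^{k_0}$.

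First I would establish a weak observability estimate
\begin{equation}
\label{eq: weak obs schrodinger - plan}
\Norm{\udl{u}^0}{D(\Aell^{p_1})}^2
\le C'\int_0^{T'}\Norm{\Obs u(t)}{K}^2\, dt + C'\Norm{\udl{u}^0}{D(\Aell^{p_1-\sigma})}^2,
\end{equation}
for some $\sigma>0$ with the embedding $D(\Aell^{p_1})\hookrightarrow D(\Aell^{p_1-\sigma})$ compact. Since $P_k u\in E_k^S$ is itself a Schr\"odinger solution, \eqref{eq: semicl obs schrodinger} combined with Cauchy--Schwarz in time gives
\[
\Norm{P_k\udl{u}^0}{D(\Aell^{p_1})}^2
\le C^2 T\int\chi(t)^2\Norm{\Obs(P_ku)(t)}{K}^2\, dt.
\]
Summing over $k\ge k_0$ then reduces \eqref{eq: weak obs schrodinger - plan} to an almost-orthogonality bound
$\sum_{k\ge k_0}\int\chi^2\Norm{\Obs(P_ku)}{K}^2\,dt\le C\int\chi^2\Norm{\Obs u}{K}^2\,dt+R_{\mathrm{low}}$,
where $R_{\mathrm{low}}$ involves only $P_{\mathrm{low}}\udl{u}^0$ and is controlled by a weaker norm thanks to finite dimensionality. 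To prove it I would apply Plancherel in time to $\chi(t)\Obs u(t)=\sum_\nu\chi(t)e^{it\lambda_\nu}u_\nu\Obs e_\nu$, whose Fourier transform reads $\widehat{\chi\,\Obs u}(\tau)=\sum_\nu\hat\chi(\tau-\lambda_\nu)u_\nu\Obs e_\nu$, and exploit the spectral gap $|\lambda_\nu-\lambda_{\nu'}|\gtrsim\y^{\max(k,k')}$ for $\nu\in J_k^S$, $\nu'\in J_{k'}^S$, $k\ne k'$, combined with the Schwartz decay of $\hat\chi$.

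Next I would conclude by a compactness-uniqueness argument. If \eqref{eq: observation Schrodinger} failed, there would exist $(\udl{u}^0_n)$ with $\Norm{\udl{u}^0_n}{D(\Aell^{p_1})}=1$ and $\int_0^{T'}\Norm{\Obs u_n(t)}{K}\,dt\to 0$ (so also its $L^2$ version by \eqref{eq: bound obs op}). By \eqref{eq: weak obs schrodinger - plan}, $\Norm{\udl{u}^0_n}{D(\Aell^{p_1-\sigma})}$ stays bounded below, and a subsequence converges strongly in $D(\Aell^{p_1-\sigma})$ to some $\udl{u}^0_\infty\ne 0$ whose Schr\"odinger solution $u_\infty$ satisfies $\Obs u_\infty\equiv 0$ on $[0,T']$. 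Writing $\Obs u_\infty(t)=\sum_\lambda e^{it\lambda}\Obs\Pi_\lambda\udl{u}^0_\infty$, where $\Pi_\lambda$ is the orthogonal projector onto the $\lambda$-eigenspace of $\Aell$, an Ingham/Haraux-type theorem for non-harmonic Fourier series forces $\Obs\Pi_\lambda\udl{u}^0_\infty=0$ for every eigenvalue $\lambda$. Since $\Pi_\lambda\udl{u}^0_\infty$ is an eigenvector of $\Aell$, Assumption~\ref{assumpt: unique continuation} yields $\Pi_\lambda\udl{u}^0_\infty=0$ for all $\lambda$, hence $\udl{u}^0_\infty=0$, a contradiction.

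The main obstacle is the almost-orthogonality bound of step one, i.e.\ controlling the cross-terms $\int\chi^2\langle\Obs(P_ku),\Obs(P_{k'}u)\rangle_K\, dt$ for $k\ne k'$. The dyadic separation of the spectral windows and the Schwartz decay of $\hat\chi$ should give the required estimate via a Cotlar--Stein/Schur-test computation, but maintaining uniformity in $u$ and $k_0$, and ensuring that only $\int_0^{T'}\|\Obs u\|_K^2\, dt$ (rather than an integral over a larger interval) appears on the right-hand side, requires care in the choice of $\chi$ and $\eta$. Once this weak estimate is in place, the compactness-uniqueness step follows a standard template.
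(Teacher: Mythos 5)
Your overall structure — a weak observability estimate obtained from the semi-classical one, followed by compactness-uniqueness — is the right template and matches the spirit of the paper (which in turn follows Lebeau and Burq). But two steps as you have written them would fail.

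The most serious gap is the uniqueness step. You write $\Obs u_\infty(t)=\sum_\lambda e^{it\lambda}\Obs\Pi_\lambda\udl{u}^0_\infty$ and invoke ``an Ingham/Haraux-type theorem'' to conclude that each coefficient $\Obs\Pi_\lambda\udl{u}^0_\infty$ vanishes. Such a theorem requires a uniform gap condition on the frequencies $\{\lambda\}$, which is simply not available in this abstract setting: the eigenvalues $\lambda_\nu$ of $\Aell$ accumulate (think of Weyl's law on a manifold), so the spectral gaps shrink to zero, and neither Ingham nor Haraux applies. Moreover, the limit $\udl{u}^0_\infty$ you obtain lives only in $D(\Aell^{p_1-\sigma})$, a space where $\Obs$ is not defined and where convergence of the Dirichlet series is unclear. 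The paper circumvents both problems at once by \emph{first} using the key estimate (together with the fact that the maps $u\mapsto u^k$ have finite rank) to prove that the space $\mathscr N_S$ of invisible solutions is finite-dimensional and stable under $\d_t$; an eigenvector of $\d_t$ in that finite-dimensional space is then an eigenfunction of $\Aell$, killed by Assumption~\ref{assumpt: unique continuation}. No non-harmonic Fourier analysis is needed; the finite dimensionality replaces the gap condition and is obtained \emph{from} the weak estimate, not assumed.

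Second, the almost-orthogonality step relies on a claimed spectral gap $|\lambda_\nu-\lambda_{\nu'}|\gtrsim\y^{\max(k,k')}$ for $\nu\in J^S_k$, $\nu'\in J^S_{k'}$, $k\neq k'$. This is false for adjacent $k$: since $1<\y<\alpha^{-1}$, consecutive windows $[\alpha\y^{k},\y^{k}/\alpha)$ and $[\alpha\y^{k+1},\y^{k+1}/\alpha)$ overlap, so the sharp projectors $P_k$ have overlapping ranges and the gap you invoke is zero for nearby indices. (Your decomposition $\udl{u}^0=P_{\mathrm{low}}\udl{u}^0+\sum_kP_k\udl{u}^0$ is therefore not an identity either.) This is fixable by restricting the gap argument to $|k-k'|$ large and using crude bounds for nearby indices, or by following the paper's route: rather than sharp spectral projectors plus Plancherel, the paper uses smooth Fourier multipliers $F_k(\hk D_t)$ in time, built so that $\sum_kF_k(\hk\tau)^2$ is both $\gtrsim 1$ on $|\tau|\geq1$ and has uniformly bounded overlap (Lemmata~\ref{lemma: semicl reduc choice F} and~\ref{lemma: semicl reduc action on L2loc}), and then deals with the commutation with the time cut-off $\psi$ via the kernel estimates of Section~\ref{sec: Refined time-microlocalization estimates Schrodinger}. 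Finally, note that your deduction of $L^2$-smallness of $\Obs u_n$ from $L^1$-smallness ``by \eqref{eq: bound obs op}'' uses the $L^\infty$ bound $\Norm{\Obs u_n}{L^\infty(K)}\lesssim\Norm{\udl u^0_n}{D(\Aell^{m_0})}$, which is \emph{not} controlled by your normalization $\Norm{\udl u^0_n}{D(\Aell^{p_1})}=1$ when $p_1<m_0$.
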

Recall that $m_0$ is as given by the continuity property \eqref{eq: bound obs op}
for $\Obs$.

Similarly to waves an admissibility assumption reads as follows.
\begin{assumption}[admissibility condition the Schr\"odinger equation
  in $D(\Aell^{p_0})$]
  \label{assumpt: admissibility Schrodinger}
  For some $p_0 \leq m_0 $, the operator $\Obs$ extends as an unbounded
  operator from the subspace of $L^2_{\loc} (\R; D(\Aell^{p_0}))$ into
  $L^2_{\loc} (\R; K)$, also denoted by $\Obs$,  and for some $S>0$ and $C_S>0$
  one has 
  \begin{align*}
  \int_0^S \Norm{\Obs u(t)}{K} \,  d t \leq C_S \, \Norm{\udl{u}^0}{D(\Aell^{p_0})},
\end{align*}
for any 
$u \in L^2_{\loc} \big(\R; 
  D(\Aell^{p_0})\big)\cap H^1_{\loc}\big(\R; D(\Aell^{p_0-1})\big)$ solution to \eqref{eq: abstract Schrodinger equation}.
\end{assumption}
Then, for any
bounded interval $I$ one has 
\begin{align}
  \label{eq: admissibility condition Schrodinger}
  \int_I \Norm{\Obs u(t)}{K} \,  d t \leq C_{|I|}\,  \Norm{\udl{u}^0}{D(\Aell^{p_0})},
\end{align}
for some $C_{|I|} >0$ only function of $|I|$. 

With Assumption~\ref{assumpt: admissibility Schrodinger} one obtains the following
corollary to Theorem~\ref{theorem: main result Schrodinger}.
%%%%%%%%%%%%%%%%%%%%%%%%
% Corollary       %
%%%%%%%%%%%%%%%%%%%%%%%%
\begin{corollary}
  \label{cor: main result Schrodinger}
  Let $p_1 \leq p_0 \leq m_0$.  Assume that there
  exists $C>0$, $ k_0>0$, and $T>0$ such that \eqref{eq: semicl obs
    schrodinger} holds for any $U= (u^k)_{k\in \N}\in B^S$ and any
  $k\geq k_0$. Under the unique continuation Assumption~\ref{assumpt:
    unique continuation} and the admissibility
  Assumption~\ref{assumpt: admissibility Schrodinger}, for any $T'>T$
  there exists $ C'>0$ such that for any
  $\udl{u}^0\in D(\Aell^{p_0})$ the solution to~\eqref{eq: abstract Schrodinger equation} given by \eqref{eq: semicl reduc Schrodinger eq solution}
  satisfies
\begin{equation}
   \label{eq: observation Schrodinger-cor}
   \Norm{\udl{u}^0}{D(\Aell^{p_1})}
  \leq C' \int_{0}^{T'} \Norm{\Obs u(t)}{K}\, d t. 
\end{equation}
\end{corollary}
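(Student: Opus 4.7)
The plan is to deduce Corollary~\ref{cor: main result Schrodinger} from Theorem~\ref{theorem: main result Schrodinger} by a density-and-continuity argument, exactly mirroring the reduction of Corollary~\ref{cor: main result - wave} to Theorem~\ref{theorem: main result - wave} for waves. The only new ingredient beyond Theorem~\ref{theorem: main result Schrodinger} is the admissibility Assumption~\ref{assumpt: admissibility Schrodinger}, whose role is precisely to make the right-hand side of \eqref{eq: observation Schrodinger-cor} well defined and continuous on $D(\Aell^{p_0})$, which is a priori strictly larger than the natural domain $D(\Aell^{m_0})$ on which $\Obs$ acts pointwise in time.

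First, given $\udl{u}^0 \in D(\Aell^{p_0})$, I would pick an approximating sequence $(\udl{u}^0_n) \subset D(\Aell^\infty) \subset D(\Aell^{m_0})$ such that $\Norm{\udl{u}^0 - \udl{u}^0_n}{D(\Aell^{p_0})} \to 0$, available from the density of $D(\Aell^\infty)$ in $D(\Aell^{p_0})$ recalled in Section~2. Let $u_n(t)$ be the corresponding solutions to \eqref{eq: abstract Schrodinger equation}. Since each $\udl{u}^0_n$ lies in $D(\Aell^{m_0})$, Theorem~\ref{theorem: main result Schrodinger} applies and yields
\begin{equation*}
\Norm{\udl{u}^0_n}{D(\Aell^{p_1})} \leq C' \int_0^{T'} \Norm{\Obs u_n(t)}{K}\, dt.
\end{equation*}

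Next I would pass to the limit $n \to \infty$ on both sides, using the topology of $D(\Aell^{p_0})$. For the left-hand side, the assumption $p_1 \leq p_0$ gives a continuous injection $D(\Aell^{p_0}) \hookrightarrow D(\Aell^{p_1})$, hence $\Norm{\udl{u}^0_n}{D(\Aell^{p_1})} \to \Norm{\udl{u}^0}{D(\Aell^{p_1})}$. For the right-hand side, $u_n - u$ is itself a solution to the Schr\"odinger equation, with initial datum $\udl{u}^0_n - \udl{u}^0 \in D(\Aell^{p_0})$, and the extended admissibility estimate \eqref{eq: admissibility condition Schrodinger} on $I=[0,T']$ gives
\begin{equation*}
\int_0^{T'} \Norm{\Obs(u_n - u)(t)}{K}\, dt \leq C_{T'} \Norm{\udl{u}^0_n - \udl{u}^0}{D(\Aell^{p_0})} \to 0.
\end{equation*}
By linearity of $\Obs$ and the triangle inequality this forces $\int_0^{T'} \Norm{\Obs u_n(t)}{K}\, dt \to \int_0^{T'} \Norm{\Obs u(t)}{K}\, dt$, and combining the two convergences yields \eqref{eq: observation Schrodinger-cor}.

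There is no genuine obstacle in this plan: all the analytic substance is in Theorem~\ref{theorem: main result Schrodinger}, and the corollary is a pure density argument. The only point worth verifying carefully is that the admissibility bound on a fixed length $S$ does upgrade to the interval $[0,T']$, which is precisely the content of \eqref{eq: admissibility condition Schrodinger} obtained by partitioning $[0,T']$ into pieces of length at most $S$ and using the isometry of the Schr\"odinger flow on $D(\Aell^{p_0})$.
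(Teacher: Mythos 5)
Your proof is correct and takes exactly the route the paper implicitly intends: the corollary is reduced to Theorem~\ref{theorem: main result Schrodinger} by density of $D(\Aell^\infty)$ in $D(\Aell^{p_0})$, using the admissibility estimate \eqref{eq: admissibility condition Schrodinger} to pass to the limit on the right-hand side, in direct analogy with the one-line reduction given after Corollary~\ref{cor: main result - wave} for waves.
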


%%%%%%%%%%%%%%%%%%
% section
%%%%%%%%%%%%%%%%%%
\subsection{Existing and potential applications}

In the introduction, we considered the wave equation on an open set of
$\R^d$. This can be generalized to the manifold setting. Consider a
compact connected Riemannian manifold $\M$ of dimension $d$ with
boundary endowed with a metric $g = (g_{ij})$.
Introduce the elliptic operator $A = A_{\k,g}= \k^{-1} \divg (\k
\nablag)$, that is, in local coordinates
\begin{align}
  \label{eq: elliptic operator}
  A f = \k^{-1} (\det g)^{-1/2} 
  \sum_{1\leq i, j \leq d}  \d_{x_i}\big( 
  \k  (\det g)^{1/2} g^{i j}(x)\d_{x_j} f
  \big).
\end{align}
where $\k$ is a positive function on $\M$. 
The metric $g$ and the function $\k$ can be assumed $\Con^k$ with $k
\geq 1$ or
Lipschitz. 
The operator $A$  is unbounded on $E= L^2(\M)$. With the domain $D(A) =H^2(\M) \cap H_0^1(\M)$ one
finds that $A$ is selfadjoint, with respect to the $L^2$-inner product, and
$A$ is negative. With the elliptic operator $A$ one also defines
 the wave operator 
 \begin{equation}
   \label{wave}
   P = P_{\k,g}=\d_t^2 - A_{\k,g},
\end{equation}
and one can consider the associated homogeneous wave equation
\begin{align*}
 \begin{cases}
     P  \, u =0
     & \text{in}\ \R\times\M,\\
     u=0 &  \text{in}\ \R \times\d\M,\\
      u_{|t=0}  = \udl{u}^0, \ \d_t u_{|t=0}  = \udl{u}^1 
      & \text{in}\ \M.
  \end{cases}
\end{align*}

For an open set $\omega\subset \M$ one can consider the observation
operator with $K = L^2(\Omega)$ and the action on a solution to the
wave equation given by $\Obs_\omega  u = \unitfunction{\R\times \omega} \d_t u$. It maps a
weak-solution as above into $L_{\loc}^2(\R\times \omega)$. 
For an open set $\Gamma \subset \d\M$ one can consider the
observation operator  with $K = L^2(\Gamma)$  and the action on a solution to the
wave equation given by $\Obs_\Gamma u = \unitfunction{\R\times \Gamma} \d_\n u_{|\d\Omega}$,
where $\d_{\n}$ is normal derivative at the boundary.
In both cases the admissibility Assumption~\ref{assumpt:
  admissibility} holds as one has 
\begin{align*}
  \int_0^S \Norm{\Obs_\omega u(t)}{L^2(\omega)}^2 d t \lesssim
  \E_1(u),
\ \ \text{and} \ \  
\int_0^S \Norm{\Obs_\Gamma u(t)}{L^2(\Gamma)}^2 d t \lesssim
  \E_1(u),
\end{align*}
for a weak solution and  for some $S>0$.  The second property is in fact the so-called
hidden regularity property of waves; see \eg \cite{LLT:86}.  In both
cases Assumption~\ref{assumpt: unique
  continuation} holds with classical unique continuation results for
elliptic operators; see for instance \cite[Theorem 2.4]{Hoermander:83}
and \cite[Theorems 5.11 and
5.13]{LRLR:V1}.  The result of Corollary~\ref{cor: main result - wave}  thus
applies. It is used without loss of energy, that is, in the case
$\ell_1 = 1$, in \cite{Burq:1997} for a boundary observation in the
case of 
$\Con^2$-coefficients and in
\cite{BDLR1} for both types of observations in the case of
$\Con^1$-coefficients with also result for Lipschitz coefficients by a
perturbation argument.  In these references,
powerful tools of semi-classical analysis and semi-classical measures
are key to prove a semi-classical observability estimate as in
\eqref{eq: semicl obs}.

\medskip Here, we also treat the case of the loss of derivatives, that is,
if $\ell_1 < \ell_0$ in the assumed semi-classical observability
estimate \eqref{eq: semicl obs} and in the resulting observability
estimate in Theorem~\ref{theorem: main result - wave} and Corollary~\ref{cor: main result - wave}. Estimates with such
losses can be found in the literature. We refer for instance to the
work of F.~Fanelli and E.~Zuazua \cite{Fan-Zua}. Their result is in
the case of very rough coefficients (log-Lipschitz) and only concerns
the wave equation in 
one space dimension. Results in higher dimensions are open to our knowledge and the
study of such cases could benefit from the use of simpler
localized-in-frequency waves and their semi-classical setting. Though observation estimates with loss of derivatives are not so common for waves, they appear quite naturally for Schr\"odinger equations. See~\cite{Burq-93, Bu-Leb-92}  for such results in the presence of weak (hyperbolic) trapping,  or~\cite[Sections 6.4 \& 6.5]{BuZw} and~\cite[Section 4]{Tu-1} for the observability of Schr\"odinger on the square with an observation in (say) the vertical boundary.

%%%%%%%%%%%%%%%%%%
% section
%%%%%%%%%%%%%%%%%%
\section{Time microlocalization}
\label{sec: Time microlocalization 1}
Let $\H$ be a Hilbert space, $\H = D(\Aell^s)$ for some $s\in \R$ or $\H= K$ in what
follows.  For a function $F \in \Cinfc(\R_+^*)$, with $F \geq 0$, set
$F_k(\tau) = F(\sgn(k) \tau)$ for $k\in \ZZ^*$, and consider the
operator $F_k(\hk D_t)$ that simply acts as a Fourier multipliers on
function of time $t$ with values in $\H$. Most often we shall write
$F_k^\H(\hk D_t)$ to keep explicit on which space the operator
acts. Since $F_k(\hk\tau)$ is bounded,  $F_k^\H(\hk D_t)$ maps $L^2(\R; \H)$ into
itself. It also maps $\S(\R; \H)$ (\resp $\S'(\R; \H)$) into
itself. We shall choose $F$ according to the following lemma.
%%%%%%%%%%%%%%%%%%%%%%%%
% lemma                %
%%%%%%%%%%%%%%%%%%%%%%%%
\begin{lemma}
  \label{lemma: semicl reduc choice F}
  One can choose $F$ supported in $]\alpha, \alpha^{-1}[$ 
 with $\sum_{k\in \ZZ^*} F_k(\hk \tau)^2 \geq 1$ if $|\tau| \geq 1$. 
\end{lemma}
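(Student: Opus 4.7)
The plan is to exploit the support condition on $F$ together with the precise relation $\varrho \in \,]1,1/\alpha[$ to reduce the inequality to a covering statement about the geometric sequence $\varrho^k$, and then to construct $F$ as a bump function that is bounded below on one fundamental scale.

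First, since $F$ is supported in $\R_+^*$ and $F_k(\hk\tau)=F(\sgn(k)\hk\tau)$, only the indices $k$ with $\sgn(k)\tau>0$ contribute; for $\tau\geq 1$ the sum reduces to $\sum_{k\geq 1}F(\varrho^{-k}\tau)^2$, and for $\tau\leq-1$ it reduces symmetrically to $\sum_{k\geq 1}F(\varrho^{-k}|\tau|)^2$. So it suffices to construct $F\in\Cinfc(]\alpha,\alpha^{-1}[)$ with $F\geq 0$ such that
\begin{equation*}
\sum_{k\geq 1} F(\varrho^{-k}\tau)^2 \geq 1 \qquad \text{for every } \tau\geq 1.
\end{equation*}

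Next I would pick a closed subinterval $[a,b]\subset \,]\alpha,\alpha^{-1}[$ such that the dilates $[a\varrho^k,b\varrho^k]$ cover $[1,+\infty)$. Two conditions are required: consecutive dilates must overlap, i.e.\ $a\varrho^{k+1}\leq b\varrho^k$, which is equivalent to $b\geq a\varrho$; and the first dilate must start no later than $1$, i.e.\ $a\varrho\leq 1$. The assumption $\varrho\in\,]1,1/\alpha[$ gives $\alpha<1/\varrho<1<\alpha^{-1}$, so one can choose $a\in\,]\alpha,1/\varrho]$ and then $b=a\varrho\in\,]\alpha\varrho,1]\subset\,]\alpha,\alpha^{-1}[$. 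With this choice, for every $\tau\geq 1$ the set $\{k\in\N^*:\ \varrho^{-k}\tau\in[a,b]\}$ is nonempty, since $[\log_\varrho(\tau/b),\log_\varrho(\tau/a)]$ is an interval of length $\log_\varrho(b/a)=1$ whose right endpoint $\log_\varrho(\tau/a)\geq\log_\varrho(1/a)\geq 1$, hence contains an integer $k\geq 1$.

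Finally, choose any $F\in\Cinfc(]\alpha,\alpha^{-1}[)$ with $F\geq 0$ and $F\equiv 1$ on $[a,b]$ (a standard mollified indicator function of $[a,b]$ works). Then whenever $\tau\geq 1$, there exists at least one $k\geq 1$ with $\varrho^{-k}\tau\in[a,b]$, and hence $F(\varrho^{-k}\tau)^2\geq 1$, which yields the desired lower bound. There is no genuine obstacle here; the only point to verify carefully is that the strict inequality $\varrho<1/\alpha$ allows the window $[a,a\varrho]$ to sit inside $]\alpha,\alpha^{-1}[$ with its first dilate already reaching down to $1$, which is exactly what the hypothesis ensures.
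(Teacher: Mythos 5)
Your proof is correct and follows essentially the same strategy as the paper's: choose a bump function $F$ identically equal to $1$ on a window inside $]\alpha,\alpha^{-1}[$, and show that the dyadic dilates of that window cover $[1,\infty)$ so that some $F_k(\hk\tau)=1$ for every $|\tau|\geq 1$. The only cosmetic difference is the window: you take $[a,a\varrho]$ of exact length one in the $\log_\varrho$ scale, while the paper takes the symmetric window $[a,a^{-1}]$ with $\alpha<a<\varrho^{-1}$, which has length greater than two and hence also contains an integer $|k|\geq 1$ in the relevant range.
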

%%%% proof of lemma
\begin{proof}
  Let $\alpha < a < \rho^{-1}<1$ and $F \in \Cinfc(]\alpha, \alpha^{-1}[)$ such
  that $F=1$ on $[a,a^{-1}]$.  Let $|\tau|\geq 1$.  With $\hk =
  \rho^{-|k|}$, one has
  $F_k(\hk \tau)=1$ if $a< \rho^{-|k|} |\tau| < a^{-1}$ and $\sgn(k) =
  \sgn(\tau)$, or equivalently
  \begin{align*}
  \frac{\ln(|\tau|) + \ln(a)}{\ln (\rho)}
  \leq  |k| \leq
   \frac{\ln(|\tau|) + \ln(a^{-1})}{\ln (\rho)}
    \ \ \text{and} \ \  
    \sgn(k) = \sgn(\tau).
  \end{align*}
  The difference between the two bounds is $2 \ln(a^{-1}) / \ln
  (\rho)>2$  and one has $\frac{\ln(|\tau|) + \ln(a^{-1})}{\ln (\rho)} >1$.
  Hence, there is at least one value of $k \in \ZZ^*$ such
that  $F_k(\hk \tau) = 1$. 
\end{proof}

\medskip
Let $\T>0$.
For $\indj \in \ZZ$ set 
\begin{align}
  \label{eq: def I ell}
  I_\indj = [\indj \T, (\indj+1)\T[.
\end{align}
Define $H_\H$ as the space of
functions $w \in L^2_{\loc}(\R; \H)$ such that
\begin{align}
  \label{eq: def H - abstract arg}
  \Norm{w}{H_\H} := \sup_{\indj \in Z} \Norm{1_{I_\indj} w}{L^2(\R; \H)} < \infty,
\end{align} 
that is, the space of uniformly locally $L^2$-bounded functions with
values in $\H$.

One has $H_\H \subset \S'(\R; \H)$ and thus $F_k^\H(\hk D_t)w$ is a well
defined tempered distribution in time $t$ with values in $\H$.  The following lemma
improves upon this result. 
%%%%%%%%%%%%%%%%%%%%%%%%
% lemma                %
%%%%%%%%%%%%%%%%%%%%%%%%
\begin{lemma}
  \label{lemma: semicl reduc action on L2loc}
  The operator $F_k^\H(\hk D_t)$ fulfills the following properties.
  \begin{enumerate}
  \item One has 
    \begin{align}
      \label{eq: summation formula Fk}
      F_k^\H(\hk D_t) w (t) = \sum_{\indj \in \ZZ}   F_k^\H(\hk D_t)
      \big(1_{I_\indj} w\big),\qquad w \in H_\H,
    \end{align}
    and for any $\phi \in \Cinfc(\R)$, there exists $C>0$ such that 
    \begin{align}
      \label{eq: bound Linf Fk}
      \Norm{\phi F_k^\H(\hk D_t) w}{L^\infty(\R; \H)} \leq C  \Norm{w}{H_\H},
    \end{align}
    meaning that $F_k^\H(\hk D_t)$ maps $H_\H$ into $L_{\loc}^\infty(\R; \H)$ continuously.
    
  \item There exists $C>0$ such that 
    \begin{align*}
      \sum_{k \in \ZZ^*} \Norm{F_k^\H(\hk D_t)(\psi w)}{L^2(\R; \H)}^2 
      \leq C \Norm{\psi w}{L^2(\R; \H)}^2, 
    \end{align*}
    for $w \in H_\H$ and $\psi \in L^\infty(\R)$ with compact support. 
    \item If $\varphi \in \Cinfc(]0,\T[)$ and $\psi \in L^\infty(\R)$
      is such that $\psi =1$ in $I_0$, then for any $M\geq 1$,
      there exists $C_M>0$ such that 
      \begin{align}
        \label{eq: w -> psi w}
        \bigNorm{\varphi F_k^\H(\hk D_t) 
        \big((1- \psi) w\big) }{L^2(\R; \H)}
        \leq C_M \hk^M \Norm{w}{H_\H}.
      \end{align}
  \end{enumerate}
\end{lemma}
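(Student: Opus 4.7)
My plan is to realize $F_k^\H(\hk D_t)$ as convolution against the kernel $K_k(t) = \hk^{-1} \check F_k(t/\hk)$, which is Schwartz, concentrated at scale $\hk$, has uniformly bounded $L^1$-mass, and satisfies the standard bounds $|K_k(r)| \leq C_N \hk^{-1}(1+|r|/\hk)^{-N}$ for every $N\geq 0$. Since $w \in H_\H$ has uniform $L^2$-mass on the intervals $I_\indj$ it is a tempered distribution and $w = \sum_\indj 1_{I_\indj} w$ converges in $\S'(\R;\H)$; continuity of the Fourier multiplier $F_k^\H(\hk D_t)$ on $\S'$ then produces the summation formula~\eqref{eq: summation formula Fk}. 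For the $L^\infty$-bound in part (1), I fix $t \in \supp\phi$ and apply Cauchy--Schwarz on each interval,
\begin{align*}
  \bigNorm{\int_{I_\indj} K_k(t-s)w(s)\, ds}{\H}
  \leq \Norm{K_k(t-\cdot)}{L^2(I_\indj)} \Norm{w}{L^2(I_\indj;\H)}
  \leq \Norm{K_k(t-\cdot)}{L^2(I_\indj)}\,\Norm{w}{H_\H};
\end{align*}
the rapid decay of $K_k$ ensures $\sum_\indj \Norm{K_k(t-\cdot)}{L^2(I_\indj)}$ is finite uniformly in $t \in \supp\phi$, yielding~\eqref{eq: bound Linf Fk}.

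For part (2), the key point is almost-orthogonality of the symbols $\tau \mapsto F_k(\hk\tau)$: by construction $F_k(\hk\tau)\neq 0$ only when $\sgn(k)\tau \in (\alpha\rho^{|k|}, \rho^{|k|}/\alpha)$, and the counting argument used in the proof of Lemma~\ref{lemma: semicl reduc choice F} shows that at most finitely many $k \in \ZZ^*$ contribute for a fixed $\tau$. Hence $\sum_k F_k(\hk\tau)^2 \leq C$ pointwise in $\tau$, and Plancherel in time with values in $\H$ gives
\begin{align*}
  \sum_{k \in \ZZ^*} \Norm{F_k^\H(\hk D_t)(\psi w)}{L^2(\R;\H)}^2
  = (2\pi)^{-1}\int \sum_{k} F_k(\hk\tau)^2 \Norm{\widehat{\psi w}(\tau)}{\H}^2\, d\tau
  \leq C\Norm{\psi w}{L^2(\R;\H)}^2,
\end{align*}
which is meaningful since $\psi w \in L^2(\R;\H)$ by the compact support and boundedness of $\psi$.

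For part (3), the hypothesis $\psi \equiv 1$ on $I_0$ together with $\supp\varphi \subset\subset \,]0,\T[$ forces $\supp\bigl((1-\psi)w\bigr) \subset \R \setminus I_0$, at a fixed positive distance $c_0 = \dist(\supp\varphi, \R\setminus I_0)$ from $\supp\varphi$. For $t \in \supp\varphi$ and $s \in I_\indj$ with $|\indj|\geq 1$ (the $\indj = 0$ contribution vanishes) one has $|t-s| \geq c_0 + (|\indj|-1)\T$, so for $k$ large enough that $\hk \leq c_0$ the kernel satisfies $|K_k(t-s)|\leq C_N \hk^{N-1}|t-s|^{-N}$. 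Cauchy--Schwarz on each $I_\indj$ as in part (1) then gives the pointwise bound
\begin{align*}
  \bigNorm{\varphi(t) F_k^\H(\hk D_t)((1-\psi)w)(t)}{\H}
  \leq C_N \hk^{N-1} \Norm{w}{H_\H} \sum_{|\indj|\geq 1} \frac{\T^{1/2}\, \Norm{1-\psi}{L^\infty}}{(c_0 + (|\indj|-1)\T)^N},
\end{align*}
and the sum converges for $N \geq 2$. Taking $N = M+1$ and integrating in $t$ over the bounded set $\supp\varphi$ produces~\eqref{eq: w -> psi w}. The principal technical point is the uniform-in-$k$ gain of $\hk^M$ in (3); it comes entirely from the $\hk$-scale concentration of $K_k$ coupled with the fixed gap between $\supp\varphi$ and $\supp((1-\psi)w)$, while (1) and (2) are routine once the convolution representation and almost-orthogonality are in hand.
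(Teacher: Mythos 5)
Your proof is correct and follows essentially the same strategy as the paper's: the paper's repeated integrations by parts in the oscillatory integral $\iint e^{i(t-s)\tau} F_k(\hk\tau) w_\indj(s)\,d\tau\,ds$ are precisely a hands-on derivation of the kernel bound $|K_k(r)|\leq C_N \hk^{-1}(1+|r|/\hk)^{-N}$ that you invoke, the block decomposition $w=\sum_\indj 1_{I_\indj}w$ is the same, and parts (2) and (3) use the identical Plancherel and separation-of-supports arguments. The only cosmetic difference is in part (1), where you apply Cauchy--Schwarz against $\Norm{K_k(t-\cdot)}{L^2(I_\indj)}$ on each block whereas the paper uses the cruder $L^1$--$L^\infty$ bound for the nearby blocks and the integrated-by-parts decay for the distant ones; both give a $k$-dependent constant, which is all the lemma asserts.
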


\medskip
%%%% proof of lemma
\begin{proof}%[Proof of Lemma~\ref{lemma: semicl reduc action on
  % L2loc}]
  
Let $w \in H_\H$ and set $w_\indj = 1_{I_\indj} w$. One has 
\begin{align*}
  F_k^\H(\hk D_t) w_\indj (t)
  = \frac{1}{2 \pi} \iint e^{i t \tau}
    F_k(\hk \tau) \hat{w}_\indj (\tau) d  \tau
    = \frac{1}{2 \pi} \iint e^{i (t-s) \tau}
                               F_k(\hk \tau) w_\indj (s) d \tau d s.
\end{align*}
Note that $ F_k^\H(\hk D_t) w_\indj (t) \in \S(\R; \H)$ since its Fourier
transform in $t$, $F_k(\hk \tau) \hat{w}_\indj(\tau)$,  is  in
$\Cinfc(\R; \H)$.
One finds
\begin{align}
  \label{eq: basic estimate Fk wl}
  \Norm{F_k^\H(\hk D_t) w_\indj (t)}{\H}
  &\lesssim  \int_\R  F_k(\hk \tau) d \tau \, \int_\R \Norm{w_\indj (s)}{\H} d s
  \lesssim \T^{1/2} \hk^{-1} \Norm{F}{L^1} \Norm{w_\indj}{L^2(\R; \H)}\\
  &\lesssim \T^{1/2} \hk^{-1} \Norm{F}{L^1} \Norm{w}{H_\H},
    \qquad t \in \R. \notag
\end{align}
For the first point of the lemma we treat the case
$k>0$. The case $k<0$ can be treated similarly.
Consider $\phi \in
\Cinfc(\R)$ and $\indj\in\ZZ$ such that $\dist(\supp(\phi),I_\indj) \geq
\gamma >0$. Using that
\begin{align*}
  \frac{-i }{t-s}\d_\tau e^{i (t-s) \tau} = e^{i (t-s) \tau},
\end{align*}
for $t\neq s$, with $N$ integrations by parts one writes
\begin{align*}
  \phi(t) F_k^\H(\hk D_t) w_\indj (t)
  = \frac{i^N \hk^N}{2 \pi} \iint e^{i (t-s) \tau}
  F_k^{(N)}(\hk\tau) \frac{\phi(t) w_\indj (s)}{(t-s)^N} d \tau d s.
\end{align*}
One finds
\begin{align}
  \label{eq: decay estimate Fk}
  \Norm{\phi(t) F_k^\H(\hk D_t) w_\indj (t)}{\H}
  \lesssim \T^{1/2} \hk^{N-1}\gamma^{-N}\Norm{\phi}{L^\infty}
  \Norm{F^{(N)}}{L^1} \Norm{w}{H_\H}, \quad t\in \R.
\end{align}
If one chooses $N \geq 2$, 
with the $\gamma^{-N}$ factor the series $\sum_\indj F_k^\H(\hk D_t)
w_\indj (t)$ converges in $L_{\loc}^\infty(\R; \H)$.
Since $\sum_\indj F_k^\H(\hk D_t) w_\indj$ converges to $F_k^\H(\hk D_t) w$ in $\S'(\R; \H)$ one
concludes that $F_k^\H(\hk D_t) \in L_{\loc}^\infty(\R; \H)$ and that \eqref{eq: summation formula Fk} holds. One also concludes that the estimate
\eqref{eq: bound Linf Fk} holds using also
\eqref{eq: basic estimate Fk wl} for a finite numbers of terms.

\medskip 
Let now $\psi \in L^\infty$ have  compact support. Then $\psi
w \in L^2(\R; \H)$ and the Fourier transform of  $F_k^\H(\hk D_t) (\psi
w)$ is  $F_k(\hk \tau) \widehat{\psi w}(\tau)$ giving 
\begin{align*}
  \Norm{F_k^\H(\hk D_t) (\psi w)}{L^2(\R;\H)}^2
  = \int_\R F_k(\hk \tau)^2  \Norm{ \widehat{\psi w} (\tau)}{\H}^2\,  d \tau.
\end{align*}
Since $\supp(F) \subset [\alpha,\alpha^{-1}]$ and $\hk = \rho^{-|k|}$
one finds that $F_k(\hk \tau)\neq 0$ if $\tau \neq 0$ and 
\begin{align*}
  \frac{\ln(|\tau|) + \ln(\alpha)}{\ln (\rho)}
  \leq  |k| \leq
   \frac{\ln(|\tau|) + \ln(\alpha^{-1})}{\ln (\rho)}.
\end{align*}
The difference between the two bounds is $2 \ln(\alpha^{-1}) / \ln
(\rho)$. Most important, it is constant. Hence, the sum $\sum_k F_k(\hk \tau)^2$ only
involves a finite number $m$ of terms that is independent of
$\tau$. Consequently
\begin{align*}
  \sum_{k \in \ZZ^*} \Norm{F_k^\H(\hk D_t)(\psi w)}{L^2(\R; \H)}^2
  \leq
 m \Norm{F}{L^\infty}^2  \Norm{\widehat{\psi w}}{L^2(\R; \H)}^2
  \leq
  m \Norm{F}{L^\infty}^2
  \Norm{\psi w}{L^2(\R; \H)}^2.
\end{align*}

\medskip
Finally, consider $\varphi \in \Cinfc(]0,\T [)$ and $\psi \in L^\infty(\R)$
such that $\psi =1$ in $I_0$.
With \eqref{eq: summation formula Fk} one has
\begin{align*}
  F_k^\H(\hk D_t) \big( (1-\psi) w\big)
  =\sum_{|\indj| \geq 1}  F_k^\H(\hk D_t) \big( (1-\psi) w_\indj\big), 
  \qquad w_\indj = 1_{I_\indj} w.
\end{align*}
Let $N \geq 2$. 
For $|\indj|=1$ with 
\eqref{eq: decay estimate Fk} one obtains, 
\begin{align*}
  \Norm{\varphi(t) F_k^\H(\hk D_t) \big( (1-\psi) w_\indj\big) (t)}{L^\infty(\R;\H)}
  \leq C_N \T^{1/2} \hk^{N-1} \gamma_0^{-N} \Norm{\varphi}{L^\infty}
  \Norm{w}{H_\H},
\end{align*}
with $\gamma_0 = \dist\big( \supp(\varphi), I_0^c\big)$, using that  $\Norm{ (1-\psi) w}{H}\lesssim \Norm{w}{H_\H}$.
For $|\indj|\geq 2$ one finds in turn
\begin{align*}
  \Norm{\varphi(t) F_k^\H(\hk D_t) \big( (1-\psi) w_\indj\big) (t)}{L^\infty(\R;\H)}
  \leq C_N \T^{1/2} \hk^{N-1} \big( (|\indj|-1) \T\big)^{-N} \Norm{\varphi}{L^\infty}
  \Norm{w}{H_\H}.
\end{align*}
Since $\sum_{|\indj|\geq 2} \big( (|\indj|-1) \T\big)^{-N}$ converges one
concludes that \eqref{eq: w -> psi w} holds. 
\end{proof}

\subsection{Action on waves}
We now consider the action of  $F_k(\hk D_t)$ on a solution $u(t)$
of the abstract wave equation \eqref{eq: abstract wave equation} as
given by \eqref{eq: semicl reduc wave eq solution} that lies in the
$\ell$-energy level for some $\ell \in \R$.  In such case, if one uses $\H =
D(\Aell^{\ell/2})$, with \eqref{eq: semicl reduc energy 4} one sees that $u \in H_{D(\Aell^{\ell/2})}$ as defined in \eqref{eq: def H -
  abstract arg}. Thus, $F_k^{D(\Aell^{\ell/2})}(\hk D_t) u (t)$ makes
sense by Lemma~\ref{lemma: semicl reduc action on L2loc}. 
One has the following result. 
%%%%%%%%%%%%%%%%%%%%%%%%
% lemma                %
%%%%%%%%%%%%%%%%%%%%%%%%
\begin{lemma}
  \label{lemma: semicl reduc action on y}
  Let $u(t)$ be a solution to \eqref{eq: abstract wave equation} that
  lies in the $\ell$-energy level for some $\ell \in \R$.  
  Let $k \in \ZZ^*$. 
  One has  $F_k^{D(\Aell^{\ell/2})}(\hk D_t) u (t) \in E_k$ and 
  \begin{align}
    \label{eq: semicl reduc action on y}
     F_k^{D(\Aell^{\ell/2})}(\hk D_t) u (t)
    &=\sum_{\nu \in J_k}
  \begin{cases}
    F_k \big( \hk \sqrt{\lambda_\nu}\big)
    e^{i t \sqrt{\lambda_\nu}} u_\nu^+ e_\nu
    & \text{if} \ k>0,\\
     F_k \big( -\hk \sqrt{\lambda_\nu}\big) e^{-i t \sqrt{\lambda_\nu}}
     u_\nu^- e_\nu
      & \text{if} \ k<0.\\
  \end{cases}
  \end{align}
\end{lemma}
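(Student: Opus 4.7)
The plan is to apply the Fourier multiplier $F_k(\hk D_t)$ directly to the eigenfunction expansion of $u(t)$ given in \eqref{eq: semicl reduc wave eq solution} and then exploit the support properties of the symbol $F$.

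First I would note that, since $u$ lies in the $\ell$-energy level, by \eqref{eq: semicl reduc energy 3} the truncated sums
\[
  u_N(t) = \sum_{\nu \leq N} \bigl( e^{i t \sqrt{\lambda_\nu}} u_\nu^+ + e^{-i t \sqrt{\lambda_\nu}} u_\nu^- \bigr) e_\nu
\]
converge to $u$ in $\Con^0(\R; D(\Aell^{\ell/2}))$, hence in the space $H_{D(\Aell^{\ell/2})}$ defined in \eqref{eq: def H - abstract arg}. After cutting off in time by any fixed $\psi \in \Cinfc(\R)$, the $L^2$-continuity statement of Lemma~\ref{lemma: semicl reduc action on L2loc}(2) gives $F_k^{D(\Aell^{\ell/2})}(\hk D_t) u_N \to F_k^{D(\Aell^{\ell/2})}(\hk D_t) u$ in $L^2_{\loc}(\R; D(\Aell^{\ell/2}))$. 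It will therefore be enough to identify the limit mode by mode.

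Second, for each individual mode $e^{\pm i t \sqrt{\lambda_\nu}} u_\nu^\pm e_\nu$, viewed as a tempered distribution in $t$ with values in $D(\Aell^{\ell/2})$, the Fourier transform in $t$ is the vector-valued Dirac mass $2\pi \delta(\tau \mp \sqrt{\lambda_\nu})\, u_\nu^\pm e_\nu$. Multiplication by the smooth compactly supported symbol $F_k(\hk \tau)$ followed by Fourier inversion produces
\[
  F_k^{D(\Aell^{\ell/2})}(\hk D_t) \bigl( e^{\pm i t \sqrt{\lambda_\nu}} u_\nu^\pm e_\nu \bigr) = F_k(\pm \hk \sqrt{\lambda_\nu})\, e^{\pm i t \sqrt{\lambda_\nu}} u_\nu^\pm e_\nu.
\]

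Finally, I would invoke the support of $F$. Since $\supp F \subset \,]\alpha, \alpha^{-1}[\,\subset \R_+^*$ and $F_k(\tau) = F(\sgn(k)\tau)$, the scalar $F_k(\pm \hk \sqrt{\lambda_\nu})$ vanishes unless the sign $\pm$ coincides with $\sgn(k)$ and $\hk \sqrt{\lambda_\nu} \in \,]\alpha, \alpha^{-1}[\,$, which is precisely $\nu \in J_k$. Summing over $\nu$ and combining with the first step yields \eqref{eq: semicl reduc action on y}; as $J_k$ is finite this right-hand side clearly belongs to $E_k$. The only (mild) obstacle I anticipate is justifying the interchange of $F_k(\hk D_t)$ with the infinite sum over $\nu$, and this is handled by the $L^2$-continuity property invoked in the first step, which avoids any delicate pointwise convergence argument.
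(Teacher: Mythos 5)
Your overall strategy coincides with the paper's: expand $u(t)$ in its eigenfunction series, pass the Fourier multiplier through the sum, compute mode by mode using $F_k(\hk D_t)\,e^{irt}=F_k(\hk r)\,e^{irt}$, and then invoke $\supp F$ to restrict the sum to $\nu\in J_k$ and pick the correct sign. Where you go slightly astray is in the justification of the interchange. You cite part~(2) of Lemma~\ref{lemma: semicl reduc action on L2loc}, which bounds $\Norm{F_k^{\H}(\hk D_t)(\psi w)}{L^2(\R;\H)}$, \emph{i.e.}\ the multiplier applied \emph{after} the time cut-off. Since $F_k^{\H}(\hk D_t)$ is a nonlocal operator it does not commute with multiplication by $\psi$, so from $F_k^{\H}(\hk D_t)(\psi u_N)\to F_k^{\H}(\hk D_t)(\psi u)$ in $L^2$ you cannot directly conclude that $F_k^{\H}(\hk D_t) u_N\to F_k^{\H}(\hk D_t) u$ in $L^2_{\loc}$, which is what your argument actually needs. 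The estimate tailored to this purpose is part~(1) of the same lemma, namely the continuity of $F_k^{\H}(\hk D_t)$ from $H_{\H}$ into $L^\infty_{\loc}(\R;\H)$, and this is precisely what the paper invokes: the series for $u$ converges in $H_{D(\Aell^{\ell/2})}$, so the multiplier may be applied term by term. Equivalently, since $u_N\to u$ in $\S'(\R;D(\Aell^{\ell/2}))$ and the Fourier multiplier acts continuously on $\S'$, the identity already holds distributionally, and because the right-hand side stabilizes to a finite sum for $N$ large this identifies the limit. So the idea is right but the cited estimate is the wrong tool; replacing part~(2) by part~(1) (or by the $\S'$ argument just described) makes your proof coincide with the paper's.
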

\begin{proof}
Observe that the series in \eqref{eq: semicl reduc wave eq solution}
that defines $u(t)$ converges in the space $H_{D(\Aell^{\ell/2})}$. Hence, with the
first point in Lemma~\ref{lemma: semicl reduc action on L2loc} one finds
\begin{align*}
    F_k^{D(\Aell^{\ell/2})}(\hk D_t) u (t)
    &= \sum_{\nu \in \N} F_k^{D(\Aell^{\ell/2})}(\hk D_t) \big( e^{i t \sqrt{\lambda_\nu}} u_\nu^+
    + e^{-i t \sqrt{\lambda_\nu}} u_\nu^-\big) e_\nu.
\end{align*}
One has $F_k(\hk D_t) e^{i r t}  = F_k(\hk r)  e^{i r t}$, $r\in \R$; see for
instance  (18.1.27) in \cite{Hoermander:V3}. This gives
\begin{align*}
    F_k^{D(\Aell^{\ell/2})}(\hk D_t) u (t)
    &=\sum_{\nu \in J_k}
  \Big( F_k \big( \hk \sqrt{\lambda_\nu}\big) e^{i t \sqrt{\lambda_\nu}} u_\nu^+
    + F_k \big( - \hk \sqrt{\lambda_\nu}\big)  
      e^{-i t \sqrt{\lambda_\nu}} u_\nu^-\Big)e_\nu,
\end{align*}
  since $F_k(\hk \sqrt{\lambda_\nu}\big) =0$ unless $\nu \in
  J_k$. This gives the result using the dependency of the support of $F_k$ upon
  the sign of $k$. 
\end{proof}

\bigskip
Because of the form of $u^k(t)=  F_k^{D(\Aell^{\ell/2})}(\hk D_t) u (t)$ one sees
that $u^k(t)$ is also solution to the wave equation.
Yet, as the sum is
finite  in \eqref{eq: semicl reduc action on y} one has
\begin{align}
  \label{eq: regularity yk}
   u^k(t)  \in \Con^m\big(\R; D(\Aell^r)\big), \quad \forall m\in
  \N, \ r \in \R,
\end{align}
that is, the wave $u^k(t)$ lies in all energy levels.

\medskip
We now consider the particular case of a solution $u(t)$ that lies in
the $(2m_0)$-energy level with $m_0$ as appearing in the continuity
property~\eqref{eq: bound obs op} of $\Obs$. 
%%%%%%%%%%%%%%%%%%%%%%%%
% lemma                %
%%%%%%%%%%%%%%%%%%%%%%%%
\begin{lemma}
  \label{lemma: semicl filter}
  Let $u(t)$ be a solution to \eqref{eq: abstract wave equation} that
  lies in the $m_0$-energy level. 
  One has   $\Obs u^k  =  F_k ^{K} (\hk D_t) \Obs u$ in $L^\infty_{\loc}
  (\R; K)$ for $k \in \ZZ^*$.
\end{lemma}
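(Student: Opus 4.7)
The strategy is that $\Obs$, being a purely spatial operator, commutes with the time-Fourier multiplier $F_k(\hk D_t)$, and the identity should follow by direct computation on the spectral decomposition once convergence issues are settled.

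First, I would verify that both sides are well defined in $L^\infty_{\loc}(\R;K)$. Since $u$ lies in the $(2m_0)$-energy level, $u\in\Con^0\big(\R;D(\Aell^{m_0})\big)$; hence $\Obs u\in\Con^0(\R;K)\subset H_K$ by \eqref{eq: bound obs op}, and thus $F_k^K(\hk D_t)\Obs u$ makes sense by Lemma~\ref{lemma: semicl reduc action on L2loc}. On the other side, $u^k=F_k^{D(\Aell^{m_0})}(\hk D_t)u$ lies in $E_k\subset D(\Aell^\infty)$ by Lemma~\ref{lemma: semicl reduc action on y} and \eqref{eq: regularity yk}, so $\Obs u^k$ is a continuous function of $t$ with values in $K$.

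Second, I would prove the identity for solutions with finite spectral support. Let $N\in\N$ and $u_N(t)=\sum_{\lambda_\nu\le N}(e^{it\sqrt{\lambda_\nu}}u_\nu^++e^{-it\sqrt{\lambda_\nu}}u_\nu^-)e_\nu$. Then $\Obs u_N(t)=\sum_{\lambda_\nu\le N}(e^{it\sqrt{\lambda_\nu}}u_\nu^++e^{-it\sqrt{\lambda_\nu}}u_\nu^-)\Obs e_\nu$ is a \emph{finite} sum of exponentials in $t$ with values in $K$; applying $F_k^K(\hk D_t)$ and using $F_k(\hk D_t)e^{irt}=F_k(\hk r)e^{irt}$ one gets
\begin{align*}
F_k^K(\hk D_t)\Obs u_N(t)=\!\!\sum_{\lambda_\nu\le N}\!\!\big(F_k(\hk\sqrt{\lambda_\nu})e^{it\sqrt{\lambda_\nu}}u_\nu^++F_k(-\hk\sqrt{\lambda_\nu})e^{-it\sqrt{\lambda_\nu}}u_\nu^-\big)\Obs e_\nu.
\end{align*}
Since $J_k$ is finite, for $N$ large enough one has $J_k\subset\{\lambda_\nu\le N\}$; by Lemma~\ref{lemma: semicl reduc action on y} and the support of $F_k$, this coincides with $\Obs u_N^k(t)$ exactly.

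Third, I would pass to the limit $N\to\infty$. The spectrally truncated initial data satisfy $(\udl{u}^0_N,\udl{u}^1_N)\to(\udl{u}^0,\udl{u}^1)$ in $D(\Aell^{m_0})\times D(\Aell^{m_0-1/2})$, so $u_N\to u$ in $\Con^0(\R;D(\Aell^{m_0}))$; by \eqref{eq: bound obs op} this yields $\Obs u_N\to\Obs u$ in $L^\infty_{\loc}(\R;K)$, hence in $H_K$. Applying point (1) of Lemma~\ref{lemma: semicl reduc action on L2loc} (with a localizing $\phi$) gives $F_k^K(\hk D_t)\Obs u_N\to F_k^K(\hk D_t)\Obs u$ in $L^\infty_{\loc}(\R;K)$. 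On the other hand, $u_N^k=u^k$ for $N$ large (both are finite sums indexed by $J_k$ with identical coefficients), so $\Obs u_N^k=\Obs u^k$ eventually. Combining these two limits with the equality established in the previous step yields the desired identity.

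The only subtle point is the interchange of $\Obs$ with the infinite series representing $u(t)$, which is why I first reduce to the spectrally truncated case where everything is a finite sum; once that reduction is made, the rest is a routine continuity argument based on $\Obs:D(\Aell^{m_0})\to K$ being bounded and on the continuity of $F_k^K(\hk D_t)$ on $H_K$ proved in Lemma~\ref{lemma: semicl reduc action on L2loc}.
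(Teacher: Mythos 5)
Your proof is correct and follows essentially the same route as the paper's: you introduce the spectrally truncated solutions, compute the identity explicitly on those finite sums using $F_k(\hk D_t)e^{irt}=F_k(\hk r)e^{irt}$ and the support property of $F_k$, note that the right-hand side stabilizes to $\Obs u^k$ once the truncation level absorbs $J_k$, and pass to the limit using the boundedness $\Obs:D(\Aell^{m_0})\to K$ together with the $L^\infty_{\loc}$ continuity of $F_k^K(\hk D_t)$ on $H_K$ from point~(1) of Lemma~\ref{lemma: semicl reduc action on L2loc}. This is the paper's argument, up to cosmetic differences in presentation.
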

\begin{proof}
  We treat the case $k >0$; the proof for the case $k<0$ is
  similar. With Lemma~\ref{lemma: semicl reduc action on y} one has
  $\Obs u^k
    = \sum_{\nu \in J_k}  F_k \big( \hk \sqrt{\lambda_\nu}\big)
    e^{i t \sqrt{\lambda_\nu}} u_\nu^+ \Obs e_\nu$,
  using that the sum is finite.
  For $n\in \N^*$ set
  \begin{align*}
    U_n(t) = \sum_{\lambda_\nu \leq n} \big( e^{i t \sqrt{\lambda_\nu}} u_\nu^+
    + e^{-i t \sqrt{\lambda_\nu}} u_\nu^-\big) e_\nu.
  \end{align*}
  One has 
  \begin{align*}
   \Norm{ U_n(t) -u(t)}{D(\Aell^{m_0})}^2  
    \lesssim \sum_{\lambda_\nu > n} 
    \lambda_\nu^{2m_0}\big( |u_\nu^+|^2
    + | u_\nu^-|^2 \big)^2  = \E_{2 m_0} ( u - U_n), \qquad t \in \R.
\end{align*}
  One has $\E_{2 m_0} ( u - U_n) \to 0$ as $n \to +\infty$ since
  $\big(\lambda_\nu^{m_0} u_\nu^\pm\big)_\nu \in \ell^2(\C)$. Hence, 
  \begin{align*}
   \Norm{ U_n -u }{H_{D(\Aell^{m_0})}} \lesssim 
    \Norm{ U_n -u }{L^\infty(\R; D(\Aell^{m_0})}  \to 0, 
    \ \ \text{as} \ \  n \to +\infty,
\end{align*}
yielding in turn
  \begin{align}
    \label{eq: lemma: semicl filter1}
    F_k^K (\hk D_t) \Obs  U_n
    \ \mathop{\longrightarrow}_{\n \to \infty} \
    F_k^K (\hk D_t) \Obs u \quad
    \text{in}\   L_{\loc}^\infty(\R; K), 
  \end{align}
  by Lemma~\ref{lemma: semicl reduc action on L2loc}.
  As the sum defining $U_n$ is finite one has
   \begin{align*}
     F_k ^K (\hk D_t) \Obs  U_n
     = \sum_{\lambda_\nu \leq n}  F_k(\hk  \sqrt{\lambda_\nu})
     e^{i t \sqrt{\lambda_\nu}} u_\nu^+ \Obs e_\nu,
   \end{align*}
   using the support property of $F_k$ for $k >0$ and that $F_k(\hk D_t) e^{i r t}  = F_k(\hk r)  e^{i r t}$, $r\in \R$; see for
   instance  (18.1.27) in \cite{Hoermander:V3}. One observes that
   $F_k^K (\hk D_t) \Obs  U_n = \Obs u^k$ for $n$ chosen \suff large. The
   limit in \eqref{eq: lemma: semicl filter1} hence gives the result.
\end{proof}

\subsection{Action on solutions to the Schr\"odinger equation}

The counterpart results of Lemmata~\ref{lemma: semicl reduc action on
  y} and \ref{lemma: semicl filter}
for the  Schr\"odinger equation are the following ones.  
%%%%%%%%%%%%%%%%%%%%%%%%
% lemma                %
%%%%%%%%%%%%%%%%%%%%%%%%
\begin{lemma}
  \label{lemma: semicl reduc action on y Schrodinger}
  Let $u(t)$ be a solution to the Schr\"odinger equation~\eqref{eq:
    abstract Schrodinger equation} with $\udl{u}^0 \in D(\Aell^p)$ for
  $p \in \R$.  
  Let $k \in \N^*$. 
  One has  $F_k^{D(\Aell^{p})}(\hk D_t) u (t) \in E^S_k$ and 
  \begin{align}
    \label{eq: semicl reduc action on y Schrodinger}
     F_k^{D(\Aell^{p})}(\hk D_t) u (t)
    &=\sum_{\nu \in J^S_k}
  F_k \big( \hk \lambda_\nu\big)
    e^{i t \lambda_\nu} \udl{u}^0_\nu e_\nu.
  \end{align}
\end{lemma}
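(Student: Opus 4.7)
The plan is to mimic closely the proof of Lemma~\ref{lemma: semicl reduc action on y} for the wave equation, with the simplification that for the Schrödinger case the solution is given by a single exponential series rather than a sum of two dispersive branches, and the relevant norm is time-invariant.

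First, I would verify that $u \in H_{D(\Aell^p)}$ so that $F_k^{D(\Aell^p)}(\hk D_t) u$ is well-defined via Lemma~\ref{lemma: semicl reduc action on L2loc}. This is immediate since $\Norm{u(t)}{D(\Aell^{p})} = \Norm{\udl{u}^0}{D(\Aell^{p})}$ is constant in $t$, so in particular $u \in L^\infty\big(\R; D(\Aell^p)\big) \subset H_{D(\Aell^p)}$.

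Next, to legitimately apply $F_k^{D(\Aell^p)}(\hk D_t)$ term by term on the series \eqref{eq: semicl reduc Schrodinger eq solution}, I would introduce the truncations
\begin{align*}
  U_n(t) = \sum_{\lambda_\nu \leq n} e^{i t \lambda_\nu}\, \udl{u}^0_\nu\, e_\nu,
\end{align*}
and observe that $\Norm{U_n - u}{L^\infty(\R; D(\Aell^p))}^2 = \Norm{U_n(0) - \udl{u}^0}{D(\Aell^p)}^2 = \sum_{\lambda_\nu > n} \lambda_\nu^{2p}|\udl{u}^0_\nu|^2 \to 0$ as $n \to \infty$, since $(\lambda_\nu^p \udl{u}^0_\nu)_\nu \in \ell^2(\C)$. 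This forces $\Norm{U_n - u}{H_{D(\Aell^p)}} \to 0$, and by the continuity statement in Lemma~\ref{lemma: semicl reduc action on L2loc} one deduces
\begin{align*}
  F_k^{D(\Aell^p)}(\hk D_t) U_n \ \mathop{\longrightarrow}_{n \to \infty} \  F_k^{D(\Aell^p)}(\hk D_t) u \quad \text{in}\ L^\infty_{\loc}(\R; D(\Aell^p)).
\end{align*}

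Finally, on the finite sum $U_n$ the Fourier multiplier acts mode by mode using $F_k(\hk D_t) e^{i r t} = F_k(\hk r) e^{i r t}$ (as recalled in (18.1.27) of \cite{Hoermander:V3}), giving
\begin{align*}
  F_k^{D(\Aell^p)}(\hk D_t) U_n(t) = \sum_{\lambda_\nu \leq n} F_k(\hk \lambda_\nu) \, e^{i t \lambda_\nu}\, \udl{u}^0_\nu\, e_\nu.
\end{align*}
Since $\supp F \subset ]\alpha, \alpha^{-1}[$ and $k \geq 1$, the factor $F_k(\hk \lambda_\nu)$ vanishes unless $\alpha \rho^{|k|} < \lambda_\nu < \rho^{|k|}/\alpha$, i.e., unless $\nu \in J^S_k$. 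As $J^S_k$ is finite, for $n$ large enough this sum stabilizes to $\sum_{\nu \in J^S_k} F_k(\hk \lambda_\nu) e^{it\lambda_\nu} \udl{u}^0_\nu e_\nu$, proving \eqref{eq: semicl reduc action on y Schrodinger} and the membership in $E^S_k$. The only delicate point is keeping track of the topology in which one exchanges the multiplier with the series, but this is exactly what the second and third parts of Lemma~\ref{lemma: semicl reduc action on L2loc} are designed to handle.
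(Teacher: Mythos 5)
Your proposal is correct and follows essentially the same route the paper indicates (the paper simply says the Schr\"odinger case follows from the wave-equation Lemma~\ref{lemma: semicl reduc action on y} \emph{mutatis mutandis}; your truncation argument with the $U_n$ is exactly how the paper's proof of Lemma~\ref{lemma: semicl filter} proceeds, and it also underlies the convergence claim in the proof of Lemma~\ref{lemma: semicl reduc action on y}). One small slip in your closing remark: the justification for exchanging the multiplier $F_k^{D(\Aell^p)}(\hk D_t)$ with the series is the continuity $H_{D(\Aell^p)} \to L^\infty_{\loc}\big(\R; D(\Aell^p)\big)$ from the \emph{first} item of Lemma~\ref{lemma: semicl reduc action on L2loc} (bound~\eqref{eq: bound Linf Fk}), which is what you actually invoke in the body of the argument; the second and third items concern summation over $k$ and decay away from the support of a cutoff, and play no role here.
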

%%%%%%%%%%%%%%%%%%%%%%%%
% lemma                %
%%%%%%%%%%%%%%%%%%%%%%%%
\begin{lemma}
  \label{lemma: semicl filter Schrodinger}
  Let $u(t)$ be a solution to \eqref{eq: abstract Schrodinger equation}  with $\udl{u}^0 \in D(\Aell^{m_0})$.
  One has   $\Obs u^k  =  F_k ^{K} (\hk D_t) \Obs u$ in $L^\infty_{\loc}
  (\R; K)$ for $k \in \N^*$ and $u^k(t)=  F_k^{D(\Aell^{m_0})}(\hk D_t) u (t)$.
\end{lemma}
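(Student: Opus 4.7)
The plan is to mirror the proof of Lemma~\ref{lemma: semicl filter}; the argument is in fact slightly simpler here because the Schr\"odinger flow preserves the $D(\Aell^{m_0})$ norm exactly and only a single frequency branch appears, rather than the $u_\nu^\pm$ decomposition of waves.

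First, I apply Lemma~\ref{lemma: semicl reduc action on y Schrodinger} to write $u^k(t) = F_k^{D(\Aell^{m_0})}(\hk D_t) u(t)$ as the \emph{finite} sum $u^k(t) = \sum_{\nu \in J^S_k} F_k(\hk \lambda_\nu) e^{i t \lambda_\nu} \udl{u}^0_\nu e_\nu$. Since $E^S_k \subset D(\Aell^\infty) \subset D(\Obs)$ and the sum has finitely many terms, applying $\Obs$ term by term yields $\Obs u^k(t) = \sum_{\nu \in J^S_k} F_k(\hk \lambda_\nu) e^{i t \lambda_\nu} \udl{u}^0_\nu \Obs e_\nu$.

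Next I approximate $u$ by the truncations $U_n(t) = \sum_{\lambda_\nu \leq n} e^{i t \lambda_\nu} \udl{u}^0_\nu e_\nu$. Because the Schr\"odinger evolution is an isometry on $D(\Aell^{m_0})$, one has $\sup_{t \in \R} \Norm{U_n(t) - u(t)}{D(\Aell^{m_0})}^2 = \sum_{\lambda_\nu > n} \lambda_\nu^{2 m_0} |\udl{u}^0_\nu|^2$, which tends to $0$ as $n \to \infty$ since $\udl{u}^0 \in D(\Aell^{m_0})$. Consequently $\Norm{U_n - u}{H_{D(\Aell^{m_0})}} \lesssim \Norm{U_n - u}{L^\infty(\R; D(\Aell^{m_0}))} \to 0$, and by the continuity bound \eqref{eq: bound obs op} applied pointwise in $t$ one also obtains $\Norm{\Obs U_n - \Obs u}{H_K} \to 0$.

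Finally, the second point of Lemma~\ref{lemma: semicl reduc action on L2loc} (combined with \eqref{eq: bound Linf Fk}) ensures that $F_k^K(\hk D_t)$ is continuous from $H_K$ into $L^\infty_{\loc}(\R; K)$, so $F_k^K(\hk D_t) \Obs U_n \to F_k^K(\hk D_t) \Obs u$ in $L^\infty_{\loc}(\R; K)$. For $n$ chosen large enough that $J^S_k \subset \{\nu : \lambda_\nu \leq n\}$, the identity $F_k(\hk D_t) e^{i r t} = F_k(\hk r) e^{i r t}$ (as in (18.1.27) of \cite{Hoermander:V3}) together with the support property of $F_k$ (which kills every $\nu \notin J^S_k$) gives $F_k^K(\hk D_t) \Obs U_n = \sum_{\nu \in J^S_k} F_k(\hk \lambda_\nu) e^{i t \lambda_\nu} \udl{u}^0_\nu \Obs e_\nu = \Obs u^k$, and passing to the limit yields the claimed equality. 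No serious obstacle is expected: the only slightly delicate step, namely the strong approximation of $u$ by smooth truncations in the norm adapted to the Fourier-multiplier machinery of Section~\ref{sec: Time microlocalization 1}, is here immediate from the conservation of the $D(\Aell^{m_0})$ norm.
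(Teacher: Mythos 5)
Your proof is correct and follows the same approach the paper intends: the paper explicitly notes that Lemmata~\ref{lemma: semicl reduc action on y} and \ref{lemma: semicl filter} adapt \emph{mutatis mutandis} to the Schr\"odinger case, and your argument is exactly that adaptation, using the truncations $U_n$, the conservation of the $D(\Aell^{m_0})$ norm, and the Fourier-multiplier identity $F_k(\hk D_t) e^{irt} = F_k(\hk r) e^{irt}$. One small slip: the continuity of $F_k^K(\hk D_t)$ from $H_K$ into $L^\infty_{\loc}(\R;K)$ is the \emph{first} point of Lemma~\ref{lemma: semicl reduc action on L2loc} (estimate \eqref{eq: bound Linf Fk}), not the second, but this does not affect the substance of the argument.
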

The proof of Lemmata~\ref{lemma: semicl reduc action on
  y} and \ref{lemma: semicl filter} can be adapted {\em mutatis
  mutandis}. 

\bigskip
One sees
that $u^k(t)$ is also solution to the Schr\"odinger  equation and 
\begin{align}
  \label{eq: regularity yk Schrodinger}
   u^k(t)  \in \Con^m\big(\R; D(\Aell^r)\big), \quad \forall m\in
  \N, \ r \in \R,
\end{align}
as the sum is
finite  in \eqref{eq: semicl reduc action on y Schrodinger}.

%%%%%%%%%%%%%%%%%%
% section
%%%%%%%%%%%%%%%%%%
\section{Proof of the main result for waves}

As explained below Theorem~\ref{theorem: main result - wave}, for the
benefit of the reader, we have
chosen to provide a proof for the case $\ell_1 =2 m_0$ and a proof for
the case $\ell_1 \leq 2 m_0$. Even though the second case contains the
first one, the proof is the first case is less technical.

\subsection{Case $\ell_1 =2 m_0$.}
\label{sec: proof ell1 = ell0}
Let $\T = (T+T')/2$ and $\delta^0 = (\T-T)/2 = (T'-\T)/2$. 
Because of the time invariance of the energy, the assumed
semi-classical observation inequality \eqref{eq: semicl obs} reads 
\begin{equation}
  \label{eq: semicl obs B-reformulated}
  \E_{\ell_1}(u^k)
  \leq C \int _{\delta^0}^{\T-\delta^0} \Norm{\Obs u^k(t)}{K}^2dt,
  \qquad (u^k)_{k\in \ZZ} \in B, \ |k| \geq k_0,
\end{equation}
and we aim to prove that, for any $\delta \in ]0, \delta^0]$,
\begin{equation*}
   \E_{\ell_1}(u)
  \leq C' \int_{-\delta}^{\T+\delta} \Norm{\Obs u(t)}{K}^2\, d t 
\end{equation*}
holds for any solution $u$ to the wave equation \eqref{eq: abstract wave
  equation} writen in \eqref{eq: semicl reduc wave eq solution} that
lies in the $\ell_1$-energy level, that is, $\udl{u}^0\in
D(\Aell^{m_0})$ and $\udl{u}^1\in D(\Aell^{m_0-1/2})$ here. 
The simultaneous treatment of $0< \delta \leq \delta^0$ is used for a technical
argument in the proof of Lemma~\ref{lemma: invisible solution} below.

For such a solution $u$, 
one notes that $\Obs u \in H_K$ by \eqref{eq: admissibility condition} and
one has 
\begin{align}
  \label{eq: observation in H}
  \Norm{\Obs u}{H_K}^2
  \lesssim \Norm{\Obs u}{L^\infty(\R; K)}^2
  \lesssim \Norm{u}{L^\infty(\R; D(\Aell^{m_0})}^2  
  \lesssim \E_{\ell_1}( u),
\end{align}
since $\ell_1 = 2 m_0$. 
Let $F$ be chosen as in Lemma~\ref{lemma: semicl reduc choice
  F}. 
With Lemma~\ref{lemma: semicl reduc
  action on y}, for $k \in \ZZ^*$, set
\begin{align}
  \label{eq: chi k y}
  u^k (t)= F_k^{D(\Aell^{\ell_1/2})}(\hk D_t) u(t)
  = \sum_{\nu \in J_k}
  \begin{cases}
    F_k \big( \hk \sqrt{\lambda_\nu}\big)
    e^{i t \sqrt{\lambda_\nu}} u_\nu^+ e_\nu
    & \text{if} \ k>0,\\
     F_k \big( -\hk \sqrt{\lambda_\nu}\big) e^{-i t \sqrt{\lambda_\nu}}
     u_\nu^- e_\nu
      & \text{if} \ k<0.
  \end{cases}
\end{align}
One has $u^k \in E_k$.
With 
the semi-classical observation property~\eqref{eq: semicl obs B-reformulated} one has
\begin{equation}
  \label{eq: semicl obs-bis}
  \E_{\ell_1}(u^k) 
  \lesssim \Norm{\varphi \Obs u^k(t)}{L^2(\R,K)}^2, 
  \qquad \text{for} \ |k| \geq k_0,
\end{equation}
where   $\varphi \in \Cinfc(]0,\T[)$ with $\varphi =1$ on a \nhd of 
$[\delta^0,\T-\delta^0]$.
One has 
\begin{align*}
  \E_{\ell_1}(u^k)
  = \sum_{\nu \in J_k}\lambda_\nu^{\ell_1}
  \begin{cases}
    F_k\big(\hk\sqrt{\lambda_\nu}\big)^2 |u^+_\nu|^2
    & \text{if} \ k>0,\\
  F_k\big(-\hk\sqrt{\lambda_\nu}\big)^2 |u^-_\nu|^2
  & \text{if} \ k<0.
  \end{cases}
\end{align*}
Set $u^0 = \sum_{\lambda_\nu \leq 1}
  \Big( e^{i t \sqrt{\lambda_\nu}} u_\nu^+
    + 
  e^{-i t \sqrt{\lambda_\nu}} u_\nu^-\Big)e_\nu$.
With Lemma~\ref{lemma: semicl reduc choice F} one finds
\begin{align*}
  \E_{\ell_1}(u- u^0)
  &= \E_{\ell_1}(u) -\E_{\ell_1}(u^0)
    =\sum_{\lambda_\nu>1} \lambda_\nu^{\ell_1}
    \big( |u^-_{\nu}|^2 + |u^+_{\nu}|^2\big)\\
  & \lesssim
    \sum_{k \in \ZZ^*} \sum_{\lambda_\nu>1}
    \lambda_\nu^{\ell_1} F_k\big(- \hk \sqrt{\lambda_\nu}\big)^2 |u^-_{\nu}|^2
    +  \sum_{k \in \ZZ^*} \sum_{\lambda_\nu>1}
    \lambda_\nu^{\ell_1} F_k\big( \hk \sqrt{\lambda_\nu}\big)^2 |u^+_{\nu}|^2\\
  & \lesssim
    \sum_{k \in -\N^*} \sum_{\nu \in \N}
    \lambda_\nu^{\ell_1} F_k\big(- \hk \sqrt{\lambda_\nu}\big)^2 |u^-_{\nu}|^2
    +  \sum_{k \in \N^*} \sum_{\nu \in \N}
    \lambda_\nu^{\ell_1} F_k\big( \hk \sqrt{\lambda_\nu}\big)^2 |u^+_{\nu}|^2\\
    & \lesssim \sum_{k \in \ZZ^*}  \E_{\ell_1}(u^k).
\end{align*}
One thus obtains with \eqref{eq: semicl obs-bis} 
\begin{align}
  \label{eq: separation energy}
  \E_{\ell_1}(u) &\lesssim  \E_{\ell_1}(u^0) + \sum_{k \in \ZZ^*}  \E_{\ell_1}(u^k)\\
  &\lesssim  \E_{\ell_1}(u^0) + \sum_{1 \leq |k| < k_1}  \E_{\ell_1}(u^k)
    + \sum_{|k|\geq k_1}\Norm{\varphi \Obs u^k(t)}{L^2(\R;K)}^2,
    \notag
\end{align}
for $k_1 \geq k_0$ to be chosen below.

With  Lemma~\ref{lemma: semicl filter}, one can write
\begin{align}
  \label{eq: semicl reduc  estimation energy}
  \E_{\ell_1}(u)
  &\lesssim  \sum_{0 \leq  |k| < k_1}  \E_{\ell_1} (u^k)
  +  \sum_{|k|\geq k_1}\Norm{\varphi F_k^{K}(\hk D_t) \Obs u}{L^2(\R;K)}^2.
\end{align}
Set $\psi_\delta = 1_{[-\delta,\T+\delta]}$. With the third point of Lemma~\ref{lemma: semicl reduc action on
  L2loc} and \eqref{eq: observation in H} one has
\begin{align*}
  \Norm{\varphi(t) F_k^{K} (\hk D_t) \Obs u}{L^2(\R; K)}^2
  &\lesssim
    \Norm{\varphi(t) F_k^{K} (\hk D_t) (\psi_\delta  \Obs u)}{L^2(\R; K)}^2
    +\hk^{2M} \Norm{\Obs u}{H_K}^2\\
  &\lesssim
    \Norm{\varphi(t) F_k^{K} (\hk D_t) (\psi_\delta  \Obs u)}{L^2(\R; K)}^2
    + \hk^{2M} \E_{\ell_1}(u).
\end{align*}
With the second point of lemma~\ref{lemma: semicl reduc action on
  L2loc} one finds
\begin{align*}
  \sum_{|k| \geq k_1} \Norm{\varphi(t) F_k^{K}(\hk D_t) \Obs u}{L^2(\R; K)}^2
  &\lesssim  \Norm{\psi_\delta  \Obs u}{L^2(\R; K)}^2
    + \sum_{|k| \geq k_1}h_{k}^{2M} \E_{\ell_1}(u)\\
  &\lesssim   \Norm{\psi_\delta  \Obs u}{L^2(\R; K)}^2
    +h_{k_1}^{2M} \E_{\ell_1}(u).
\end{align*}
using that $\hk = \rho^{-|k|}$ with $\rho>1$.
With \eqref{eq: semicl reduc  estimation energy} one finds
\begin{align*}
 \E_{\ell_1}(u)
  &\lesssim  \sum_{0 \leq  |k| < k_1}  \E_{\ell_1}(u^k)
    +  \Norm{\psi_\delta  \Obs u}{L^2(\R; K)}^2
    + h_{k_1}^{2M} \E_{\ell_1}(u).
\end{align*}
For $k_1\geq k_0$ chosen \suff large one obtains
\begin{align}
  \label{eq: semicl reduc  estimation energy2}
  \E_{\ell_1}(u)
  &\lesssim  \sum_{0 \leq  |k| < k_1}  \E_{\ell_1}(u^k)
  +  \Norm{\psi_\delta  \Obs u}{L^2(\R; K)}^2.
\end{align}
To remove the first term  on the \rhs of \eqref{eq: semicl reduc
  estimation energy2} we shall use the following lemma that states
that only the trivial solution is invisible for the observation operator
$\Obs$. 
%%%%%%%%%%%%%%%%%%%%%%%%
% lemma                %
%%%%%%%%%%%%%%%%%%%%%%%%
\begin{lemma}[absence of invisible waves]
  \label{lemma: invisible solution}
  Let $u \in \cap_k \Con^k\big(\R; D(\Aell^{m_0-k/2})\big)$
be solution to
  \eqref{eq: abstract wave equation} and such that $\psi_\delta  \Obs u =0$.
  Then $u=0$.
\end{lemma}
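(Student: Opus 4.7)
The plan is to show that the space of \emph{invisible} smooth wave solutions is finite dimensional for any fixed observation window, then use a time-translation argument to trap $u$ inside a finite-dimensional $\d_t$-stable subspace, forcing $u$ to be a finite sum of eigenmodes, and finally invoke Assumption~\ref{assumpt: unique continuation}. This follows the classical Lebeau-style line of argument.

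For the finite dimensionality, introduce, for each $\delta' \in (0, \delta^0]$, the subspace $N_{\delta'}$ of smooth solutions $v \in \cap_k \Con^k(\R; D(\Aell^{m_0 - k/2}))$ with $\psi_{\delta'} \Obs v = 0$. Since estimate \eqref{eq: semicl reduc  estimation energy2} is valid for every such $\delta'$, specializing it to $v \in N_{\delta'}$ kills the second term on the right and leaves $\E_{\ell_1}(v) \lesssim \sum_{0 \leq |k| < k_1} \E_{\ell_1}(v^k)$. As each $E_k$ with $|k| < k_1$ and the low-frequency space $\Span\{e_\nu : \lambda_\nu \leq 1\}$ are finite dimensional, the linear map $v \mapsto (v^k)_{0 \leq |k| < k_1}$ is an injection of $N_{\delta'}$ into a finite-dimensional space; so $\dim N_{\delta'} < \infty$. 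This is exactly why the proof of the main theorem was written for a family of $\delta$'s.

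Next, set $W = \Span\{u(\cdot + \tau) : |\tau| < \delta/2\}$. For $|\tau| < \delta/2$, the translate $u(\cdot + \tau)$ is invisible on $[-\delta/2, \T+\delta/2]$, so $W \subset N_{\delta/2}$ is finite dimensional. I then check that $W$ is stable under $\d_t$: the difference quotient $(u(\cdot + \tau + h) - u(\cdot + \tau))/h$ lies in $W$ whenever $|h|$ is small, and since all Hausdorff vector topologies coincide on the finite-dimensional space $W$, the convergence of this quotient as $h \to 0$ (which is automatic in, say, $\Con^0(\R; D(\Aell^{m_0-1/2}))$) takes place in $W$; hence $\d_t u(\cdot + \tau) \in W$ and by linearity $\d_t W \subset W$. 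The endomorphism $\d_t|_W$ therefore has a minimal polynomial $P$, giving $P(\d_t) u = 0$.

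Expanding $u$ as in \eqref{eq: semicl reduc wave eq solution} and using the orthogonality of the eigenfunctions together with the linear independence of $e^{\pm i t \sqrt{\lambda_\nu}}$, the identity $P(\d_t) u = 0$ yields $P(\pm i\sqrt{\lambda_\nu}) u^\pm_\nu = 0$ for every $\nu$. As $P$ has only finitely many roots and each eigenvalue has finite multiplicity, only finitely many modes contribute: $u(t) = \sum_{\nu \in \Lambda}(e^{it\sqrt{\lambda_\nu}} u^+_\nu + e^{-it\sqrt{\lambda_\nu}} u^-_\nu) e_\nu$ with $\Lambda$ finite. Then $\Obs u(t)$ is a finite trigonometric polynomial with values in $K$ vanishing on $(-\delta, \T+\delta)$, and linear independence of distinct exponentials over an interval forces, for each $\omega \in \{\sqrt{\lambda_\nu} : \nu \in \Lambda\}$, the vanishing of $\Obs\bigl(\sum_{\sqrt{\lambda_\nu} = \omega} u^\pm_\nu e_\nu\bigr)$. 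Each of these sums is an element of the $\omega^2$-eigenspace of $\Aell$ annihilated by $\Obs$, so Assumption~\ref{assumpt: unique continuation} forces it to vanish; linear independence of the $e_\nu$ then gives $u^\pm_\nu = 0$ for all $\nu$, hence $u = 0$.

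The main obstacle is the $\d_t$-stability step: a priori $\d_t u$ lies only in $\Con^0(\R; D(\Aell^{m_0-1/2}))$, which is not contained in $D(\Obs) = D(\Aell^{m_0})$, so one cannot naively write ``$\Obs \d_t u = 0$''. The combination of the time-smoothness of $u$ assumed in the lemma with the equivalence of topologies on the finite-dimensional $W$ is precisely what bypasses this issue and identifies $\d_t u$ as a genuine element of $W$.
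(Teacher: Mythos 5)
Your proof is correct, and while it follows the same Lebeau-style architecture (finite dimensionality of the invisible space, $\d_t$-stability, reduction to eigenmodes, unique continuation), it differs from the paper's proof in each of the three steps, and in a way that is worth recording. For finite dimensionality, you use that $v \mapsto (v^k)_{0 \leq |k| < k_1}$ is injective on the invisible space into a finite-dimensional target, whereas the paper invokes compactness of the finite-rank maps $u \mapsto u^k$ together with Riesz's theorem; your version is the more elementary one and says exactly the same thing. For $\d_t$-stability, the paper works directly with difference quotients $w_\eps$, proves they form a Cauchy family in the $\E_{\ell_1}$-norm by going through the filtered pieces $w_\eps^k$, passes to the limit in $\mathscr N_{\delta'}$, then bootstraps regularity by iterating this argument; you instead form $W = \Span\{u(\cdot+\tau)\}$, observe $W \subset N_{\delta/2}$ is finite-dimensional (this is precisely where the uniformity of~\eqref{eq: semicl reduc  estimation energy2} in $\delta$ is consumed, as you correctly flagged), and then the closedness of a finite-dimensional subspace of a Hausdorff TVS converts the soft convergence of the difference quotient in $\Con^0(\R; D(\Aell^{m_0-1/2}))$ into convergence inside $W$. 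This is a genuine shortcut: since every element of $W$ is a finite linear combination of translates of $u$ it automatically lies in $\cap_k\Con^k(\R; D(\Aell^{m_0-k/2}))$ and is invisible, so the regularity bootstrap in the paper's proof of the claim~\eqref{eq: claim N} becomes unnecessary. Finally, the paper exhibits an eigenvector of $\d_t$ on $\mathscr N_\delta$ and derives a contradiction; you apply the minimal polynomial of $\d_t|_W$ to $u$, deduce that $u$ is a finite sum of eigenmodes, and then use the linear independence of distinct exponentials on an interval to isolate, for each frequency, an element of the corresponding $\Aell$-eigenspace annihilated by $\Obs$, which Assumption~\ref{assumpt: unique continuation} forces to vanish. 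Both conclusions use the unique-continuation hypothesis at the same point and are equivalent in substance, but your route produces $u = 0$ directly rather than by contradiction, and makes explicit that the invisible solution is a trigonometric polynomial. In short: same strategy, cleaner mechanics; your handling of the $\d_t$-stability step in particular is a legitimate simplification of the paper's argument.
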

Recall that writing $\psi_\delta  \Obs u =0$ makes sense by \eqref{eq: observation in H}.
%%%% proof of lemma
\begin{proof}
  For $0< \delta\leq \delta^0$ as above, set 
  \begin{align*}
    \mathscr N_\delta = \big\{ u \in \cap_k \Con^k\big(\R;
    D(\Aell^{m_0-k/2})\big); \ u \ \text{solution to} \ \eqref{eq: abstract
    wave equation} 
    \ \text{and} \ \Obs u(t) =0 \ 
    \text{if} \ t\in ]-\delta,\T+\delta[
    \big\},
  \end{align*}
  that is, 
  the space of invisible solutions in the sense of the
  observation operator $\psi_\delta \Obs$.
  We equip $\mathscr N_\delta$ with the norm associated with the energy
  $\E_{\ell_1}$. With \eqref{eq: semicl reduc  estimation energy2} one has
  \begin{align}
    \label{eq: energy estimate N}
    \E_{\ell_1}(u)
    \lesssim  \sum_{0 \leq  |k| < k_1} \E_{\ell_1}(u^k)
    \ \ \text{on} \ \mathscr N_\delta, \quad 0< \delta\leq \delta^0.
  \end{align}
  As the maps $u \mapsto
  u^k$ have a finite rank, they are compact. With \eqref{eq: energy
    estimate N} it follows that $\mathscr
  N_\delta$ has a compact unit ball and is thus finite dimensional by
  the Riesz theorem.

  We claim that
  \begin{align}
    \label{eq: claim N}
    u \in \mathscr N_\delta
    \ \ \imp \ \
    u \in \bigcap_{m, r \in \N}
    \Con^r\big(\R; D(\Aell^{m})\big)
    \ \ \text{and} \ \ \d_t u \in \mathscr N_\delta.
  \end{align}
  The finite
  dimensional space $\mathscr N_\delta$ is thus stable under the
  action of the operator $\d_t$.  Consequently this operator has an
  eigenvector $\mathsf v \in \mathscr N_\delta$ with associated eigenvalue $\mu$. One finds
  $\Aell \mathsf v = \d_t^2 \mathsf v = \mu^2 \mathsf v$ meaning that
  $\mathsf v(t)$ is an eigenfunction for $\Aell$ for all $t \in  \R$. As $\Obs \mathsf v(t) =0$ if $t \in  ]-\delta,
  \T+\delta[$, with the
  unique continuation Assumption~\ref{assumpt: unique continuation}
  one obtains $\mathsf v(t)=0$ for all $t \in ]-\delta,
  \T+\delta[$. Hence, $\mathsf v=0$ since the energy of this solution is zero and one
  concludes that  $\mathscr N_\delta =\{0\}$.

  \medskip We now prove our claim~\eqref{eq: claim N}. Let $0 < \delta'
  < \delta$ and note that $\mathscr N_\delta \subset \mathscr
  N_{\delta'}$. Let $u \in \mathscr N_\delta$. For $0< \eps < \delta-
  \delta'$, observe that 
  \begin{align*}
    w_\eps(t) = \frac{u(t+\eps) - u(t)}{\eps} \in  \mathscr N_{\delta'},
  \end{align*}
 On the one hand, as $u \in \Con^1\big(\R; D(\Aell^{m_0-1/2})\big)$, one has
  \begin{align}
    \label{eq: time derivative N}
    w_\eps(t) \ \to\ \d_t u(t) \quad \text{in} \  D(\Aell^{m_0 -1/2}),
    \quad \forall t\in \R.
  \end{align}
  On the other hand, if one applies the operator $F_k^{D(\Aell^{m_0})}(\hk D_t)$,
  one has  
  \begin{align*}
    w^k_\eps(t) = \frac{u^k(t+\eps) - u^k(t)}{\eps}.
  \end{align*}
  Note indeed that  $F_k^{D(\Aell^{m_0})}(\hk D_t) (u(.+\eps)) (t) =
  F_k^{D(\Aell^{m_0})}(\hk D_t) u (t+\eps) = u^k(t+\eps)$ since $
  F_k^{D(\Aell^{m_0})}(\hk D_t)$ is a simple Fourier multiplier.
  Since $u^k
  \in \cap_{m,r \in \N}\Con^r \big(\R, D(\Aell^m)\big)$,  one finds
  that, for any $k$, $w^k_\eps(t)$ converges to $\d_t u
  ^k(t)$ in $\Con^{r} \big(J, D(\Aell^{m})\big)$,  for any $r, m \in
  \N$ and any bounded interval $J$. 
  Hence, recalling that $w^k_\eps$ and $\d_t u^k$ are
  solutions to the wave equation,  one finds $w^k_\eps \to \d_t u ^k$ in the norm associated with the
  $\ell_1$-energy. With \eqref{eq: energy estimate N} one finds that
  $(w_\eps)_\eps$ is of Cauchy type in $\mathscr N_{\delta'}$ for this latter norm, as $\eps \to 0$. It thus converges to some $w
  \in \mathscr N_{\delta'}$, as $\mathscr N_{\delta'}$ is complete
  since finite dimensional. Then, one has
  \begin{align*}
    w_\eps(t) \ \to w(t) \quad \text{in} \  D(\Aell^{m_0}), \quad \
    \text{uniformly for}\ t\in \text{any bounded interval of}\ \R,
  \end{align*}
  yielding $w = \d_t u$ by \eqref{eq: time derivative N}. Consequently $\d_t u\in \mathscr
  N_{\delta'}$ meaning $\d_t u \in \cap_k \Con^k\big(\R;
    D(\Aell^{m_0-k/2})\big)$ and $\Obs \d_t u (t) = 0$ for $t
    \in ]-\delta', \T+\delta'[$. Our choice of $\delta' \in
    ]0,\delta[$ is however arbitrary. Hence, one obtains $\Obs \d_t u (t) = 0$ for $t
    \in ]-\delta, \T+\delta[$, meaning that $\d_t u \in  \mathscr
  N_{\delta}$.

  Iterating the argument, one obtains
  $\Aell u = - \d_t^2 u \in  \mathscr
  N_{\delta}$, thus $\Aell u \in \cap_k \Con^k\big(\R;
    D(\Aell^{m_0-k/2})\big)$ implying 
    $u \in \cap_k \Con^k\big(\R;
    D(\Aell^{m_0+1-k/2})\big)$. Iterations give 
    $u \in \cap_{r, m \in \N} \Con^r\big(\R;
    D(\Aell^{m})\big)$.
\end{proof}

\medskip
We now conclude the proof of Theorem~\ref{theorem: main result - wave}
by a classical  argument by contradiction,
assuming that the observation inequality 
\begin{align}
  \label{eq: semicl reduc final observation}
  \E_{\ell_1}(u)
  &\lesssim  \Norm{\psi_\delta  \Obs u}{L^2(\R; K)}^2
\end{align}
does not hold.  Then, there exists a sequence of initial conditions
$(\udl{u}^{0,n}, \udl{u}^{1,n}) \in D(\Aell^{m_0}) \times
D(\Aell^{m_0-1/2})$ with associated solutions
$(u_n)_{n\in \N}$ to the wave equation such that $\E_{\ell_1}(u_n)=1$
and $\Norm{\psi_\delta  \Obs u_n}{L^2(\R; K)} \to 0$.  Some subsequence,
that we also write $(\udl{u}^{0,n}, \udl{u}^{1,n})$ for simplicity,
weakly converges to some
$(\udl{u}^0, \udl{u}^1) \in D(\Aell^{m_0}) \times
D(\Aell^{m_0-1/2})$. Associated with $(\udl{u}^0,\udl{u}^1) $ is
a solution $u$, also in the $\ell_1$-energy level, 
and $u_{n}$ converges weakly to $u$ in
$L^2\big(-\delta,\T+\delta; D(\Aell^{m_0})\big) \cap H^1\big(-\delta,\T+\delta;
D(\Aell^{m_0-1/2})\big)$. Moreover one has $\psi_\delta  \Obs u=0$. In fact,
one considers
$\tilde{\Obs}: D(\Aell^{m_0}) \times D(\Aell^{m_0-1/2}) \to
H_K$ with $\tilde{\Obs}(\udl{v}^0,\udl{v}^1) = \psi_\delta  \Obs v$
where $v$ is the linear wave with initial conditions $\udl{v}^0$ and
$\udl{v}^1$ as given by \eqref{eq: semicl reduc wave eq
  solution}. With~\eqref{eq: observation in H} the map
$\tilde{\Obs}$ is continuous. It is thus also continuous for the weak
topologies; see for instance \cite[Proposition 35.8]{Treves:67}. Since
$\tilde{\Obs}(\udl{u}^{0,n}, \udl{u}^{1,n})$ converges strongly to
$0$, and thus also weakly, this gives
$\tilde{\Obs}(\udl{u}^0, \udl{u}^1) =0$, that is, $\psi_\delta  \Obs u=0$.
With lemma~\ref{lemma: invisible solution} one concludes that
$u=0$, and thus $\udl{u}^0=\udl{u}^1=0$.

As above, for a linear wave  $v$ with initial conditions $(\udl{v}^0, \udl{v}^1)\in D(\Aell^{m_0}) \times D(\Aell^{m_0-1/2})$,
one observes that $(\udl{v}^0, \udl{v}^1) \mapsto v^k (t) =
F_k^{D(\Aell^{m_0})} (\hk D_t) v(t)$ is compact since with a finite
dimensional range; see the expression in Lemma~\ref{lemma: semicl reduc action on y}. As one has $(\udl{u}^0_n, \udl{u}^1_n)
\rightharpoonup  (0, 0)$, one obtains that $u_n^k$ converges strongly to $0$ in the
norm given by the $\E_{\ell_1}$-energy, for $0 \leq  |k| < k_1$.
Here, one thus obtains
\begin{align*}
  \lim_{n \to \infty}  \sum_{0 \leq  |k| < k_1}  \E_{\ell_1}(u_{n}^k) =0.
\end{align*}
Estimate~\eqref{eq: semicl
  reduc  estimation energy2} applied to $u_n$ thus leads to a
contradiction since both terms on the \rhs converge to zero and the
\lhs is equal to $1$. This concludes the contradiction argument and 
the proof of Theorem~\ref{theorem: main result - wave} in the case $\ell_1 =
2 m_0$.
\hfill \qedsymbol \endproof

%%%%%%%%%%%%%%%%%%
% sub-section
%%%%%%%%%%%%%%%%%%
\subsection{Refined time-microlocalization estimates}
\label{sec: Refined time-microlocalization estimates}
Here, we consider a solution $u(t)$ to the abstract wave equation \eqref{eq: abstract
  wave equation} with 
\begin{align*}
  \udl{u}^0 \in  D(\Aell^{m})
  \ \ \text{and} \ \  
  \udl{u}^1  \in D(\Aell^{m-1/2}) ,
\end{align*}
for some $m \in \R$. 
Then, $u(t)$ lies in the $(2m)$-energy level and $u \in \cap_k \Con^k(\R; D(\Aell^{m-k/2})$. 

Let $F \in \Cinfc(]\alpha, \alpha^{-1}[)$ be as given by Lemma~\ref{lemma: semicl
  reduc choice F}. Consider $\tF \in \Cinfc(]\alpha, \alpha^{-1}[)$ such that $\tF=1$
in a \nhd of $\supp(F)$. A first result
we shall use is the following one.
%%%%%%%%%%%%%%%%%%%%%%%%
% lemma                %
%%%%%%%%%%%%%%%%%%%%%%%%
\begin{lemma}
  \label{lemma: refined estimate 1}
   Let $k \in \ZZ^*$.  
   Let $\ell\in \R$. There exists $C= C_{m, \ell}>0$ such that 
      \begin{align}
        \label{eq: refined estimate 1}
        \bigNorm{\tF_k^{D(\Aell^m)}(\hk D_t)   u}{H_{D(\Aell^m)}}^2
        \leq C \hk^{2(\ell-2m)}\E_\ell(u).
      \end{align}
\end{lemma}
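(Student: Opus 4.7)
The plan is to apply the Fourier multiplier $\tF_k(\hk D_t)$ directly to the spectral expansion~\eqref{eq: semicl reduc wave eq solution} of $u$, observe that the result is still a solution of the wave equation with frequencies confined to a dyadic shell around $\hk^{-1}$, and then convert the $H_{D(\Aell^m)}$-norm into an energy through~\eqref{eq: semicl reduc energy 4}. The final comparison with $\E_\ell(u)$ is then purely algebraic via the support property of $\tF$.

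First I would use the identity $\tF_k(\hk D_t)e^{irt} = \tF_k(\hk r)e^{irt}$, exactly as in the proof of Lemma~\ref{lemma: semicl reduc action on y}, to obtain
\begin{align*}
  w(t) := \tF_k^{D(\Aell^m)}(\hk D_t) u(t)
  = \sum_{\nu \in \N} \Big( \tF_k\big(\hk \sqrt{\lambda_\nu}\big)\, e^{it\sqrt{\lambda_\nu}} u_\nu^+ + \tF_k\big(-\hk\sqrt{\lambda_\nu}\big)\, e^{-it\sqrt{\lambda_\nu}} u_\nu^- \Big) e_\nu.
\end{align*}
Since $\supp(\tF) \subset\, ]\alpha, \alpha^{-1}[$, only indices $\nu$ with $\sqrt{\lambda_\nu}$ in the (slightly enlarged) dyadic shell around $\hk^{-1}$ contribute, and the sum is actually finite. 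Hence $w$ is again a solution of~\eqref{eq: abstract wave equation}, lying in all energy levels, with
\begin{align*}
  \E_{2m}(w) = \sum_{\nu \in \N} \lambda_\nu^{2m}\Big(\tF_k\big(\hk\sqrt{\lambda_\nu}\big)^2\, |u_\nu^+|^2 + \tF_k\big(-\hk\sqrt{\lambda_\nu}\big)^2\, |u_\nu^-|^2\Big).
\end{align*}

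Next I would apply the time-averaged energy identity~\eqref{eq: semicl reduc energy 4} to $w$ with $s=2m$ on $[t_1,t_2] = I_\indj$ (of length $\T$), discarding the nonnegative $\d_t w$ contribution, to obtain
\begin{align*}
  \int_{I_\indj}\Norm{w(t)}{D(\Aell^m)}^2 \, dt \leq 2\T\, \E_{2m}(w),
\end{align*}
uniformly in $\indj \in \ZZ$. Taking the supremum over $\indj$ yields $\Norm{w}{H_{D(\Aell^m)}}^2 \leq 2\T\, \E_{2m}(w)$.

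Finally I would compare $\E_{2m}(w)$ with $\E_\ell(u)$ using the support condition, which forces $\alpha \leq \hk\sqrt{\lambda_\nu} \leq \alpha^{-1}$ in every nonvanishing term. Writing $\lambda_\nu^{2m} = (\hk\sqrt{\lambda_\nu})^{2(2m-\ell)}\, \hk^{2(\ell-2m)}\, \lambda_\nu^\ell$, the first factor is bounded above by $\max\big(\alpha^{2(2m-\ell)}, \alpha^{-2(2m-\ell)}\big)$ regardless of the sign of $2m-\ell$. Substituting into the expression for $\E_{2m}(w)$ and using $\Norm{\tF}{L^\infty}^2 < \infty$ gives $\E_{2m}(w) \leq C_{m,\ell}\, \hk^{2(\ell-2m)}\, \E_\ell(u)$, hence~\eqref{eq: refined estimate 1}. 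The argument has no real obstacle: the dyadic localization built into $\hk$ and $\supp(\tF)$ is precisely designed to trade a factor of $\lambda_\nu^{2m-\ell}$ for $\hk^{2(\ell-2m)}$.
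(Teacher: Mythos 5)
Your proposal is correct and follows essentially the same route as the paper: apply the Fourier multiplier to the spectral expansion of $u$, observe the resulting frequency localization in the dyadic shell $\alpha \leq \hk\sqrt{\lambda_\nu}\leq \alpha^{-1}$, and trade the factor $\lambda_\nu^{2m}$ for $\hk^{2(\ell-2m)}\lambda_\nu^{\ell}$ using the support constraint. The only cosmetic difference is that the paper invokes Lemma~\ref{lemma: norm dt uk} to package that algebraic exchange (and passes from a pointwise-in-$t$ bound on $\Norm{\tilde u^k(t)}{D(\Aell^m)}$ to the $H_{D(\Aell^m)}$-norm directly), whereas you spell the exponent manipulation out and route through the time-averaged energy identity \eqref{eq: semicl reduc energy 4} — both yield the same estimate with the same constants.
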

The definition of the Fourier multiplier $\tF_k^{D(\Aell^m)}(\hk D_t)$ is as in the beginning
of Section~\ref{sec: Time microlocalization 1} for $F_k^{D(\Aell^m)}(\hk D_t)$.
Combined with the  third item of Lemma~\ref{lemma: semicl reduc action
  on L2loc} one has the following corollary.
%%%%%%%%%%%%%%%%%%%%%%%%
% corollary              %
%%%%%%%%%%%%%%%%%%%%%%%%
\begin{corollary}
  \label{cor: refined estimate 1}
  If $\varphi \in \Cinfc(]0,\T[)$ and $\psi \in L^\infty(\R)$
     is such that $\psi =1$ in a \nhd of $\ovl{I_0}$, then for any $M\geq 1$ and
      $\ell \in \R$ there exists $C= C_{M,m, \ell}>0$ such that 
      \begin{align}
        \label{eq: cor refined estimate 1}
        \bigNorm{\varphi F_k^{D(\Aell^m)}(\hk D_t) 
        (1- \psi) \tF_k^{D(\Aell^m)}(\hk D_t)   u}{L^2(\R; D(\Aell^m))}^2
        \leq C \hk^M \E_\ell(u).
      \end{align}
\end{corollary}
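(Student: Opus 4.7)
The corollary should follow by a direct composition of Lemma~\ref{lemma: refined estimate 1} with the third item of Lemma~\ref{lemma: semicl reduc action on L2loc}. My plan is to introduce the shorthand $w := \tF_k^{D(\Aell^m)}(\hk D_t)\, u$, viewed as an element of $L^2_{\loc}(\R; D(\Aell^m))$, and to observe that the quantity to be estimated is nothing but $\varphi\, F_k^{D(\Aell^m)}(\hk D_t)\bigl((1-\psi) w\bigr)$, a structure to which part (3) of Lemma~\ref{lemma: semicl reduc action on L2loc} applies directly with $\H = D(\Aell^m)$.

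First, I would control $w$ in the $H_{D(\Aell^m)}$-norm via Lemma~\ref{lemma: refined estimate 1} applied to the solution $u$ at the given energy level $\ell$, which yields $\Norm{w}{H_{D(\Aell^m)}}^2 \leq C_{m,\ell}\, \hk^{2(\ell - 2m)}\, \E_\ell(u)$. Next, since $\psi \equiv 1$ on a neighborhood of $\ovl{I_0}$, and in particular on $I_0$, the hypothesis of part (3) of Lemma~\ref{lemma: semicl reduc action on L2loc} is met. Applying that lemma with some integer $M' \geq 1$ yet to be chosen yields $\Norm{\varphi\, F_k^{D(\Aell^m)}(\hk D_t)\bigl((1-\psi) w\bigr)}{L^2(\R; D(\Aell^m))} \leq C_{M'}\, \hk^{M'}\, \Norm{w}{H_{D(\Aell^m)}}$. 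Squaring and combining the two inequalities produces an upper bound of the form $C_{M',m,\ell}\, \hk^{2M' + 2(\ell - 2m)}\, \E_\ell(u)$.

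The remaining step is bookkeeping on powers of $\hk$. Given a prescribed $M \geq 1$, one selects $M' \geq \max\bigl(1,\, M/2 + 2m - \ell\bigr)$, so that $2M' + 2(\ell - 2m) \geq M$, and \eqref{eq: cor refined estimate 1} follows with a constant depending only on $M$, $m$, and $\ell$. The only delicate point, which is not really an obstacle, is that the exponent $2(\ell - 2m)$ coming from Lemma~\ref{lemma: refined estimate 1} may be negative when $\ell < 2m$, producing a negative power of $\hk$; this potential loss is absorbed harmlessly because the decay exponent $M'$ in part (3) of Lemma~\ref{lemma: semicl reduc action on L2loc} can be chosen arbitrarily large.
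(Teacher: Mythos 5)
Your proof is correct and is exactly the intended argument: the paper simply states that the corollary follows by combining Lemma~\ref{lemma: refined estimate 1} with the third item of Lemma~\ref{lemma: semicl reduc action on L2loc}, which is precisely what you carry out, including the correct bookkeeping to absorb the possibly negative exponent $2(\ell-2m)$ by choosing $M'$ large and using $\hk<1$.
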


\medskip
%%%% proof of lemma
\begin{proof}[Proof of Lemma~\ref{lemma: refined estimate 1}]
  We consider the case $k >0$. The case $k<0$ is treated similarly.
  With Lemma~\ref{lemma: semicl reduc action on y} one has 
  \begin{align*}
    \tilde{u}^k (t)
    &=\sum_{\nu \in J_k}
      \tF_k \big( \hk \sqrt{\lambda_\nu}\big)
    e^{i t \sqrt{\lambda_\nu}} u_\nu^+ e_\nu \in E_k \subset D(\Aell^\infty).
  \end{align*}
  By Lemma~\ref{lemma: norm dt uk} one has 
  \begin{align*}
    \hk^{4m} \Norm{\tilde{u}^k (t)}{D(\Aell^m)}^2 
    &\eqsim \hk^{2\ell}\Norm{\tilde{u}^k (t)}{D(\Aell^{\ell/2})}^2 \\
    &\eqsim \hk^{2\ell} \sum_{\nu \in \tJ_k} \lambda_\nu^\ell       
      F_k \big( \hk\sqrt{\lambda_\nu}\big)^2
       \big| u_\nu^+\big|^2\\
    & \lesssim \hk^{2\ell} \sum_{\nu \in \tJ_k} \lambda_\nu^\ell       \big| u_\nu^+\big|^2,
  \end{align*}
  using that $F_k$  is a bounded function since
  compactly supported.
 This gives 
  \begin{align*}
    \Norm{\tilde{u}^k (t)}{D(\Aell^m)}^2 
    \lesssim \hk^{2(\ell-2m)}  \E_\ell(u).
  \end{align*}
  The result follows from the definition of $\Norm{.}{H_{D(\Aell^m)}}$
  in \eqref{eq: def H - abstract arg}.
\end{proof}

\medskip
A second important result is given by the following lemma. 
%%%%%%%%%%%%%%%%%%%%%%%%
% lemma                %
%%%%%%%%%%%%%%%%%%%%%%%%
\begin{lemma}
  \label{lemma: second term time microlocalization}
  Let $\varphi \in \Cinfc(]0,\T[)$ and $\psi \in \Cinfc(\R)$
     be such that $\psi =1$ in a \nhd of $\ovl{I_0}$, then for any $M\geq 1$ and
      $\ell \in \R$ there exists $C= C_{M,m, \ell}>0$ such that 
      \begin{align}
        \label{eq: second term- refined}
        \bigNorm{\varphi F_k^{D(\Aell^m)}(\hk D_t) 
        (1- \psi) \big(\Id -\tF_k^{D(\Aell^m)}(\hk D_t)\big)   u}{L^2(\R; D(\Aell^m))}^2
        \leq C \hk^M \E_\ell(u).
      \end{align}
\end{lemma}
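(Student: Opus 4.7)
My plan is to reduce the estimate to a direct spectral computation, exploiting the algebraic identity $F_k(\tau)(1-\tF_k(\tau)) \equiv 0$ (which holds since $\tF \equiv 1$ on a neighborhood of $\supp F$). Treating the case $k > 0$ (the case $k < 0$ is symmetric with the roles of $\phi_\nu^\pm$ exchanged), I expand $u = \sum_\nu(\phi_\nu^+ u_\nu^+ + \phi_\nu^- u_\nu^-)e_\nu$ with $\phi_\nu^\pm(t) = e^{\pm it\sqrt{\lambda_\nu}}$. Since $\supp F \subset \R_+^*$, one has $F_k(\hk D_t)\phi_\nu^- = 0$ and $F_k(\hk D_t)\phi_\nu^+ = F_k(\hk\sqrt{\lambda_\nu})\phi_\nu^+$, and analogously for $\tF_k$. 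Setting $g_\nu = 1 - \tF_k(\hk\sqrt{\lambda_\nu})$, the cancellation $g_\nu F_k(\hk\sqrt{\lambda_\nu}) = 0$ collapses the operator to
\begin{align*}
F_k^{D(\Aell^m)}(\hk D_t)(1-\psi)\bigl(\Id-\tF_k^{D(\Aell^m)}(\hk D_t)\bigr)u
= -\sum_{\nu}\bigl[g_\nu u_\nu^+ A_\nu^+(t) + u_\nu^- A_\nu^-(t)\bigr]e_\nu,
\end{align*}
where $A_\nu^\pm(t) := F_k(\hk D_t)(\psi\phi_\nu^\pm)(t)$ are scalar-valued functions of $t$.

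To bound $A_\nu^\pm(t)$ pointwise I use the explicit Fourier representation $A_\nu^\pm(t) = \frac{1}{2\pi}\int e^{it\tau}F_k(\hk\tau)\hat\psi(\tau\mp\sqrt{\lambda_\nu})\,d\tau$ together with the Schwartz decay of $\hat\psi$ (available since $\psi \in \Cinfc$). On $\supp F_k(\hk\cdot) \subset (\alpha/\hk,\alpha^{-1}/\hk)$, the phase argument is kept away from zero: for $A_\nu^-$, $\tau+\sqrt{\lambda_\nu} \geq \max(\alpha/\hk,\sqrt{\lambda_\nu})$; and for $g_\nu A_\nu^+$, the condition $g_\nu \neq 0$ forces $\hk\sqrt{\lambda_\nu}$ outside a fixed neighborhood of $\supp F$, hence $|\tau-\sqrt{\lambda_\nu}| \gtrsim 1/\hk + \sqrt{\lambda_\nu}$. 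These separations yield, for any $N \geq 1$, the uniform bound
\begin{align*}
|g_\nu A_\nu^+(t)| + |A_\nu^-(t)| \leq C_N\, \hk^{N-1}(1+\hk\sqrt{\lambda_\nu})^{-N}.
\end{align*}

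Finally I sum over $\nu$ against $\lambda_\nu^{2m}(|u_\nu^+|^2 + |u_\nu^-|^2)$ to obtain the $D(\Aell^m)$ norm of the collapsed expression. Writing $y_\nu = \hk\sqrt{\lambda_\nu}$, the per-$\nu$ factor becomes $C_N\hk^{2N-2-4m+2\ell}\lambda_\nu^\ell\, y_\nu^{4m-2\ell}(1+y_\nu)^{-2N}$. When $4m \geq 2\ell$, the function $y^{4m-2\ell}(1+y)^{-2N}$ is bounded on $(0,\infty)$ provided $N \geq 2m-\ell$. When $4m < 2\ell$ it blows up at $y \to 0$, and I use the lower bound $y_\nu \geq \hk\sqrt{\lambda_1}$ to extract an additional factor $\hk^{4m-2\ell}$ which absorbs the negative power of $\hk$ in the prefactor. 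In either regime, choosing $N$ large enough depending on $M$, $m$, $\ell$, the per-$\nu$ bound reduces to $C\hk^M\lambda_\nu^\ell$; summing and integrating in $t$ against $|\varphi(t)|^2$ over $\supp\varphi$ then yields the stated estimate.

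The main technical obstacle is the bookkeeping in this last step, particularly the case $\ell > 2m$ where the pointwise Fourier bound on $A_\nu^\pm$ alone is insufficient and the positivity of the spectrum $\lambda_\nu \geq \lambda_1 > 0$ must be invoked to convert a small-frequency singularity into a gain in $\hk$.
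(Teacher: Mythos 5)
Your proof is correct, and it takes a genuinely different route from the paper's. The paper handles the nontrivial regime $\ell<2m$ by first writing $u=D_t^{2r}\Aell^{-r}u$ (with $r\geq m-\ell/2$) so that $w=\Aell^{-r}u$ lies in the $(2m)$-energy level controlled by $\E_\ell(u)$, then estimates the Schwartz kernel of $\varphi F_k^{D(\Aell^m)}(\hk D_t)(1-\psi)\big(\Id-\tF_k^{D(\Aell^m)}(\hk D_t)\big)D_t^{2r}$ as an oscillatory integral via iterated integrations by parts in $t'$, $\tau'$ and $\tau$, driven by the support gap of Lemma~\ref{lemma: estimate diff tau tau'}, and finishes by summing over the intervals $I_\indj$. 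You instead work entirely on the spectral side: expanding $u$ as in~\eqref{eq: semicl reduc wave eq solution}, the algebraic cancellation $\big(1-\tF_k(\hk\sqrt{\lambda_\nu})\big)F_k(\hk\sqrt{\lambda_\nu})=0$ together with the vanishing of $F_k(\hk D_t)$ on the negative-frequency modes (for $k>0$) collapses the three-factor operator to $-\sum_\nu\big[g_\nu\,u_\nu^+\,A_\nu^+ + u_\nu^-\,A_\nu^-\big]e_\nu$, and the pointwise bound $|g_\nu A_\nu^+|+|A_\nu^-|\lesssim_N\hk^{N-1}(1+\hk\sqrt{\lambda_\nu})^{-N}$ follows from the rapid decay of $\hat\psi$ combined with the same support separation (which indeed gives $|\tau\mp\sqrt{\lambda_\nu}|\gtrsim\hk^{-1}+\sqrt{\lambda_\nu}$ on the relevant frequency ranges). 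The remaining weight comparison against $\lambda_\nu^\ell$, using $\lambda_\nu\geq\lambda_1>0$ in the subcase $\ell>2m$, is elementary. Your route avoids both the $D_t^{2r}$ conjugation and the oscillatory-integral kernel machinery, and replaces the sum over $I_\indj$ by the compact support of $\varphi$ with a uniform-in-$t$ bound, which makes the argument shorter and more explicit; the paper's kernel-level argument has the advantage of not relying on the eigenfunction expansion, which may make it more portable. Both arguments use the smoothness of $\psi$ at the same spot: the paper needs $\psi^{(N)}\in L^1$, you need the rapid decay of $\hat\psi$ — these are the same piece of regularity.
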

Note that here one assumes the function $\psi$ to be smooth as opposed
to the results in Lemma~\ref{lemma: semicl reduc action on L2loc} and
Corollary~\ref{cor: refined estimate 1}. In fact, the proof of
Lemma~\ref{lemma: second term time microlocalization} is based on a
kernel regularization argument that requires smoothness of the
function $\psi$.

\begin{proof}
As in other proofs we treat the case $k>0$. The case $k<0$ can be
treated similarly. 

With Lemma~\ref{lemma: semicl reduc action on L2loc} the proof is
clear in the case $\ell \geq 2m$. We shall thus only consider the case $\ell
< 2m$. Let $r \in \N$ be such that $r \geq m -\ell/2>0$. 

With $u$ solution to the wave equation  one has $u = -
\d_t^2\Aell^{-1} u = D_t^2\Aell^{-1} u$
and thus $u = D_t^{2 r}\Aell^{-r} u$. Set $w = \Aell^{-r} u \in
\cap_k \Con^k \big(\R; D(\Aell^{m+r-k})\big)$. It is also a solution
to  the wave equation. One has 
\begin{align}
  \label{eq: climbing energy scale}
  \E_{2m}(w) = \E_{2m-2r}(u) \lesssim \E_{\ell}(u).
\end{align} 
One thus considers the action of the operator 
\begin{align*}
  P = \varphi F_k^{D(\Aell^m)}(\hk D_t) 
        (1- \psi) \big(\Id -\tF_k^{D(\Aell^m)}(\hk D_t)\big) D_t^{2r},
\end{align*}
on $w$. Note that $P$ maps $\S'\big(\R; D(\Aell^m)\big)$ into
itself. Thus the action of $P$ on $\sum_{\indj\in \ZZ} w_\indj$ yields
$\sum_{\indj\in \ZZ} P w_\indj$ with convergence in $\S'\big(\R; D(\Aell^m)\big)$.
Recall that $I_\indj =  [\indj \T, (\indj+1)\T[$ and $w_\indj =
1_{I_\indj} w$.

The kernel of this operator is given by the following oscillatory
integral
\begin{align*}
  K(t,s) = (2 \pi)^{-2} \varphi(t) \iint \!\!\!\int
  e^{i(t-t')\tau' +i(t'-s)\tau} 
  \big(1-\psi (t')\big)   F_k(\hk \tau') \big(1-\tF_k(\hk \tau)\big) \tau^{2r} 
  \, d\tau' \,  d t' \,   d \tau .
\end{align*}
References on the  subject of oscillatory integrals are
\cite{Hoermander:V1,AG:91,LRLR:V1}. In particular, usual operations
such as integrations by parts are licit.

Since $\supp(F_k) \cap \supp ( 1 - \tF_k) = \emptyset$ one has $\tau'
\neq \tau$ in the integrand. In fact one has the following estimation.

%%%%%%%%%%%%%%%%%%%%%%%%
% lemma                %
%%%%%%%%%%%%%%%%%%%%%%%%
\begin{lemma}
  \label{lemma: estimate diff tau tau'}
  There exists $C>0$ such that for all $k \in \ZZ^*$ one has 
  \begin{align}
    \label{eq: estimate diff tau tau'}
    |\tau - \tau'|^{-1} \leq C \min (\hk, \tau^{-1}),
\end{align}
if $\hk \tau' \in \supp(F_k)$ and $\hk \tau \in \supp(1-\tF_k)$.
\end{lemma}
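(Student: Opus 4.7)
My plan is to translate each support condition into a concrete location in frequency space and then separate cases according to the relative sizes of $\tau$ and $\tau'$.

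First, I would unpack the hypotheses. Since $F$ is supported in $]\alpha,\alpha^{-1}[$ and $F_k(\sigma)=F(\sgn(k)\sigma)$, the condition $\hk\tau'\in\supp(F_k)$ forces $\sgn(k)\tau' \in ]\alpha \hk^{-1},\alpha^{-1}\hk^{-1}[$; in particular $\sgn(\tau')=\sgn(k)$ and $|\tau'|\asymp \hk^{-1}$. Since $\tF\equiv 1$ on an open neighborhood of the compact set $\supp(F)$, there exists $\eta>0$, independent of $k$, such that $\supp(1-\tF)$ is at distance $\geq \eta$ from $\supp(F)$; equivalently, $\supp(1-\tF_k)$ is at distance $\geq \eta$ from $\supp(F_k)$.

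Next I would derive the bound $|\tau-\tau'|\geq c\hk^{-1}$ (giving $|\tau-\tau'|^{-1}\leq C\hk$). If $\sgn(\tau)=\sgn(k)$, then both $\hk\tau$ and $\hk\tau'$ sit on the same side of $0$ and the distance statement above gives $|\hk\tau-\hk\tau'|\geq \eta$, hence $|\tau-\tau'|\geq \eta\hk^{-1}$. If $\sgn(\tau)\neq \sgn(k)$, then $\tau$ and $\tau'$ have opposite signs (or $\tau=0$), so $|\tau-\tau'|\geq |\tau'|\geq \alpha\hk^{-1}$. In either subcase, one obtains $|\tau-\tau'|\geq c\hk^{-1}$ with $c=\min(\eta,\alpha)$.

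Finally I would derive the bound $|\tau-\tau'|\geq c|\tau|$ (giving $|\tau-\tau'|^{-1}\leq C\tau^{-1}$, with the convention that this is trivial if $\tau=0$). Split on the size of $|\tau|$: if $|\tau|\leq 2\alpha^{-1}\hk^{-1}$, then combining with the previous step yields $|\tau-\tau'|\geq c\hk^{-1}\geq (c\alpha/2)|\tau|$. If $|\tau|>2\alpha^{-1}\hk^{-1}$, then since $|\tau'|\leq \alpha^{-1}\hk^{-1}<|\tau|/2$ the triangle inequality gives $|\tau-\tau'|\geq |\tau|-|\tau'|\geq |\tau|/2$. Taking the minimum of the two bounds and adjusting $C$ yields $|\tau-\tau'|^{-1}\leq C\min(\hk,\tau^{-1})$ uniformly in $k$.

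There is no genuine obstacle here: the only delicate point is making sure both the opposite-sign case in step two and the large-$|\tau|$ case in step three are treated separately, so that the uniform constant $C$ truly depends only on $\alpha$, $\rho$, and the chosen neighborhood of $\supp(F)$ in which $\tF\equiv 1$, and not on $k$.
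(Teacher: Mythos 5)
Your proof is correct and follows essentially the same route as the paper's: first use the positive distance between $\supp(F_k)$ and $\supp(1-\tF_k)$ to get $|\tau-\tau'|\gtrsim\hk^{-1}$ uniformly in $k$, then split on whether $\hk|\tau|$ is above or below the threshold $2\alpha^{-1}$, in the large-$|\tau|$ case using the triangle inequality with $|\tau'|\leq\alpha^{-1}\hk^{-1}$. You are a little more explicit than the paper about signs (the paper tacitly restricts to $k>0$, $\tau>0$, and the opposite-sign subcase you handle separately in fact already follows from the distance-between-supports observation alone), but this is a cosmetic difference, not a different argument.
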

A proof of Lemma~\ref{lemma: estimate diff tau tau'} is given below.

\medskip
With $\frac{-i }{\tau - \tau'}\d_{t'} e^{i t'(\tau - \tau')} = e^{i
  t'(\tau - \tau')}$, $N$ integrations by parts give
\begin{align*}
  K(t,s) = - i^N (2 \pi)^{-2} \varphi(t) \iint \!\!\!\int
  e^{i(t-t')\tau' +i(t'-s)\tau} 
  \psi^{(N)} (t')
  \frac{\tau^{2r} F_k(\hk \tau') \big(1-\tF_k(\hk \tau)\big)}{(\tau - \tau')^N} 
  \, d\tau' \,  d t' \,   d \tau .
\end{align*}
This is the step of the proof where smoothness of the function $\psi$
is used. 

Observe that $t \neq t'$ if $t \in \supp(\varphi)$ and $t' \in
\supp(1- \psi)$ or $\supp(\psi^{(N)})$.  
With  $\frac{-i }{t - t'}\d_{\tau'} e^{i \tau'(t - t')} = e^{i \tau'(t
  - t')}$, $N'$  integrations by parts give
\begin{multline*}
  K(t,s) = - i^{N+N'} \hk^{N'} (2 \pi)^{-2} \varphi(t) \iint \!\!\!\int
  e^{i(t-t')\tau' +i(t'-s)\tau} 
  \psi^{(N)} (t')\\
  \times \frac{\tau^{2r} F_k^{(N')}(\hk \tau') \big(1-\tF_k(\hk \tau)\big)}{(\tau -
  \tau')^N (t-t')^{N'}} 
  \, d\tau' \,  d t' \,   d \tau .
\end{multline*}
Using $N \geq 2r+2$ and Lemma~\ref{lemma: estimate diff tau tau'}
with this form of the kernel of $P$ a first estimate one can write is
the following
\begin{align}
  \label{eq: second term- refined-1}
  \bigNorm{P w_\indj(t)}{D(\Aell^m)}
  \lesssim \T^{1/2} \hk^{N'-1} \Norm{\varphi}{L^\infty} \Norm{\psi^{(N)}}{L^1} \Norm{F_k^{(N')}}{L^1} 
  \Norm{1-\tF_k}{L^\infty}
  \Norm{w_\indj}{L^2(\R; D(\Aell^m))}, \quad t \in \R.
\end{align}

\medskip
For $\indj$ such that $\dist (\supp(\psi), I_\indj)>0$,  we can proceed as in the proof of
Lemma~\ref{lemma: semicl reduc action on L2loc}. 
Set $G_k (\sigma) = \sigma^{2r} (1-\tF_k) (\sigma)$. One has 
\begin{multline*}
  K(t,s) = - i^{N+N'} \hk^{N'-2r} (2 \pi)^{-2} \varphi(t) \iint \!\!\!\int
  e^{i(t-t')\tau' +i(t'-s)\tau} 
  \psi^{(N)} (t')\\
  \times \frac{ F_k^{(N')}(\hk \tau') G_k(\hk \tau)}{(\tau -
  \tau')^N (t-t')^{N'}} 
  \, d\tau' \,  d t' \,   d \tau .
\end{multline*}

Set $\gamma =
\dist(\supp(\psi), I_\indj)$. One has $\frac{- i}{t'-s} \d_\tau
e^{i(t'-s)\tau} = e^{i(t-t')\tau' +i(t'-s)\tau}$. Thus, $N''$
integration by parts yield
\begin{multline*}
   P w_\indj(t) =  i^{N+N'+N''} \hk^{N'+N''-2 r} (2 \pi)^{-2} \varphi(t) \iint\!\!\!\iint
  e^{i(t-t')\tau' +i(t'-s)\tau} 
  \psi^{(N)} (t')\\
  \times \frac{F_k^{(N')}(\hk \tau') G_k^{(N'')}(\hk \tau)}{(\tau -
  \tau')^N (t-t')^{N'} (t'-s)^{N''}} w_\indj(s)
  \, d\tau' \,  d t' \,   d \tau d s.
\end{multline*}
If $N'' \geq 2r +1$ then $\supp(G_k^{(N'')})\subset \supp(\tF_k')
\subset \supp(\tF_k)$. 
Using Lemma~\ref{lemma: estimate diff tau tau'} one
obtains the following estimate
\begin{align}
  \label{eq: second term- refined-2}
  &\bigNorm{P w_\indj(t)}{D(\Aell^m)} \\
  &\quad \lesssim \T^{1/2} \hk^{N'+N''-2r-2} \gamma^{-N''}
   \Norm{\varphi}{L^\infty} \Norm{\psi^{(N)}}{L^1} 
  \Norm{F_k^{(N')}}{L^1} 
  \Norm{G_k^{(N'')}}{L^1}
  \Norm{w_\indj}{L^2(\R; D(\Aell^m))}, \quad t \in \R.\notag
\end{align}
If one chooses $N'' \geq 2$, 
with the $\gamma^{-N''}$ factor the sum with respect to $\indj$
converges. Since $\sum_\indj w_\indj$ converges to $w$ in $\S'\big(\R; D(\Aell^m)\big)$ one
concludes that  the action of $P$ on $w$ is equal to 
$\sum_\indj P w_\indj$ in $\S'\big(\R; D(\Aell^m)\big)$ and thus in
$L_{\loc}^\infty\big(\R; D(\Aell^m)\big)$ by estimate
\eqref{eq: second term- refined-2} for $|\indj|$ \suff large and  estimate \eqref{eq: second
  term- refined-1} for the remaining  finite number of
terms. Moreover, one has 
\begin{align*}
  \bigNorm{P w(t)}{D(\Aell^m)}
  \lesssim C_M \hk^M \sup_{\indj \in \ZZ}\Norm{w_\indj}{L^2(\R; D(\Aell^m))} 
  = C_M \hk^M \Norm{w}{H_{D(\Aell^m)}}\lesssim C_M \hk^M \E_{2m} (w)^{1/2},
\end{align*}
for any $M \in \N$.
As 
$\varphi F_k^{D(\Aell^m)}(\hk D_t) 
        (1- \psi) \big(\Id -\tF_k^{D(\Aell^m)}(\hk D_t)\big) u (t) 
  = P w(t)$, one concludes the proof with \eqref{eq: climbing energy scale}.
\end{proof}

\medskip
%%%% proof of lemma
\begin{proof}[Proof of Lemma~\ref{lemma: estimate diff tau tau'}]
  If $\hk \tau' \in \supp(F_k)$ then $\alpha \leq \hk \tau'\leq
  \alpha^{-1}$. If $\hk \tau \in \supp(1-\tF_k)$ then $\hk|\tau -
  \tau'| \gtrsim 1$ yielding 
  \begin{align}
     \label{eq: estimate diff tau tau' -1}
    |\tau - \tau'|^{-1} \lesssim \hk.
  \end{align}

  First,  consider the case $\hk \tau \leq 2 \alpha^{-1}$. Then, $\hk
  \lesssim \tau^{-1}$. With \eqref{eq: estimate diff tau tau' -1}
  one obtains the result. 

  \medskip
  Second,  consider the case $\hk \tau \geq 2 \alpha^{-1}$. 
  Then, one has 
  \begin{align*}
    \hk |\tau - \tau'| = \hk \tau - \hk \tau' 
    \geq  \frac12 \hk \tau  
    + \big( \frac12 \hk \tau   - \alpha^{-1}\big)\geq \frac12 \hk \tau,
  \end{align*} 
  implying $|\tau - \tau'|^{-1} \lesssim \tau^{-1}$, yielding the
  result in this second case.
\end{proof}

%%%%%%%%%%%%%%%%%%
% section
%%%%%%%%%%%%%%%%%%
\subsection{General case: $\ell_1 \leq 2 m_0$.}
\label{sec: proof ell1 < ell0}
The assumed
semi-classical observation inequality \eqref{eq: semicl obs} reads 
\begin{equation}
  \label{eq: semicl obs-reformulated-2}
  \E_{\ell_1}(u^k)
  \leq C \int _\delta^{\T-\delta} \Norm{\Obs u^k(t)}{K}^2dt,
  \qquad (u^k)_{k\in \N} \in B^+, \ k \geq k_0,
\end{equation}
and we aim to prove that 
\begin{equation}
   \label{eq: observation wave-2}
\E_{\ell_1}(u)
  \leq C' \int_{-\delta}^{\T+\delta} \Norm{\Obs u(t)}{K}^2\, d t, 
\end{equation}
holds for a solution $u$ to the wave equation \eqref{eq: abstract wave
  equation} writen in \eqref{eq: semicl reduc wave eq solution} that
lies in the $(2m_0)$-energy level.
Thus, we consider $\udl{u}^0\in D(\Aell^{m_0})$ and $\udl{u}^1  \in
D(\Aell^{m_0 -1/2})$. Then, $u \in \cap_k \Con^k \big(\R; D(\Aell^{m_0 - k/2})\big)$. 

The begining of the proof is similar to that given in
Section~\ref{sec: proof ell1 = ell0} and  
one reaches the following estimate that is the counterpart to \eqref{eq: semicl reduc  estimation energy} 
\begin{align}
  \label{eq: semicl reduc  estimation energy-2}
  \E_{\ell_1}(u)
  &\lesssim  \sum_{0 \leq  |k| < k_1}  \E_{\ell_1} (u^k)
  +  \sum_{|k|\geq k_1}\Norm{\varphi F_k^{K}(\hk D_t) \Obs u}{L^2(\R;K)}^2,
\end{align}
for $k_1 \geq k_0$ to be chosen below. The treatment of the terms in
the second sum is different from what is done in  Section~\ref{sec:
  proof ell1 = ell0}.
Consider $\psi \in \Cinfc(]-\delta, \T+\delta[)$ such that $\psi = 1$
in a \nhd of $\ovl{I_0} =  [0,\T]$.  One writes 
\begin{align*}
  \Norm{\varphi F_k^{K}(\hk D_t) \Obs u}{L^2(\R;K)}^2
  \lesssim 
  \Norm{\varphi F_k^{K}(\hk D_t) \psi \Obs u}{L^2(\R;K)}^2
  + \Norm{\varphi F_k^{K}(\hk D_t) (1-\psi)\Obs u}{L^2(\R;K)}^2,
\end{align*}
yielding, with the second  point of lemma~\ref{lemma: semicl reduc action on
  L2loc} 
\begin{align*}
  \E_{\ell_1}(u)
  &\lesssim  \sum_{0 \leq  |k| < k_1}  \E_{\ell_1} (u^k)
    +  \Norm{\psi \Obs u}{L^2(\R;K)}^2
   + \sum_{|k|\geq k_1} 
      \Norm{\varphi F_k^{K}(\hk D_t) (1-\psi)\Obs u}{L^2(\R;K)}^2.
      \notag
\end{align*}
We now concentrate our attention on the terms in the last sum on the
\rhs. First one writes
\begin{align*}
  \Norm{\varphi F_k^{K}(\hk D_t) (1-\psi)\Obs u}{L^2(\R;K)}
  \lesssim \Norm{\varphi F_k^{D(\Aell^{m_0})}(\hk D_t) (1-\psi) u}{L^2(\R;D(\Aell^{m_0}))},
\end{align*}
using that $\Obs$ is bounded on $D(\Aell^{m_0})$; see \eqref{eq: bound
  obs op}. This gives
\begin{align}
  \label{eq: semicl reduc  estimation energy-2b}
  \E_{\ell_1}(u)
  &\lesssim  \sum_{0 \leq  |k| < k_1}  \E_{\ell_1} (u^k)
    +  \Norm{\psi \Obs u}{L^2(\R;K)}^2\\
    &\quad + \sum_{|k|\geq k_1} 
      \Norm{\varphi F_k^{D(\Aell^{m_0})}(\hk D_t) (1-\psi) u}{L^2(\R;D(\Aell^{m_0}))}^2.
      \notag
\end{align}

Second, as in Section~\ref{sec: Refined time-microlocalization estimates}
consider $\tF \in \Cinfc(\R_+^*)$ such that $\tF=1$ in a \nhd of
$\supp(F)$. With Corollary~\ref{cor: refined estimate 1} and Lemma~\ref{lemma: second term time microlocalization} one has 
\begin{align*}
  &\Norm{\varphi F_k^{D(\Aell^{m_0})}(\hk D_t) (1-\psi) u}
  {L^2(\R;D(\Aell^{m_0}))}^2\\
  &\quad \lesssim 
  \bigNorm{\varphi F_k^{D(\Aell^{m_0})}(\hk D_t) 
        (1- \psi) \tF_k^{D(\Aell^{m_0})}(\hk D_t)   u}
  {L^2(\R; D(\Aell^{m_0})}^2\\
 &\qquad  + \bigNorm{\varphi F_k^{D(\Aell^{m_0})}(\hk D_t) 
        (1- \psi) \big(\Id -\tF_k^{D(\Aell^{m_0})}(\hk D_t)\big)   u}{L^2(\R; D(\Aell^{m_0}))}^2\\
  &\quad \lesssim \hk^M \E_{\ell_1}(u),
\end{align*}
for any $M \in \N$. From~\eqref{eq: semicl reduc  estimation
  energy-2b} using that  $\hk = \rho^{-|k|}$ with $\rho>1$
one obtains
\begin{align*}
  \E_{\ell_1}(u)
  &\lesssim  \sum_{0 \leq  |k| < k_1}  \E_{\ell_1} (u^k)
    +  \Norm{\psi \Obs u}{L^2(\R;K)}^2
   + h_{k_1}^M \E_{\ell_1}(u).
\end{align*}
For $k_1\geq k_0$ chosen \suff large one obtains
\begin{align}
  \label{eq: semicl reduc  estimation energy2-2}
  \E_{\ell_1}(u)
  &\lesssim  \sum_{0 \leq  |k| < k_1}  \E_{\ell_1}(u^k)
  +  \Norm{\psi \Obs u}{L^2(\R; K)}^2.
\end{align}
With \eqref{eq: semicl reduc  estimation energy2-2} the result of
Lemma~\ref{lemma: invisible solution} holds here too.
Arguing as in the proof given in Section~\ref{sec: proof ell1 = ell0}
one obtains the sought observability estimate.
\hfill \qedsymbol \endproof

%%%%%%%%%%%%%%%%%
% section
%%%%%%%%%%%%%%%%%
\section{Proof of the main result for the Schr\"odinger equation}

%%%%%%%%%%%%%%%%%%
% sub-section
%%%%%%%%%%%%%%%%%%
\subsection{Refined time-microlocalization estimates}
\label{sec: Refined time-microlocalization estimates Schrodinger}

The results of Section~\ref{sec: Refined time-microlocalization
  estimates} can be adapted to a solution $u(t)$ of a Schr\"odinger
equation \eqref{eq: abstract Schrodinger equation} with
$\udl{u}^0 \in D(\Aell^m)$, for some $m \in \R$. Then,
$u \in \cap_k \Con^k\big(\R; D(\Aell^{m-k})\big)$.

Let $F \in \Cinfc(]\alpha, \alpha^{-1}[)$ be as given by Lemma~\ref{lemma: semicl
  reduc choice F}. Consider $\tF \in \Cinfc(]\alpha, \alpha^{-1}[)$ such that $\tF=1$
in a \nhd of $\supp(F)$. 
%%%%%%%%%%%%%%%%%%%%%%%%
% lemma                %
%%%%%%%%%%%%%%%%%%%%%%%%
\begin{lemma}
  \label{lemma: refined estimate 1-Schrodinger}
  Let $p \in \R$. There exists $C= C_{m, p}>0$ such that  
      \begin{align}
        \label{eq: refined estimate 1-Schrodinger}
        \bigNorm{\tF_k^{D(\Aell^m)}(\hk D_t)   u}{H_{D(\Aell^m)}}
        \leq C \hk^{p-m} \Norm{\udl{u}^0}{D(\Aell^p)}, \qquad k \in \N^*.
      \end{align}
\end{lemma}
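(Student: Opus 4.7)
The proof will follow exactly the same pattern as the wave version (Lemma~\ref{lemma: refined estimate 1}), using the Schrödinger-specific tools (Lemma~\ref{lemma: semicl reduc action on y Schrodinger} and Lemma~\ref{lemma: norm dt uk Schrodinger}) in place of their wave counterparts. The key simplification compared to the wave case is that the semi-classical frequency band for Schrödinger is $\lambda_\nu \eqsim \hk^{-1}$ (instead of $\sqrt{\lambda_\nu} \eqsim \hk^{-1}$), and the Schrödinger flow is unitary on every $D(\Aell^s)$, so norms are literally constant in $t$.

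The plan is to first apply the verbatim analogue of Lemma~\ref{lemma: semicl reduc action on y Schrodinger} with $\tF$ replacing $F$ — the proof goes through unchanged since it only uses that $F_k(\hk D_t) e^{i\lambda_\nu t} = F_k(\hk\lambda_\nu) e^{i\lambda_\nu t}$. This yields
\begin{align*}
\tF_k^{D(\Aell^m)}(\hk D_t)\, u(t) = \sum_{\nu} \tF_k(\hk \lambda_\nu)\, e^{i t \lambda_\nu}\, \udl{u}^0_\nu\, e_\nu,
\end{align*}
where the sum is effectively restricted to $\nu$ with $\hk\lambda_\nu \in \supp(\tF)$, in particular $\lambda_\nu \eqsim \hk^{-1}$.

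Next I would compute directly
\begin{align*}
\Norm{\tF_k^{D(\Aell^m)}(\hk D_t)\, u(t)}{D(\Aell^m)}^2
= \sum_{\nu} \lambda_\nu^{2m}\, |\tF_k(\hk\lambda_\nu)|^2\, |\udl{u}^0_\nu|^2,
\end{align*}
which is manifestly independent of $t$ (since $|e^{it\lambda_\nu}|=1$). Using that $\lambda_\nu \eqsim \hk^{-1}$ on the support of $\tF_k(\hk\cdot)$, write $\lambda_\nu^{2m} = \lambda_\nu^{2p}\, \lambda_\nu^{2(m-p)} \eqsim \hk^{2(p-m)}\lambda_\nu^{2p}$, and bound $|\tF_k|\leq \Norm{\tF}{L^\infty}$ to obtain
\begin{align*}
\Norm{\tF_k^{D(\Aell^m)}(\hk D_t)\, u(t)}{D(\Aell^m)}^2
\lesssim \hk^{2(p-m)} \sum_\nu \lambda_\nu^{2p}|\udl{u}^0_\nu|^2
\leq \hk^{2(p-m)} \Norm{\udl{u}^0}{D(\Aell^p)}^2,
\end{align*}
uniformly in $k$.

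Finally, because this pointwise-in-$t$ bound does not depend on $t$, the definition of $\Norm{\cdot}{H_{D(\Aell^m)}}$ in \eqref{eq: def H - abstract arg} gives
\begin{align*}
\bigNorm{\tF_k^{D(\Aell^m)}(\hk D_t) u}{H_{D(\Aell^m)}}
= \sup_{\indj \in \ZZ} \bigl( \T \bigr)^{1/2}\Norm{\tF_k^{D(\Aell^m)}(\hk D_t)\, u(0)}{D(\Aell^m)}
\lesssim \hk^{p-m}\Norm{\udl{u}^0}{D(\Aell^p)},
\end{align*}
with the $\T^{1/2}$ factor absorbed into the constant $C = C_{m,p}$. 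There is no real obstacle: the argument is a direct frequency-space estimate, one degree simpler than the wave analogue because the Schrödinger group preserves each $D(\Aell^s)$-norm exactly rather than only up to equivalence between two different energy contributions.
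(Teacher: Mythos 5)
Your proof is correct and follows essentially the same route as the paper: expand $\tF_k^{D(\Aell^m)}(\hk D_t)u$ via the Schr\"odinger action lemma, use $\lambda_\nu\eqsim\hk^{-1}$ on the support of $\tF_k(\hk\,\cdot)$ to trade $\lambda_\nu^m$ for $\hk^{p-m}\lambda_\nu^p$, exploit unitarity of the flow, and conclude via the definition of $\Norm{\cdot}{H_{D(\Aell^m)}}$. The only cosmetic difference is that you carry out the eigenmode computation directly where the paper invokes the packaged norm-equivalence Lemma~\ref{lemma: norm dt uk Schrodinger}; the content is identical.
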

The definition of the Fourier multiplier $\tF_k^{D(\Aell^m)}(\hk D_t)$ is as in the beginning
of Section~\ref{sec: Time microlocalization 1}.
%%%% proof of lemma
\begin{proof}
  With Lemma~\ref{lemma: semicl reduc action on y Schrodinger} one has
   \begin{align*}
     \tilde{u}^k(t) = \tF_k^{D(\Aell^{m})}(\hk D_t) u (t)
    =\sum_{\nu \in J^S_k}
      \tF_k \big( \hk \lambda_\nu\big)
    e^{i t \lambda_\nu} \udl{u}^0_\nu e_\nu \in E^S_k \in D(\Aell^\infty).
\end{align*} 
With Lemma~\ref{lemma: norm dt uk Schrodinger} one writes
  \begin{align*}
    \hk^{m} \Norm{\tilde{u}^k(t)}{D(\Aell^m)} 
    &\eqsim \hk^{p}\Norm{\tilde{u}^k(t)}{D(\Aell^{p})} 
      \lesssim \hk^{p} \Norm{ u (t)}{D(\Aell^{p})}
      \lesssim \hk^{p} \Norm{\udl{u}^0}{D(\Aell^p)}.
  \end{align*}
  The result follows from the definition of $\Norm{.}{H_{D(\Aell^m)}}$
  in \eqref{eq: def H - abstract arg}.
\end{proof}
Combined with the  third item of Lemma~\ref{lemma: semicl reduc action
  on L2loc} one has the following corollary.
%%%%%%%%%%%%%%%%%%%%%%%%
% corollary              %
%%%%%%%%%%%%%%%%%%%%%%%%
\begin{corollary}
  \label{cor: refined estimate 1-Schrodinger}
  If $\varphi \in \Cinfc(]0,\T[)$ and $\psi \in L^\infty(\R)$
     is such that $\psi =1$ in a \nhd of $\ovl{I_0}$, then for any $M\geq 1$ and
      $p \in \R$ there exists $C= C_{M,m, p}>0$ such that 
      \begin{align}
        \label{eq: cor refined estimate 1-Schrodinger}
        \bigNorm{\varphi F_k^{D(\Aell^m)}(\hk D_t) 
        (1- \psi) \tF_k^{D(\Aell^m)}(\hk D_t)   u}{L^2(\R; D(\Aell^m))}^2
        \leq C \hk^M \Norm{\udl{u}^0}{D(\Aell^p)}.
      \end{align}
\end{corollary}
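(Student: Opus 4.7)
The plan is to simply compose Lemma~\ref{lemma: refined estimate 1-Schrodinger} with estimate~\eqref{eq: w -> psi w} from the third point of Lemma~\ref{lemma: semicl reduc action on L2loc}. I introduce the intermediate function
$$\tilde{u}^k := \tF_k^{D(\Aell^m)}(\hk D_t)\, u,$$
so that the operator appearing inside the norm in \eqref{eq: cor refined estimate 1-Schrodinger} factors as $\varphi F_k^{D(\Aell^m)}(\hk D_t)\bigl((1-\psi)\tilde{u}^k\bigr)$.

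The assumptions on $\varphi$ and $\psi$ are exactly those required by~\eqref{eq: w -> psi w} (in particular $\psi \equiv 1$ on a neighbourhood of $\ovl{I_0}$, hence on $I_0$ itself), applied with Hilbert space $\H = D(\Aell^m)$ and source $w = \tilde{u}^k$. First I need to verify that $\tilde{u}^k$ does lie in $H_{D(\Aell^m)}$: this is precisely the content of Lemma~\ref{lemma: refined estimate 1-Schrodinger}, which moreover supplies the quantitative bound $\Norm{\tilde{u}^k}{H_{D(\Aell^m)}} \leq C\, \hk^{p-m}\Norm{\udl{u}^0}{D(\Aell^p)}$. Estimate~\eqref{eq: w -> psi w} then yields, for any $M' \geq 1$,
$$\bigNorm{\varphi F_k^{D(\Aell^m)}(\hk D_t)\bigl((1-\psi)\tilde{u}^k\bigr)}{L^2(\R; D(\Aell^m))} \leq C_{M'}\, \hk^{M'}\, \Norm{\tilde{u}^k}{H_{D(\Aell^m)}}.$$

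Combining the two estimates and squaring gives a bound of the form $C\, \hk^{2M' + 2(p-m)}\, \Norm{\udl{u}^0}{D(\Aell^p)}^2$ on the quantity we want to estimate. Since $p$ and $m$ are fixed once and for all, while $M'$ can be taken arbitrarily large, for any prescribed $M \in \N$ one simply chooses $M'$ so that $2M' + 2(p-m) \geq M$, which yields the stated inequality. There is no real obstacle: the corollary is a two-line composition of the two preceding results, and the only minor book-keeping point is the absorption of the fixed factor $\hk^{p-m}$ into an arbitrarily large power of $\hk$, which is legitimate since $\hk = \varrho^{-|k|} < 1$ for all $k \in \N^*$.
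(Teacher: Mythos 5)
Your argument is exactly the paper's: the authors simply state that Corollary~\ref{cor: refined estimate 1-Schrodinger} is obtained by combining Lemma~\ref{lemma: refined estimate 1-Schrodinger} with the third item of Lemma~\ref{lemma: semicl reduc action on L2loc}, which is precisely what you do, including the minor book-keeping of absorbing the fixed $\hk^{p-m}$ factor into an arbitrarily large power of $\hk$. (Note in passing that your computation naturally yields $\Norm{\udl{u}^0}{D(\Aell^p)}^2$ on the right-hand side, which suggests the missing square in the paper's stated inequality \eqref{eq: cor refined estimate 1-Schrodinger} is a typo, consistent with the squared energy appearing in the wave-equation analogue \eqref{eq: cor refined estimate 1}.)
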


\medskip
%%%%%%%%%%%%%%%%%%%%%%%%
% lemma                %
%%%%%%%%%%%%%%%%%%%%%%%%
\begin{lemma}
  \label{lemma: second term time microlocalization Schrodinger}
  Let $\varphi \in \Cinfc(]0,\T[)$ and $\psi \in \Cinfc(\R)$
     be such that $\psi =1$ in a \nhd of $\ovl{I_0}$, then for any $M\geq 1$ and
      $p \in \R$ there exists $C= C_{M,m, p}>0$ such that 
      \begin{align}
        \label{eq: second term- refined Schrodinger}
        \bigNorm{\varphi F_k^{D(\Aell^m)}(\hk D_t) 
        (1- \psi) \big(\Id -\tF_k^{D(\Aell^m)}(\hk D_t)\big)   u}{L^2(\R; D(\Aell^m))}^2
        \leq C \hk^M \Norm{\udl{u}^0}{D(\Aell^p)}.
      \end{align}
\end{lemma}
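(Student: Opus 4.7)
The plan is to follow the proof of Lemma~\ref{lemma: second term time microlocalization} in the wave case \emph{mutatis mutandis}, with the sole structural modification that, for a Schr\"odinger solution, the identity $D_t u = \Aell u$ replaces $D_t^2 u = \Aell u$ of the wave case. Consequently, climbing the energy scale by one level will cost one power of $D_t$ rather than two.

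First, I would dispose of the case $p \geq m$: there $\Norm{u(t)}{D(\Aell^m)} \leq \Norm{\udl{u}^0}{D(\Aell^p)}$ by conservation, so $u \in H_{D(\Aell^m)}$, and the desired bound is an immediate consequence of item~(3) of Lemma~\ref{lemma: semicl reduc action on L2loc}. For the substantive case $p < m$, I would pick $r \in \N$ with $r \geq m - p$ and set $w = \Aell^{-r} u$, which is itself a Schr\"odinger solution with initial data in $D(\Aell^{p+r}) \subset D(\Aell^m)$. By conservation of the $D(\Aell^m)$-norm,
\begin{align*}
\Norm{w(t)}{D(\Aell^m)} = \Norm{\udl{u}^0}{D(\Aell^{m-r})} \lesssim \Norm{\udl{u}^0}{D(\Aell^p)}, \qquad t \in \R,
\end{align*}
so that $w \in H_{D(\Aell^m)}$, and iterating $D_t u = \Aell u$ yields $u = \Aell^r w = D_t^r w$.

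One is thus reduced to estimating the action on $w$ of the operator
\begin{align*}
P = \varphi F_k^{D(\Aell^m)}(\hk D_t)(1 - \psi)\bigl(\Id - \tF_k^{D(\Aell^m)}(\hk D_t)\bigr) D_t^r,
\end{align*}
whose kernel is an oscillatory integral in $(t',\tau,\tau')$ identical in form to the one appearing in the wave proof, except that $\tau^{2r}$ is replaced by $\tau^r$. I would then carry out, in order: $N$ integrations by parts in $t'$, using $\psi \in \Cinfc$ to produce $\psi^{(N)}(t')$ and a factor $(\tau - \tau')^{-N}$ which is controlled via Lemma~\ref{lemma: estimate diff tau tau'}; $N'$ integrations by parts in $\tau'$, legitimate since $t \neq t'$ on $\supp(\varphi) \times \supp(\psi^{(N)})$, producing the key $\hk^{N'}$ and $(t-t')^{-N'}$; and, for bounded time cutoffs $w_\indj = \unitfunction{I_\indj} w$ with $|\indj|$ large, further $N''$ integrations by parts in $\tau$, yielding $(t'-s)^{-N''}$ and summability over $\indj$. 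Assembling the pieces exactly as in the wave proof gives $\Norm{P w(t)}{D(\Aell^m)} \lesssim \hk^M \Norm{w}{H_{D(\Aell^m)}}$ on $\supp(\varphi)$, hence the claim.

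The main obstacle I foresee is only a bookkeeping one: after writing $\tau^r = \hk^{-r}(\hk \tau)^r$, one must verify that the net power $\hk^{N'+N''-r}$ gained from the frequency-side integrations by parts can be made to exceed any prescribed $M$. This is automatic since $N'$ and $N''$ are free parameters, so no genuine analytic difficulty arises beyond a faithful transcription of the wave argument with $D_t^r$ in place of $D_t^{2r}$.
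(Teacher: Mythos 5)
Your proof follows the paper's argument essentially verbatim: you dispose of $p\geq m$ via item~(3) of the time-microlocalization lemma, and for $p<m$ you set $w=\Aell^{-r}u$ with $r\geq m-p$, use conservation of the $D(\Aell^{m-r})$-norm along the Schr\"odinger flow, and run the wave-case oscillatory-integral estimate on $P = \varphi\, F_k^{D(\Aell^m)}(\hk D_t)(1-\psi)\big(\Id - \tF_k^{D(\Aell^m)}(\hk D_t)\big) D_t^{r}$ with $\tau^r$ replacing $\tau^{2r}$, exactly as the paper does. The only (inconsequential) slip is describing the initial datum of $w$ as lying in $D(\Aell^{p+r})$ rather than $D(\Aell^{m+r})$, and a harmless $O(\hk^{-2})$ loss omitted in your book-keeping of the $\hk$-powers; both are immaterial since $N',N''$ are free.
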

\begin{proof}
With Lemma~\ref{lemma: semicl reduc action on L2loc} the proof is
clear in the case $p \geq m$. We shall thus only consider the case $p
< m$. Let $r \in \N$ be such that $r \geq m -p$.  With $u$ solution to the Schrodinger equation  one has $u = D_t^{r}\Aell^{-r} u$. Set $w = \Aell^{-r} u$. It is also a solution of the Schrodinger equation that lies in
$D(\Aell^\infty)$. One has 
\begin{align}
  \label{eq: climbing energy scale Schrodinger}
  \Norm{w(t)}{D(\Aell^m)}= \Norm{u(t)}{D(\Aell^{m-r})} 
  \lesssim \Norm{u(t)}{D(\Aell^{p})}, 
  \quad t \in \R.
\end{align} 
One thus considers the action of the operator 
\begin{align*}
  P = \varphi F_k^{D(\Aell^m)}(\hk D_t) 
        (1- \psi) \big(\Id -\tF_k^{D(\Aell^m)}(\hk D_t)\big) D_t^{r},
\end{align*}
on $w$. As in the proof of Lemma~\ref{lemma: second term time
  microlocalization} one obtains
\begin{align*}
  \bigNorm{P w(t)}{D(\Aell^m)}
  \lesssim C_M \hk^M \sup_{\indj \in \ZZ}\Norm{w_\indj}{L^2(\R; D(\Aell^m))} 
  = C_M \hk^M \Norm{w}{H_{D(\Aell^m)}} \lesssim   C_M \hk^M \Norm{w(0)}{D(\Aell^m)}.
\end{align*}
As 
$\varphi F_k^{D(\Aell^m)}(\hk D_t) 
        (1- \psi) \big(\Id -\tF_k^{D(\Aell^m)}(\hk D_t)\big) u (t) 
  = P w(t)$, one concludes the proof with \eqref{eq: climbing energy scale Schrodinger}.
\end{proof}

\subsection{Proof of Theorem~\ref{theorem: main result Schrodinger}}
Here, we provide only one proof that treats the general case $p_1
\leq m_0$. Let $\T = (T+T')/2$ and $\delta = (\T-T)/2 =
(T'-\T)/2$. The assumed
semi-classical observation inequality reads 
\begin{equation}
  \label{eq: semicl obs-Schrodinger proof} 
  \Norm{u^k}{D(\Aell^{p_1}}
  \leq C \int _\delta^{\T-\delta} \Norm{\Obs u^k(t)}{K}dt,
  \qquad (u^k)_{k\in \N} \in B^S, \ k \geq k_0,
\end{equation}
and we aim to prove that 
\begin{equation*}
 \Norm{\udl{u}^0}{D(\Aell^{p_1})}
  \leq C' \int_{-\delta}^{\T+\delta} \Norm{\Obs u(t)}{K}\, d t 
\end{equation*}
holds for any solution $u$ to the Schr\"odinger equation \eqref{eq: abstract Schrodinger
  equation} writen in \eqref{eq: semicl reduc Schrodinger eq solution}
with $\udl{u}^0\in D(\Aell^{m_0})$. We thus consider such a
solution. One has $u \in \Con^k\big(\R; D(\Aell^{m_0-k})\big)$.

One has 
\begin{align}
  \label{eq: observation in H-schrodinger}
  \Norm{\Obs u}{H_K} 
  \lesssim \Norm{\Obs u}{L^\infty(\R; K)} 
  \lesssim \Norm{u}{L^\infty(\R; D(\Aell^{m_0})}  
  \lesssim \Norm{\udl{u}^0}{D(\Aell^{m_0})} .
\end{align}
Let $F$ be chosen as in Lemma~\ref{lemma: semicl reduc choice
  F}. With Lemma~\ref{lemma: semicl reduc
  action on y Schrodinger}, for $k \in \N^*$, set
\begin{align}
  \label{eq: chi k y Schrodinger}
  u^k (t)= F_k^{D(\Aell^{p_1})}(\hk D_t) u(t)
  = \sum_{\nu \in J_k^S}
   F_k ( \hk \lambda_\nu)
    e^{i t \lambda_\nu} \udl{u}^0_\nu e_\nu.
\end{align}
One has $u^k \in E_k$.
For $k \geq k_0$ with 
the semi-classical observation property~\eqref{eq: semicl obs-Schrodinger proof} one has
\begin{equation}
  \label{eq: semicl obs-bis Schrodinger}
  \Norm{u^k}{D(\Aell^{p_1})} 
  \lesssim \Norm{\varphi \Obs u^k(t)}{L^2(\R,K)}, 
  \qquad |k| \geq k_0,
\end{equation}
where   $\varphi \in \Cinfc(]0,\T[)$ with $\varphi =1$ on a \nhd of 
$[\delta,\T-\delta]$.
In the cases $k>0$
or $k <0$, one has 
\begin{align*}
  \Norm{u^k}{D(\Aell^{p_1})}^2
  = \sum_{\nu \in J_k^S}\lambda_\nu^{2 p_1}
  F_k(\hk \lambda_\nu)^2 |\udl{u}^0_\nu|^2.
\end{align*}
Set $u^0 = \sum_{\lambda_\nu \leq 1} e^{i t \lambda_\nu}
\udl{u}^0_\nu$.
With Lemma~\ref{lemma: semicl reduc choice F} one finds
\begin{align*}
  \Norm{u- u^0}{D(\Aell^{p_1})}^2
  &=  \Norm{u}{D(\Aell^{p_1})}^2 - \Norm{u^0}{D(\Aell^{p_1})}^2
    =\sum_{\lambda_\nu>1} \lambda_\nu^{2 p_1}
     |\udl{u}^0_\nu|^2
  \lesssim
    \sum_{k \in \N^*} \sum_{\lambda_\nu>1}
    \lambda_\nu^{2p_1} F_k(\hk \lambda_\nu)^2 |\udl{u}^0_\nu|^2\\
  & \lesssim \sum_{k \in \N^*}  \Norm{u^k}{D(\Aell^{p_1})}^2.
\end{align*}
One thus obtains with \eqref{eq: semicl obs-bis Schrodinger}
\begin{align}
  \label{eq: separation energy Schrodinger}
  \Norm{u}{D(\Aell^{p_1})}^2&\lesssim  \Norm{u^0}{D(\Aell^{p_1})}^2 + \sum_{k \in \N^*}  \Norm{u^k}{D(\Aell^{p_1})}^2\\
  &\lesssim  \Norm{u^0}{D(\Aell^{p_1})}^2 + \sum_{1 \leq |k| < k_1}   \Norm{u^k}{D(\Aell^{p_1})}^2
    + \sum_{|k|\geq k_1}\Norm{\varphi \Obs u^k(t)}{L^2(\R;K)}^2,
    \notag
\end{align}
for $k_1 \geq k_0$ to be chosen below.

With  Lemma~\ref{lemma: semicl filter Schrodinger} one has
\begin{align}
  \label{eq: semicl reduc  estimation energy Schrodinger}
   \Norm{u}{D(\Aell^{p_1})}^2
  &\lesssim  \sum_{0 \leq  |k| < k_1}  \Norm{u^k}{D(\Aell^{p_1})}^2
  +  \sum_{|k|\geq k_1}\Norm{\varphi F_k^{K}(\hk D_t) \Obs u}{L^2(\R;K)}^2.
\end{align}
Consider $\psi \in \Cinfc(]-\delta, \T+\delta[)$ such that $\psi = 1$
in a \nhd of $\ovl{I_0} =  [0,\T]$.  One writes 
\begin{align*}
  \Norm{\varphi F_k^{K}(\hk D_t) \Obs u}{L^2(\R;K)}^2
  \lesssim 
  \Norm{\varphi F_k^{K}(\hk D_t) \psi \Obs u}{L^2(\R;K)}^2
  + \Norm{\varphi F_k^{K}(\hk D_t) (1-\psi)\Obs u}{L^2(\R;K)}^2,
\end{align*}
yielding, with the second  point of lemma~\ref{lemma: semicl reduc action on
  L2loc} 
\begin{align*}
 \Norm{u}{D(\Aell^{p_1})}^2
  &\lesssim  \sum_{0 \leq  |k| < k_1}  \Norm{u^k}{D(\Aell^{p_1})}^2
    +  \Norm{\psi \Obs u}{L^2(\R;K)}^2
   + \sum_{|k|\geq k_1} 
      \Norm{\varphi F_k^{K}(\hk D_t) (1-\psi)\Obs u}{L^2(\R;K)}^2.
      \notag
\end{align*}
One writes
\begin{align*}
  \Norm{\varphi F_k^{K}(\hk D_t) (1-\psi)\Obs u}{L^2(\R;K)}
  \lesssim \Norm{\varphi F_k^{D(\Aell^{m_0})}(\hk D_t) (1-\psi) u}{L^2(\R;D(\Aell^{m_0}))},
\end{align*}
using that $\Obs$ is bounded on $D(\Aell^{m_0})$; see \eqref{eq: bound
  obs op}. This gives
\begin{align}
  \label{eq: semicl reduc  estimation energy-2b-Schrodinger}
  \Norm{u}{D(\Aell^{p_1})}^2
  &\lesssim  \sum_{0 \leq  |k| < k_1}   \Norm{u^k}{D(\Aell^{p_1})}^2
    +  \Norm{\psi \Obs u}{L^2(\R;K)}^2\\
    &\quad + \sum_{|k|\geq k_1} 
      \Norm{\varphi F_k^{D(\Aell^{m_0})}(\hk D_t) (1-\psi) u}{L^2(\R;D(\Aell^{m_0}))}^2.
      \notag
\end{align}

Second, as in Section~\ref{sec: Refined time-microlocalization estimates}
consider $\tF \in \Cinfc(\R_+^*)$ such that $\tF=1$ in a \nhd of
$\supp(F)$. With Corollary~\ref{cor: refined estimate 1-Schrodinger} and Lemma~\ref{lemma: second term time microlocalization Schrodinger} one has 
\begin{align*}
  &\Norm{\varphi F_k^{D(\Aell^{m_0})}(\hk D_t) (1-\psi) u}
  {L^2(\R;D(\Aell^{m_0}))}^2\\
  &\quad \lesssim 
  \bigNorm{\varphi F_k^{D(\Aell^{m_0})}(\hk D_t) 
        (1- \psi) \tF_k^{D(\Aell^{m_0})}(\hk D_t)   u}
  {L^2(\R; D(\Aell^{m_0})}^2\\
 &\qquad  + \bigNorm{\varphi F_k^{D(\Aell^{m_0})}(\hk D_t) 
        (1- \psi) \big(\Id -\tF_k^{D(\Aell^{m_0})}(\hk D_t)\big)   u}{L^2(\R; D(\Aell^{m_0}))}^2\\
  &\quad \lesssim \hk^M  \Norm{u}{D(\Aell^{p_1})}^2,
\end{align*}
for any $M \in \N$. From~\eqref{eq: semicl reduc  estimation
  energy-2b-Schrodinger} using that  $\hk = \rho^{-|k|}$ with $\rho>1$
one obtains
\begin{align*}
  \Norm{u}{D(\Aell^{p_1})}^2
  &\lesssim  \sum_{0 \leq  |k| < k_1}  \Norm{u^k}{D(\Aell^{p_1})}^2
    +  \Norm{\psi \Obs u}{L^2(\R;K)}^2
   + h_{k_1}^M  \Norm{u}{D(\Aell^{p_1})}^2.
\end{align*}
For $k_1\geq k_0$ chosen \suff large one obtains
\begin{align}
  \label{eq: semicl reduc  estimation energy2-2-Schrodinger}
  \Norm{u}{D(\Aell^{p_1})}^2
  &\lesssim  \sum_{0 \leq  |k| < k_1}   \Norm{u^k}{D(\Aell^{p_1})}^2
  +  \Norm{\psi \Obs u}{L^2(\R; K)}^2.
\end{align}

The following lemma is the counterpart of Lemma~\ref{lemma: invisible solution}.
%%%%%%%%%%%%%%%%%%%%%%%%
% lemma                %
%%%%%%%%%%%%%%%%%%%%%%%%
\begin{lemma}[absence of invisible solutions to the Schr\"odinger equation]
  \label{lemma: invisible solution Schrodinger}
  Let $u \in \cap_k \Con^k\big(\R; D(\Aell^{m_0-k})\big)$
be a solution to
  \eqref{eq: abstract Schrodinger equation} such that $\psi \Obs u =0$.
  Then $u=0$.
\end{lemma}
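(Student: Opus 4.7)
The plan is to mirror the proof of Lemma~\ref{lemma: invisible solution}, exploiting the fact that the Schr\"odinger equation is first order in time, which makes the eigenvector extraction more direct. For each $\delta' \in ]0,\delta]$, introduce the vector space
\begin{equation*}
\mathscr N_{\delta'} = \big\{ u \in \cap_k \Con^k\big(\R; D(\Aell^{m_0-k})\big);\ u \ \text{solves} \ \eqref{eq: abstract Schrodinger equation},\ \Obs u(t)=0 \ \text{on} \ ]-\delta',\T+\delta'[ \big\},
\end{equation*}
equipped with the norm $\Norm{\cdot}{D(\Aell^{p_1})}$ applied to $\udl{u}^0=u(0)$. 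For any $u\in \mathscr N_{\delta'}$, choosing a smooth cutoff $\psi$ with $\supp(\psi)\subset {]-\delta',\T+\delta'[}$ and $\psi\equiv 1$ near $[0,\T]$ makes $\psi\Obs u \equiv 0$, so \eqref{eq: semicl reduc  estimation energy2-2-Schrodinger} gives
\begin{equation*}
\Norm{u}{D(\Aell^{p_1})}^2 \lesssim \sum_{0\le |k|<k_1}\Norm{u^k}{D(\Aell^{p_1})}^2.
\end{equation*}
Each map $u\mapsto u^k$ has finite-dimensional range (spanned by $\{e_\nu; \nu \in J^S_k\}$ for $k\ge 1$, and by $\{e_\nu; \lambda_\nu\le 1\}$ for $k=0$), hence is compact. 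It follows that $\mathscr N_{\delta'}$ has a relatively compact unit ball and is therefore finite dimensional by Riesz's theorem.

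Next, I would show that $\mathscr N_\delta$ is stable under the action of $\d_t$. Fix $u \in \mathscr N_\delta$ and $0<\delta'<\delta$. For $0<\eps<\delta-\delta'$, the difference quotient $w_\eps(t)=(u(t+\eps)-u(t))/\eps$ lies in $\mathscr N_{\delta'}$, and by the regularity of $u$ one has $w_\eps(t)\to \d_t u(t)$ in $D(\Aell^{m_0-1})$ pointwise. Since $F_k^{D(\Aell^{p_1})}(\hk D_t)$ is a time Fourier multiplier, it commutes with translations, so $w_\eps^k \to \d_t u^k$ in $\Con^0$ of any bounded interval with values in any $D(\Aell^r)$ (all norms being equivalent on the finite-dimensional range). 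Combining this with the estimate above shows that $(w_\eps)_\eps$ is Cauchy in $\mathscr N_{\delta'}$. Completeness (automatic from finite dimensionality) yields a limit $w \in \mathscr N_{\delta'}$, and pointwise convergence forces $w=\d_t u$. Since $\delta'\in ]0,\delta[$ was arbitrary, $\d_t u\in \mathscr N_\delta$. Using $\Aell u = -i\,\d_t u$ and iterating, one simultaneously bootstraps $u\in \cap_{r,m\in\N}\Con^r\big(\R;D(\Aell^m)\big)$ and proves stability.

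Finally, since $\d_t$ acts on the finite-dimensional space $\mathscr N_\delta$, it admits an eigenvector $\mathsf v\in \mathscr N_\delta$ with eigenvalue $\mu\in\C$. The Schr\"odinger equation gives $\Aell \mathsf v(t) = -i\,\d_t \mathsf v(t) = -i\mu\, \mathsf v(t)$, so $\mathsf v(t)$ is an eigenfunction of $\Aell$ for every $t\in\R$. Since $\Obs \mathsf v(t)=0$ on $]-\delta,\T+\delta[$, Assumption~\ref{assumpt: unique continuation} forces $\mathsf v(t)=0$, and then $\mathscr N_\delta=\{0\}$, proving the lemma. The principal technical point is the verification that $\mathscr N_\delta$ is stable under $\d_t$: one must promote pointwise convergence of difference quotients to convergence in the specific norm of $\mathscr N_{\delta'}$, which is exactly where the finite-rank compactness of the maps $u\mapsto u^k$ (packaged into the a~priori estimate) does the work.
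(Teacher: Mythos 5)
Your proof is correct and follows the same strategy as the paper's, which the paper itself describes as ``very similar to that of Lemma~\ref{lemma: invisible solution}.'' The paper's written version is compressed: starting from~\eqref{eq: semicl reduc  estimation energy2-2-Schrodinger} with $\psi\Obs u=0$, it asserts directly that $\mathscr N_S=\Span\{e_\nu;\nu\in\Upsilon\}$ with $\#\Upsilon<\infty$ (from which the regularity $u\in\Con^m(\R;D(\Aell^r))$ for all $m,r\geq 0$ is immediate), then obtains $\d_t$-invariance by the one-line commutation $\psi\Obs\d_t u=\psi\d_t\Obs u=0$, and concludes with the same eigenvector-plus-unique-continuation step you use. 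You instead re-run the full argument of Lemma~\ref{lemma: invisible solution}: Riesz's theorem to get finite dimensionality of the family $\mathscr N_{\delta'}$, a difference-quotient argument (using that $F_k^{D(\Aell^{p_1})}(\hk D_t)$ commutes with translations and that all norms are equivalent on the finite-dimensional range) to obtain $\d_t$-stability, and a bootstrap via $\Aell u=-i\,\d_t u$ to obtain the needed regularity before one is allowed to write $\Obs\d_t u$. This is the more careful route, since the paper's direct claim that the estimate forces $\mathscr N_S\subset\Span\{e_\nu;\nu\in\Upsilon\}$ does not follow immediately from the inequality alone. Two small remarks: you equip $\mathscr N_{\delta'}$ with the $D(\Aell^{p_1})$-norm on $\udl{u}^0$, which is the quantity actually controlled by~\eqref{eq: semicl reduc  estimation energy2-2-Schrodinger}, whereas the paper states the norm $\Norm{\udl{u}^0}{D(\Aell^{m_0})}$; on a finite-dimensional subspace of $D(\Aell^{m_0})$ these are of course equivalent, and your choice makes the a~priori estimate step cleaner. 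Also, in applying the estimate for the smaller interval $]-\delta',\T+\delta'[$ you are tacitly (and correctly) using that its derivation only needs $\psi\in\Cinfc$ with $\psi\equiv 1$ near $\ovl{I_0}$ and $\varphi\in\Cinfc(]0,\T[)$ with $\varphi\equiv 1$ near $[\delta,\T-\delta]$, not the particular value of the endpoint $\delta$, which is the same observation made for $0<\delta\leq\delta^0$ in the wave case.
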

The proof is very similar to that of Lemma~\ref{lemma: invisible solution}.
%%%% proof of lemma
\begin{proof}
  Set $\mathscr N_S$ as the space of such invisible solutions (in the sense of the
  observation operator $\psi\Obs$) equipped with the norm $\Norm{\udl{u}^0}{D(\Aell^{m_0})}$. With \eqref{eq: semicl reduc  estimation energy2-2-Schrodinger} one has
  $\Norm{\udl{u}^0}{D(\Aell^{m_0})}\lesssim  \sum_{0 \leq  |k| < k_1}  \Norm{u^k}{D(\Aell^{p_1})}^2$ implying $\mathscr
  N_S = \Span\{ e_\nu; \nu \in \Upsilon\}$ with $\# \Upsilon < \infty$. Moreover, if $u \in \mathscr N_S$ then $u
  \in \Con^m\big(\R, D(\Aell^r)\big)$ for any $m\geq0$ and $r\geq 0$,
  similarly to what one has in \eqref{eq: regularity yk}. On this finite dimensional space one has
  $\psi \Obs \d_t u = \psi \d_t \Obs u =0$. Thus $\d_t$ maps $\mathscr N_S$ into
  itself and consequently it has an eigenvector $\mathsf v$ with
  associated eigenvalue
  $\mu$. One finds $\Aell \mathsf v =  D_t  \mathsf v = - i \mu \mathsf
  v$ meaning that $\mathsf v(t)$ is an eigenfunction for $\Aell$ for all
  $t$. With the unique continuation Assumption~\ref{assumpt: unique continuation} one obtains
  $\mathsf v(t)=0$ for all $t$. Hence $\mathscr N_S =\{0\}$.
\end{proof}
\medskip
We conclude the proof of Theorem~\ref{theorem: main result Schrodinger}
by an argument by contradiction similar to that in  the proof of
Theorem~\ref{theorem: main result - wave}. Adaptation is left to the reader.
\hfill \qedsymbol \endproof

%%%%%%%%%%%%%%%%%%
% references
%%%%%%%%%%%%%%%%%%
% \bibliographystyle{plain}
% \bibliography{./references-sc-obs}

%%%%%%%%%%%%%%%%%%
% End document
%%%%%%%%%%%%%%%%%%
\end{document}